\numberwithin{equation}{section}
\newtheoremstyle{mattthm}{}{}{\slshape}{}{\bfseries}{.}{ }{}
\theoremstyle{mattthm}
\newtheorem{theor}[equation]{Theorem}
\newtheorem{lemma}[equation]{Lemma}
\newtheorem{cor}[equation]{Corollary}
\newtheorem{propn}[equation]{Proposition}
\newtheorem{conj}[equation]{Conjecture}
\newtheoremstyle{mattdef}{}{}{}{}{\bfseries}{.}{ }{}
\theoremstyle{mattdef}
\crefname{thm}{Theorem}{Theorems}
\crefname{theor}{Theorem}{Theorems}
\crefname{propn}{Proposition}{Propositions}
\crefname{lemma}{Lemma}{Lemmas}
\crefname{cor}{Corollary}{Corollaries}
\crefname{conj}{Conjecture}{Conjectures}
\Crefname{thm}{Theorem}{Theorems}
\Crefname{theor}{Theorem}{Theorems}
\Crefname{propn}{Proposition}{Propositions}
\Crefname{lemma}{Lemma}{Lemmas}
\Crefname{cor}{Corollary}{Corollaries}
\Crefname{conj}{Conjecture}{Conjectures}
\newcommand\nequiv{\not\equiv}
\renewcommand\iff{if and only if\xspace}
\newcommand{\clam}{\begin{description}\item[\hspace{\leftmargin}Claim]}
\newcommand{\prof}{\item[\hspace{\leftmargin}Proof]}
\newcommand{\malc}{\end{description}}
\newcommand\lset[2]{\left\{\left.#1\ \right|\ \smash{#2}\right\}}
\newcommand\rset[2]{\left\{\smash{#1}\ \left|\ #2\right.\right\}}
\newcommand\df{\stackrel3{\dots.}}
\newcommand\calt{\mathcal T}
\newcommand\calc{\mathcal C}
\newcommand\caltc{\mathcal{TC}}
\newcommand\calb{\mathcal B}
\newcommand\calbc{\mathcal{BC}}
\newcommand\nchar[1]{\operatorname{char}(#1)}
\newcommand\ol\overline
\newcommand\ppmod[1]{\ (\operatorname{mod}\,#1)}
\newcommand\str[2]{\operatorname{c}_{#1}(#2)}
\newcommand\zz[2]{\operatorname{d}_{#1}(#2)}
\newcommand\dzz[2]{\operatorname{b}_{#1}(#2)}
\newcommand\lad[2]{\operatorname{lad}_{#1}(#2)}
\renewcommand\rem[2]{\operatorname{srem}_{#1}(#2)}
\newcommand\add[2]{\operatorname{sadd}_{#1}(#2)}
\newcommand\badd[2]{\operatorname{add}_{#1}(#2)}
\newcommand\brem[2]{\operatorname{rem}_{#1}(#2)}
\newcommand\bbn{\mathbb N}
\newcommand\bbc{\mathbb C}
\newcommand\bbf{\mathbb F}
\newcommand\ladd[1]{\mathcal L_{#1}}
\newcommand{\reg}{\mathrm{reg}}
\newcommand{\Ind}{\mathrm{f}}
\newcommand{\Res}{\mathrm{e}}
\newcommand{\sym}{\mathfrak S}
\newcommand{\hsym}{\hat{\mathfrak S}}
\newcommand{\alt}{\mathfrak A}
\newcommand{\halt}{\hat{\mathfrak A}}
\newcommand\hf{.5}
\newcommand\im{^{\triangledown i}}
\newcommand\ipi[1]{^{\vartriangle #1}}
\newcommand\ip{\ipi i}
\newcommand\adbl{addable\xspace}
\newcommand\rmbl{removable\xspace}
\newcommand\sadbl{strictly-addable\xspace}
\newcommand\srmbl{strictly-removable\xspace}
\newcounter{casecount}\setcounter{casecount}0
\newcounter{subcasecount}
\newcounter{subsubcasecount}
\newcommand\case{\addtocounter{casecount}1\setcounter{subcasecount}0\def\caselabel{Case \arabic{casecount}}\subsubsection*{\bf\caselabel}}
\newcommand\subcase{\addtocounter{subcasecount}1\setcounter{subsubcasecount}0\def\subcaselabel{Case \arabic{casecount}.\arabic{subcasecount}}\subsubsection*{\bf\subcaselabel}}
\newcommand\subsubcase{\addtocounter{subsubcasecount}1\def\subsubcaselabel{Case \arabic{casecount}.\arabic{subcasecount}.\arabic{subsubcasecount}}\subsubsection*{\bf\subsubcaselabel}}
\def\Md#1{\text{ }(\text{\rm mod } #1)\,}
\newcommand{\Z}{\mathbb{Z}}
\newcommand\bbz{\mathbb Z}
\newcommand{\F}{\mathbb{F}}
\newcommand{\sgn}{\mathbf{\mathrm{sgn}}}
\newcommand{\la}{\lambda}
\newcommand{\La}{\Lambda}
\newcommand{\be}{\beta}
\newcommand{\al}{\alpha}
\newcommand{\eps}{\epsilon}
\newcommand{\ga}{\gamma}
\newcommand{\de}{\delta}
\newcommand\si\sigma
\newcommand{\wrs}{\wr_{\text{s}}}
\newcommand\awrd{A\wrs\sym_d}
\newcommand{\heps}{{\hat{\eps}}}
\newcommand{\hphi}{{\hat{\phi}}}
\newcommand{\te}{\tilde{\mathrm e}}
\newcommand{\tf}{\tilde{\mathrm f}}
\newcommand{\da}{{\downarrow}}
\newcommand{\ua}{{\uparrow}}
\newcommand\dalt{\da_{\halt_n}}
\def\RP{{\mathcal{P}_0}}
\def\rpp{\mathcal{RP}_p}
\def\Par{{\mathcal{P}}}
\def\s{{\mathbf{S}}}
\def\ss{{\mathrm{S}}}
\def\st{{\mathrm{T}}}
\def\D{{\mathbf{D}}}
\def\P{{\mathbf{P}}}
\newcommand\ls{\DOTSB\leqslant}
\newcommand\gs{\DOTSB\geqslant}
\renewcommand\leq{\DOTSB\leqslant}
\renewcommand\geq{\DOTSB\geqslant}
\renewcommand\ge{\DOTSB\geqslant}
\newcommand\card[1]{|#1|}
\newcommand\ep[2]{\epsilon_{#1}(#2)}
\newcommand\hep[2]{\hat{\epsilon}_{#1}(#2)}
\newcommand\ddeg[1]{\operatorname{ddim}(#1)}
\newcommand{\doms}{\vartriangleright}
\newcommand{\domby}{\trianglelefteqslant}
\newcommand{\domsby}{\vartriangleleft}
\renewcommand\lhd\domsby
\renewcommand\rhd\doms
\newcommand\dn[2]{\hep{#1}{#2}>\ep{#1}{#2^\reg}}
\newcommand\spe[1]{\mathscr S^{#1}}
\newcommand\ospe[1]{\ol{\mathscr S}^{#1}}
\newcommand\jms[1]{\mathscr D^{#1}}
\newcommand\spc{special\xspace}
\newcommand\Spc{Special\xspace}
\newcommand\con{cited~on~p.~}
\newcommand\cons{cited~on~pp.~}
\renewcommand*{\backref}[1]{}
\renewcommand*{\backrefalt}[4]{\ifcase #1 \hspace*{\fill}{\small [no~citations.]}\or\hspace*{\fill}{\small [\con#2]}\else\hspace*{\fill}{\small [\cons#2]}\fi}
\newcommand\bkp{\backrefprint\renewcommand\con{}\renewcommand\cons{}}
\renewenvironment{proof}[1][\proofname] {\par\pushQED{\qed}\normalfont\topsep6\p@\@plus6\p@\relax\trivlist\item[\hskip\labelsep\bfseries#1\@addpunct{.}]\ignorespaces}{\popQED\endtrivlist\@endpefalse}
\begin{document}

\title[Irreducible spin representations in characteristic $3$]{On the irreducible spin representations of\\ symmetric and alternating groups which\\ remain irreducible in characteristic~$3$}

\author{Matthew Fayers}
\address{Queen Mary University of London\\Mile End Road\\London E1 4NS\\U.K.}
\email{m.fayers@qmul.ac.uk}

\author{Lucia Morotti}
\address
 {Leibniz Universit\"{a}t Hannover\\ Institut f\"{u}r Algebra, Zahlentheorie und Diskrete Mathematik\\ 30167 Hannover\\ Germany} 
 \email{morotti@math.uni-hannover.de}

\begin{abstract}
For any finite group $G$ and any prime $p$ one can ask which ordinary irreducible representations remain irreducible in characteristic $p$, or more generally, which representations remain homogeneous in characteristic $p$. In this paper we address this question when $G$ is a proper double cover of the symmetric or alternating group. We obtain a classification when $p=3$ except in the case of a certain family of partitions relating to spin RoCK blocks. Our techniques involve induction and restriction, degree calculations, decomposing projective characters and recent results of Kleshchev and Livesey on spin RoCK blocks.
\end{abstract}

\maketitle

\vspace{12pt}

\tableofcontents

\section{Introduction}

In the representation theory of finite groups it is interesting and useful to classify the ordinary irreducible representations of a group $G$ that remain irreducible modulo a prime $p$. This classification has been carried out for various families of groups, including the symmetric groups \cite{jm2,slred,mfred,mfirred}, the alternating groups \cite{F2,F3} and the finite general linear and special linear groups \cite{jmjs,kt} in non-defining characteristic, as well as the double covers of the symmetric and alternating groups when $p=2$ \cite{Fs,F}. In this paper we address the same problem for the double covers in odd characteristic, obtaining an almost-complete classification in the case $p=3$.

In fact in this setting it is more natural to classify representations which remain \emph{homogeneous} (that is, with all composition factors isomorphic) modulo $p$. To state our main theorem, let $\hsym_n$ denote a proper double cover of the symmetric group $\sym_n$, and $\halt_n$ the corresponding double cover of the alternating group $\alt_n$. Irreducible representations of these groups on which the central involution $z$ acts trivially correspond to representations of the quotient groups $\sym_n$ and $\alt_n$, so we concentrate on representations on which $z$ acts non-trivially, also called \emph{spin} representations.  The spin representations of $\bbc\hsym_n$ are modules for the twisted group algebra $\calt_n$, which is usually studied as a superalgebra, with irreducible supermodules labelled by strict partitions of $n$. We let $\s^\la_0$ be the irreducible supermodule labelled by a strict partition $\la$, and let $\s^\la_p$ be a reduction modulo $p$ of $\s^\la_0$. Our main result is the following.

\begin{theor}\label{mainthm}
If $\la$ is a strict partition and $\s^\la_3$ is homogeneous as a supermodule then one of the following holds:
\begin{enumerate}
\item\label{qs}
$\la=\nu+3\al$ where $\nu$ is a $3$-bar core and $\al$ is a partition with $l(\al)\leq l(\nu)$;
\item\label{basicsp}
$\la=(3a)$ with $a\geq 2$;
\item
$\la=\nu\sqcup(3)$ with $\nu$ a $3$-bar core;
\item\label{smallcases}
$\la$ is one of the partitions $(2,1)$, $(3,2,1)$, $(4,3,2)$, $(4,3,2,1)$, $(5,3,2,1)$, $(5,4,3,1)$, $(5,4,3,2)$, $(5,4,3,2,1)$, $(7,4,3,2,1)$, $(8,5,3,2,1)$.
\end{enumerate}
Furthermore in cases (\ref{basicsp})--(\ref{smallcases}) $\s^\la_3$ is homogeneous.
\end{theor}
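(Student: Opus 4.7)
The plan is to establish the reverse (``furthermore'') direction and the forward direction separately, with the forward direction by induction on $n=|\la|$, using modular branching for $\calt_n$ together with degree and projective-character considerations.

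For the reverse direction I would handle the three families individually. In case~(\ref{basicsp}), $\la=(3a)$ labels a basic spin supermodule whose reduction modulo~$3$ is known explicitly via the classical theory of basic spin characters, and is immediately seen to be homogeneous. In case~(3), where $\la=\nu\sqcup(3)$ and $\nu$ is a $3$-bar core, one has $\s^\nu_3=\jms\nu$ already irreducible; the bar of length~$3$ has a single residue (namely~$0$), so $\s^\la_3$ sits in a small, well-understood block and can be analyzed via branching from $\s^\nu_3$, giving homogeneity. Case~(\ref{smallcases}) is a finite check, dealt with by direct decomposition-matrix calculation for each listed partition.

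For the forward direction, suppose $\s^\la_3$ is homogeneous with composition factor $\jms\mu$, where $\mu=\la^\reg$ is the $3$-bar-regularization of $\la$ (the label of the head). For each residue $i\in\{0,1,2\}$, the $i$-restriction of $\s^\la_3$ must then also be homogeneous; on the characteristic-$0$ side this restriction is governed by the removable $i$-bars of $\la$ (specifically by $\ep{i}{\la}$ and $\hep{i}{\la}$), while on the modular side it is governed by $\ep{i}{\mu}$. Matching these counts at every residue, combined with the dimension equality $\dim\s^\la_0=k\dim\jms\mu$ for some positive integer $k$, places strong constraints on the bar-abacus of $\la$ relative to that of $\mu$. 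Iterating the restriction and applying the induction hypothesis to the (smaller) restricted module then forces $\la$ either to have the form $\nu+3\al$ with $l(\al)\le l(\nu)$, or to restrict into one of the remaining exceptional families.

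The main obstacle will be eliminating partitions of the form $\la=\nu+3\al$ with $l(\al)>l(\nu)$, which share many combinatorial features with case~(\ref{qs}) and so cannot be ruled out by simple $\ep{i}{\cdot}$-comparisons alone. For these I expect the crucial input to be Kleshchev--Livesey's explicit description of the decomposition matrices of spin RoCK blocks, which should supply a second composition factor $\jms{\mu'}\ne\jms\mu$ inside $\s^\la_3$ and thereby destroy homogeneity. A secondary obstacle is ensuring completeness of the sporadic list in~(\ref{smallcases}): the induction must be seeded with explicit decomposition-matrix calculations up to some threshold $n$, and one must argue that no further exceptional partitions exist in this range beyond those listed.
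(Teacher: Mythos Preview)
Your overall architecture is right---induction on $n$ via branching, with the inductive step using that homogeneity of $\s^\la$ forces $\la^{-i}$ to be homogeneous and $\heps_i(\la)=\eps_i(\la^\reg)$---and this is exactly how the paper proceeds. But several points in your proposal are off, and the real work of the proof is largely invisible in your sketch.

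First, two small but genuine errors. For $p=3$ the residue set is $I=\{0,1\}$, not $\{0,1,2\}$; the branching functors $\Res_i$, $\Ind_i$ are indexed by these two residues only. And the branching is governed by removable and addable $i$-\emph{nodes} (strictly-removable/addable for $\s^\la$, normal/conormal for $\D^\mu$), not $i$-bars.

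More seriously, you have misidentified both the main obstacle and the role of the Kleshchev--Livesey RoCK-block theory. The difficult partitions are \emph{not} those of the form $\nu+3\al$ with $l(\al)>l(\nu)$. Rather, the bulk of the work is the case analysis when $\la^{-0}$ or $\la^{-1}$ lands in one of the four families $H_0,\dots,H_3$; one must then reconstruct all possible $\la$ and rule each out. The tools for this are (i) a combinatorial criterion (a ladder argument) giving a sufficient condition for $\heps_i(\la)>\eps_i(\la^\reg)$, (ii) extensive bar-length-formula dimension comparisons showing $\ddeg\la>\ddeg\mu$ for well-chosen $\mu$ with $\mu^\reg=\la^\reg$, and (iii) a projective-character trick to handle a handful of partitions that escape (i) and (ii). None of these appear in your plan, and they carry most of the weight.

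The RoCK-block input is used differently from what you suggest: it does not supply decomposition numbers directly, but rather (via comparison of diagonal Cartan numbers for $\awrd$ versus the block) shows that certain \emph{special} partitions $\nu+(3^d)$ with $d\ge3$ are \emph{not} homogeneous. This is needed inside the induction because some non-special $\la$ have $\la^{-0}$ special with a rectangular $\al$, and one must know these restrictions are already inhomogeneous to close off that branch of the case analysis.
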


This theorem gives a classification of supermodules which are homogeneous modulo $3$, except for partitions appearing in case (\ref{qs}). For this case we make a precise conjecture (\cref{spcconj} below) saying exactly which $\s^\la$ are homogeneous, and we hope to resolve this in future work.

A classification of $\calt_n$-supermodules which reduce homogeneously directly leads to a classification of spin representations for $\hsym_n$ and $\halt_n$ which remain irreducible in characteristic $p$ (see \cref{irredhomog}). From our almost-complete classification for $p=3$ in \cref{mainthm}, we can deduce the following. Recall that a strict partition $\la$ is \emph{even} if it has an even number of positive even parts, and \emph{odd} otherwise. For each even strict partition $\la$ of $n$ we have an irreducible $\bbc\hsym_n$-module $\ss^\la$, and a pair of irreducible $\bbc\halt_n$-modules $\st^{\la,\pm}$. For each odd strict partition $\la$ of $n$, we have a pair of irreducible $\bbc\hsym_n$-modules $\ss^{\la,\pm}$ and an irreducible $\bbc\halt_n$-module $\st^\la$. All irreducible spin modules for $\bbc\hsym_n$ and $\bbc\halt_n$ arise in this way.

\begin{theor}\label{mainmodule}
Suppose $M$ is an irreducible spin module for $\bbc\hsym_n$ or $\bbc\halt_n$. If the reduction of $M$ modulo $3$ is irreducible, then one of the following holds:
\begin{enumerate}
\item
$M$ is one of the modules $\ss^{\la(,\pm)}$ or $\st^{\la(,\pm)}$, where $\nu$ is a $3$-bar core and $\al$ is a partition with $l(\al)\leq l(\nu)$ and $\la=\nu+3\al$;
\item\label{basicspmod}
$M=\ss^{(6a),\pm}$ or $M=\st^{(6a-3),\pm}$ for $a\gs1$;
\item
$M=\ss^{\nu\sqcup(3),\pm}$, where $\nu$ is an odd $3$-bar core;
\item
$M=\st^{\nu\sqcup(3),\pm}$, where $\nu$ is an even $3$-bar core;
\item\label{smallcasesmod}
$M$ is one of $\ss^{(2,1),\pm}$, $\st^{(2,1)}$, $\ss^{(3,2,1),\pm}$, $\st^{(4,3,2),\pm}$, $\st^{(4,3,2,1),\pm}$, $\ss^{(5,3,2,1),\pm}$, $\ss^{(5,4,3,1),\pm}$, $\st^{(5,4,3,2),\pm}$, $\st^{(5,4,3,2,1),\pm}$, $\st^{(7,4,3,2,1),\pm}$, $\st^{(8,5,3,2,1),\pm}$.
\end{enumerate}
Furthermore in cases (\ref{basicspmod}--\ref{smallcasesmod}) $M$ is irreducible modulo $3$.
\end{theor}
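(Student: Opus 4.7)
The plan is to deduce \cref{mainmodule} from \cref{mainthm} via the standard correspondence between irreducible $\calt_n$-supermodules and irreducible spin modules of $\bbc\hsym_n$ and $\bbc\halt_n$. Each such group module arises from the simple supermodule $\s^\la_0$ for a strict partition $\la$: if $\la$ is even we obtain the single $\bbc\hsym_n$-module $\ss^\la$ and a split pair $\st^{\la,\pm}$ for $\bbc\halt_n$, while if $\la$ is odd we obtain a split pair $\ss^{\la,\pm}$ and a single $\st^\la$. Invoking \cref{irredhomog}, the question of when a group module remains irreducible modulo $3$ translates (with some extra care about splitting) into the question of when the super reduction $\s^\la_3$ is homogeneous, so the strict partitions in play are exactly those listed in \cref{mainthm}.

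I would then step through the four cases of \cref{mainthm}. For $\la=\nu+3\al$ (case~(\ref{qs})), one knows that $\s^\la_3$ is itself irreducible, so every group module labelled by $\la$ stays irreducible modulo $3$, yielding case~(1) of \cref{mainmodule}. For $\la=(3a)$ with $a\geq 2$ (case~(\ref{basicsp})), since $(3a)$ is odd when $a$ is even and even when $a$ is odd, one obtains $\ss^{(6b),\pm}$ for $b\geq 1$ and $\st^{(3a),\pm}$ for odd $a\geq 3$. For $\la=\nu\sqcup(3)$ (case~(3) of \cref{mainthm}), the parity of $\la$ matches that of $\nu$, producing $\ss^{\la,\pm}$ when $\nu$ is odd and $\st^{\la,\pm}$ when $\nu$ is even; the boundary case $\nu=\emptyset$ gives $\la=(3)$ which combines with the previous case to yield case~(2) of \cref{mainmodule}. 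Case~(\ref{smallcases}) of \cref{mainthm} is a finite list, to be inspected partition by partition.

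The main obstacle is that homogeneity of $\s^\la_3$ does not automatically imply irreducibility of every associated group module: when $\s^\la_3$ is homogeneous with multiplicity greater than one, typically only one of the split/unsplit pair ($\ss^\la$ and $\st^{\la,\pm}$, or $\ss^{\la,\pm}$ and $\st^\la$) remains irreducible modulo $3$. For the eleven small partitions in case~(\ref{smallcases}), this requires a direct inspection of the relevant decomposition matrices, verifying for instance that the unsplit $\st^{(3,2,1)}$ is not irreducible modulo $3$ even though $\s^{(3,2,1)}_3$ is homogeneous; the resulting list matches case~(\ref{smallcasesmod}) of \cref{mainmodule}. The ``furthermore'' statement then follows from combining the ``furthermore'' of \cref{mainthm} with these case-by-case verifications.
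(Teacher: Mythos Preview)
Your overall strategy---deduce \cref{mainmodule} from \cref{mainthm} via \cref{irredhomog}---is exactly the paper's approach. But there is a genuine error in your handling of case~(1). You write that for special $\la=\nu+3\al$ ``one knows that $\s^\la_3$ is itself irreducible, so every group module labelled by $\la$ stays irreducible modulo $3$''. This is false: whether $\s^\la_3$ is homogeneous for special $\la$ is precisely the open problem addressed by \cref{spcconj}, and \cref{mainthm} makes no claim either way. Fortunately the claim is also unnecessary. For the forward direction of \cref{mainmodule}, if $M$ is irreducible modulo $3$ then by \cref{irredhomog} $\la$ is homogeneous, hence by \cref{mainthm} $\la$ lies in one of the four families; if $\la$ happens to be special, then $M$ is tautologically of the form listed in case~(1), and nothing more is asserted. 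The ``furthermore'' clause of \cref{mainmodule} deliberately excludes case~(1) for exactly this reason.

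A second, smaller point: you propose to resolve the ten exceptional partitions in case~(\ref{smallcases}) by consulting decomposition matrices directly. This works, but it is more labour than needed. \cref{irredhomog} already gives a uniform criterion: once $\la$ is known to be homogeneous, the irreducibility of $\ss^{\la(,\pm)}$ and $\st^{\la(,\pm)}$ is decided entirely by the parity of $\la$ together with whether $l_3(\la)=0$ or $l_3(\la)=1$. Applying this to each of the ten partitions (and to $(3a)$ and $\nu\sqcup(3)$) immediately produces the list in \cref{mainmodule}; for example, $(3,2,1)$ is odd with $l_3=1$, so $\ss^{(3,2,1),\pm}$ survives but $\st^{(3,2,1)}$ does not, while $(2,1)$ is odd with $l_3=0$, so both $\ss^{(2,1),\pm}$ and $\st^{(2,1)}$ survive. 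No decomposition matrices are required.
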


Our approach to the representation theory of $\hsym_n$ follows the Brundan--Kleshchev approach to double covers via supermodules, using in particular their branching rules for irreducible supermodules and their analogue of James's regularisation theorem. We also make use of the very recent study by Kleshchev and Livesey of RoCK blocks for double covers of symmetric groups; this leads us to an analysis of modules for a certain wreath product algebra, for which we emulate results of Chuang and Tan. Finally, we make extensive use of known decomposition numbers, which have been worked out by Maas \cite{Ma}.

It is natural to ask what the corresponding results are for $p\gs5$, and it seems likely that many of our techniques will help in finding a general classification. With this in mind, we prove our results for arbitrary odd $p$ as far as possible until \cref{secdimarg} (especially results on induction and restriction in \cref{indressec}). However, in \cref{wreathsec,proofmainthm} we will restrict attention to characteristic $3$. The main difficulty in generalising \cref{mainthm} seems to be in generalising the list of ``exceptional'' partitions appearing in (\ref{smallcases}), which appears to grow as $p$ gets larger. But with limited data available for small values of $n$, it is hard to make a conjecture.

\subsection*{Acknowledgements}

The first author was supported during this research by EPSRC Small Grant EP/W005751/1. This funding also allowed the second author to visit Queen Mary University of London, where some of this research wad carried out.

While working on the revised version the second author was working at Mathematisches Institut of the Heinrich-Heine-Universität D\"usseldorf as well as the Department of Mathematics of the University of York. While working at the University of York the second author was supported by the Royal Society grant URF$\backslash$R$\backslash$221047.

We thank Sasha Kleshchev and the referee for helpful comments and conversations.

\section{Background}

In this section we describe the background results we shall need. Throughout this paper we fix an odd prime $p$, and we will state results for general $p$ as far as is convenient. Later in the paper we will specialise to the case $p=3$.

\subsection{Partitions}


A \emph{composition} of $n$ is a sequence $\la=(\la_1,\la_2,\dots)$ of non-negative integers such that the sum $|\la|=\la_1+\la_2+\cdots$ equals $n$. A composition $\la$ is called a \emph{partition} if it is weakly decreasing. When writing partitions, we usually group together equal parts with a superscript and omit trailing zeroes. The unique partition of $0$ is written as $\varnothing$. The \emph{length} of a partition $\la$, denoted $l(\la)$, is the number of non-zero parts of $\la$. We write $\Par$ for the set of all partitions, and $\Par(n)$ for the set of partitions of $n$.

We adopt natural conventions for adding together compositions and multiplying by scalars. Suppose $\la,\mu$ are compositions and $m\in\bbn$. Then we write $\la+m\mu$ for the composition $(\la_1+m\mu_1,\la_2+m\mu_2,\dots)$. We also define $\la\sqcup\mu$ to be the partition whose parts are the combined parts of $\la$ and $\mu$, written in decreasing order.

Often we will consider partitions some of whose parts form an arithmetic progression with common difference $3$; so if $x\gs y$ with $x\equiv y\Md3$, then we write $x\df y$ to mean the sequence $x,x-3,\dots,y$. For example, we may write $(18,\ 17\df5,\ 1)$ for the partition $(18,17,14,11,8,5,1)$.

The \emph{dominance order} $\domby$ is a partial order on partitions defined by setting $\la\domby\mu$ if $|\la|=|\mu|$ and $\la_1+\dots+\la_r\ls\mu_1+\dots+\mu_r$ for all $r$.

A partition $\la$ is called \emph{strict} if $\la_r>\la_{r+1}$ for all $1\ls r<l(\la)$, or \emph{$p$-strict} if for all $r$ either $\la_r>\la_{r+1}$ or $\la_r\equiv0\Md p$. A $p$-strict partition is called \emph{restricted} if for all $r$ either $\la_r-\la_{r+1}<p$ or $\la_r-\la_{r+1}=p$ and $\la_r\nequiv0\Md p$. We write $\RP$ for the set of all strict partitions, and $\RP(n)$ for the set of strict partitions of $n$. We write $\rpp$ for the set of all restricted $p$-strict partitions, and $\rpp(n)$ for the set of restricted $p$-strict partitions of $n$. We say that a strict partition is \emph{even} is it has an even number of positive even parts, and \emph{odd} otherwise.

For any partition $\la$, we write $l_p(\la)$ for the number of positive parts of $\la$ divisible by $p$.

\subsection{Addable and removable nodes}

The Young diagram of a partition $\la$ is the set
\[
\lset{(r,c)\in\Z^2}{1\ls r,\ 1\ls c\ls \la_r},
\]
whose elements we call the \emph{nodes} of $\la$. We draw Young diagrams as arrays of boxes in the plane using the English convention.

The \emph{residue} of a node $(r,c)$ is the smaller of the residues of $c-1$ and $r-c$ modulo $p$; an \emph{$i$-node} means a node of residue $i$. We write $I=\{0,\dots,\frac12(p-1)\}$ for the set of possible residues.

If $\la$ is a $p$-strict partition, then we say that $\la$ is \emph{$p$-even} if it has an even number of nodes of non-zero residue, and \emph{$p$-odd} otherwise.

For example, suppose $p=5$ and $\la=(8,7,3)$. The residues of the nodes of $\la$ are indicated in the following diagram.
\[
\young(01210012,0121001,012)
\]
Since $\la$ has eleven nodes of non-zero residue, it is a $5$-odd partition.

If $\la$ is a $p$-strict partition, then we say that an $i$-node of $\la$ is \emph{\rmbl} if it can be removed (possibly in conjunction with other $i$-nodes) to leave a $p$-strict partition. We define \adbl nodes in a similar way.

If $\la$ is a strict partition, we say that an $i$-node of $\la$ is \emph{\srmbl} if it can be removed (possibly in conjunction with other $i$-nodes) to leave a smaller strict partition. We define \sadbl $i$-nodes of $\la$ similarly.

(In fact if $i\neq0$ the notions of \adbl and \sadbl coincide when $\la$ is strict, but for $i=0$ they are different, and the distinction will be crucial in this paper.)

The notion of \adbl and \rmbl nodes leads to the definition of normal and conormal nodes. Take $\la\in\rpp$, and let $i$ be a residue. The \emph{$i$-signature} of $\la$ is the sequence of signs obtained by reading the \adbl and \rmbl $i$-nodes of $\la$ from left to right, writing $+$ for each \adbl node and $-$ for each \rmbl node. The \emph{reduced $i$-signature} is obtained by repeatedly deleting adjacent pairs $+-$. The \rmbl nodes corresponding to the $-$ signs in the reduced $i$-signature are called the \emph{normal} $i$-nodes of $\la$, and the \adbl nodes corresponding to the $+$ signs are called the \emph{conormal} $i$-nodes of~$\la$.

If $\la$ has at least one normal $i$-node, then we define $\te_i(\la)$ to be the partition obtained by removing the rightmost normal $i$-node. If $\la$ has at least one conormal $i$-node, then we define $\tf_i(\la)$ to be the partition obtained by adding the leftmost conormal $i$-node. It is a simple exercise to check that $\te_i(\la)$ and $\tf_i(\la)$ are restricted $p$-strict partitions if they are defined, and that if $\la,\mu$ are restricted $p$-strict partitions, then $\la=\te_i(\mu)$ \iff $\mu=\tf_i(\la)$.

For example, take $p=3$, $\la=(5,4,3,2,1)$ and $i=0$. The \adbl and \rmbl $0$-nodes are indicated in the following diagram.
\[
\gyoung(;;;;;:+:+,;;;;-,;;;,;;:+,-)
\]
We see that the $0$-signature of $\la$ is $-+-++$. So the reduced $0$-signature is $-++$; the normal $0$-node is $(5,1)$, and the conormal $0$-nodes are $(1,6)$ and $(1,7)$. So $\te_0(\la)=(5,4,3,2)$ and $\tf_0(\la)=(6,4,3,2,1)$.

\subsection{Regularisation}\label{regnsec}

For each $l\gs0$, we define the $l$th \emph{ladder} to be the set of nodes
\[
\ladd l=\rset{(r,c)\in\bbn^2}{\left\lfloor\mfrac{(p-1)c}p\right\rfloor+(p-1)(r-1)=l}.
\]
For example when $p=3$, the ladders can be illustrated in the following diagram, where we label all the nodes in $\ladd l$ with $l$, for each $l$.
\[
\gyoung(0122344566^\hf^2\hdts,2344566^\hf^2\hdts,4566^\hf^2\hdts,6^\hf^2\hdts)
\]
We say that ladder $m$ is \emph{longer} than ladder $l$ when $l<m$.

The main reason for introducing ladders is to define \emph{regularisation}. Given a $p$-strict partition $\la$, we define its regularisation $\la^\reg$ by taking the Young diagram of $\la$ and moving all the nodes to the leftmost positions in their ladders. It is a simple exercise to show that the resulting diagram is the Young diagram of a restricted $p$-strict partition.

For example, if $p=3$ and $\la=(12,7,2)$, then $\la^\reg=(8,6,4,2,1)$, as we see from the following diagrams.
\[
\young(012234456678,2344566,45)\qquad\young(01223445,234456,4566,67,8)
\]

\subsection{$p$-bar cores}

Next we recall the notion of $p$-bar cores. Suppose $\la$ is a $p$-strict partition. \emph{Removing a $p$-bar} from $\la$ means either
\begin{itemize}
\item
replacing $\la_r$ with $\la_r-p$ and then re-ordering, for some $r$ for which $\la_r\gs p$ and either $p\mid\la_r$ or $\la_r-p$ is not a part of $\la$, or
\item
deleting two parts whose sum is $p$.
\end{itemize}

The $p$-bar core of $\la$ is the strict partition obtained by repeatedly removing $p$-bars until none remain. The $p$-bar-weight of $\la$ is the number of $p$-bars removed to reach the $p$-bar core. In general, we say that a strict partition is a $p$-bar core if it equals its own $p$-bar core.

For example, the $3$-bar cores are the partitions $(3l-1\df2)$ for $l\gs0$ and $(3l-2\df1)$ for $l\gs1$.

\subsection{The double cover of the symmetric group}\label{doublecover}

Now we set out the background on representation theory that we need. We work over a sufficiently large field $\F$ of characteristic not equal to $2$. 
Let $\sym_n$ denote the symmetric group on $\{1,\dots,n\}$, and $\hsym_n$ the double cover of $\sym_n$ specified in \cite{KBook}. The group algebra $\F\hsym_n$ decomposes as a direct sum of $\F\sym_n$ and a twisted group algebra $\calt_n$. For $n\gs4$ the group $\hsym_n$ is a Schur cover of $\sym_n$, and representations of $\calt_n$ are called \emph{spin} representations of $\hsym_n$.

In practice it is easier to treat $\calt_n$ as a superalgebra (that is, a $\bbz/2\bbz$-graded algebra), and consider its supermodules. Much of the representation theory of $\calt_n$ as an algebra can then be recovered by forgetting the $\bbz/2\bbz$-grading. We refer to \cite[Chapter 12]{KBook} for the essentials on superalgebras and supermodules. In particular, we note that irreducible supermodules come in two types: an irreducible supermodule $D$ is of \emph{type M} if it is irreducible as a module, and of \emph{type Q} if it is reducible as a module (in which case it is a direct sum of two irreducible modules). In any block of $\calt_n$ all the irreducible modules have the same type, and we say that the block is of type M or type Q accordingly.

Given modules or supermodules $M,N$, we write $M\sim N$ to mean that $M$ and $N$ have the same composition factors. We may extend this notation linearly, writing $M\sim N+N'$ or $M\sim aN$ for $a\gs0$.

The classification of irreducible supermodules in characteristic $0$ can be derived from the work of Schur. For each $\la\in\RP(n)$ there is an irreducible supermodule $\s^\la_0$, of type M if $\la$ is even, and of type Q if $\la$ is odd. These supermodules give all the irreducible $\calt_n$-supermodules up to isomorphism when $\bbf$ has characteristic $0$.

The classification of irreducible supermodules in odd characteristic $p$ is much more recent, and is due to Brundan and Kleshchev \cite{BK4,BK2}. (The constructions in the two papers are different but they have been shown to be equivalent in \cite{ks}.) For each $\la\in\rpp(n)$ there is an irreducible supermodule $\D^\la$, of type M if $\la$ is $p$-even, or of type Q if $\la$ is $p$-odd. We write $\P^\la$ for the projective cover of~$\D^\la$.

For $p>0$, we will write $\s^\la_p$ (or simply $\s^\la$ if $p$ is understood) for a $p$-modular reduction of $\s^\la_0$. Though $\s^\la$ is not well-defined as a supermodule, its composition factors are. If $P$ is a projective supermodule for $\hsym_n$ in characteristic $p$, then $P$ lifts to a $\bbc\hsym_n$-module $P_0$; we write $[P:\s^\la]$ for the composition multiplicity of $\s^\la_0$ in $P_0$.
    
The \emph{decomposition number problem} asks for the composition multiplicities $d_{\la\mu}=[\s^\la:\D^\mu]$ for $\la\in\RP(n)$ and $\mu\in\rpp(n)$. By Brauer reciprocity $d_{\la\mu}$ can also be recovered from the multiplicity $[\P^\mu:\s^\la]$; specifically, $d_{\la\mu}=2^{x-y}[\P^\mu:\s^\la]$, where $x=1$ if $\la$ is odd and $0$ otherwise, while $y=1$ if $\mu$ is $p$-odd and $0$ otherwise.

We will also need the classification of the blocks of $\calt_n$, which was proved by Humphreys \cite{humph}. In fact in this paper we consider superblocks; since these are almost always the same as blocks, we will abuse notation by using the term ``block'' to mean ``superblock''. Given a strict partition $\la$ and a restricted $p$-strict partition $\mu$, the supermodules $\s^\la$ and $\D^\mu$ lie in the same block of $\calt_n$ \iff $\la$ and $\mu$ have the same $p$-bar core. This automatically means that $\la$ and $\mu$ have the same $p$-bar-weight, and it customary to label a block by its bar core and bar-weight, meaning the common $p$-bar core and $p$-bar-weight of the partitions labelling modules in the block.

An alternative description of the blocks follows from a combinatorial lemma of Morris and Yaseen \cite[Theorem 5]{my}: $\s^\la$ and $\D^\mu$ lie the same block \iff $\la$ and $\mu$ have the same number of $i$-nodes, for each $i\in I$. So we may alternatively label a block by the multiset of elements of $I$ corresponding to the residues of a labelling partition. We call this multiset the \emph{content} of the block.

\subsection{Induction and restriction functors}\label{indresfunc}

We shall make extensive use of results concerning induction and restriction functors; these are due to Brundan and Kleshchev, and are explained in more detail in the survey \cite{bkdurham} and the book \cite{KBook}. From now on wee assume $\bbf$ has odd characteristic $p$.

Recall from above the notion of the \emph{content} of a block of $\calt_n$. Suppose $M$ is a $\calt_n$-module lying in a block $B$ and $i\in I$. Let $B(-i)$ be the block of $\calt_{n-1}$ whose content is obtained from the content of $B$ by removing a copy of $i$ (if such a block exists), and let $\Res_iM$ be the block component of $M\da_{\calt_n}$ lying in $B(-i)$ (if there is no block $B(-i)$, then set $\Res_iM=0$). We define induction functors $\Ind_i$ for $i\in I$ in a similar way by picking out block components of induced modules. Then
\[
M\da_{\calt_{n-1}}=\bigoplus_{i\in I}\Res_iM,\qquad M\ua^{\calt_{n+1}}=\bigoplus_{i\in I}\Ind_iM.
\]
The functors $\Res_i,\Ind_i$ are defined for all $n$, so we can consider powers $\Res_i^r,\Ind_i^r$, for $r\gs0$. For any module $M$ and any $i\in I$ there is $r\gs0$ for which $\Res_i^rM=0$, and an important part of the Brundan--Kleshchev branching rules is a determination of the minimal such $r$ in the case where $M=\s^\la$ or~$\D^\mu$.

Take $\la\in\RP(n)$ and $i\in I$. Write $\la^{-i}$ for the partition obtained by removing all the \srmbl $i$-nodes of $\la$, and let $\heps_i(\la)$ be the number of nodes removed; that is, $\heps_i(\la)=|\la|-|\la^{-i}|$. Similarly, write $\la^{+i}$ for the partition obtained by adding all the \sadbl $i$-nodes to $\la$, and let $\hphi_i(\la)$ be the number of $i$-nodes added to $\la$ to obtain $\la^{+i}$.

For example, suppose $p=3$ and $\la=(9,5,4,2)$. The \sadbl and \srmbl $0$-nodes are indicated in the following diagram.
\[
\gyoung(;;;;;;;;;-:+,;;;;;:+:+,;;;;-,;;:+,:+)
\]
We see that $\la^{-0}=(8,5,3,2)$ and $\la^{+0}=(10,7,4,3,1)$, so that $\heps_0(\la)=2$ and $\hphi_0(\la)=5$.

The following is a version of the classical branching rule for spin modules in characteristic $0$.

\begin{theor}\label{branchingrule}
Suppose $\la\in\RP(n)$ and $i\in I$.
\begin{enumerate}
\item
Let $\La^-$ be the set of strict partitions that can be obtained by removing an $i$-node from $\la$. Then
\[
\Res_i\s^\la\sim\sum_{\mu\in\La^-}a_\mu\s^\mu,
\]
where $a_\mu$ equals $2$ if $\la$ is odd and $\mu$ is even, and $1$ otherwise.
\item
Let $\La^+$ the set of strict partitions that can be obtained by adding an $i$-node to $\la$. Then
\[
\Ind_i\s^\la\sim\sum_{\mu\in\La^+}a_\mu\s^\mu,
\]
where $a_\mu$ equals $2$ if $\la$ is odd and $\mu$ is even, and $1$ otherwise.
\end{enumerate}
\end{theor}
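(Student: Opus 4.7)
The plan is to deduce the theorem from the classical Morris branching rule for ordinary spin characters, combined with the description of blocks of $\calt_n$ by content recalled just above. Working first in characteristic~$0$, Morris's branching rule gives a direct-sum decomposition
\[
\s^\la_0\da_{\calt_{n-1}}\cong\bigoplus_{\mu\in\La}a_\mu\,\s^\mu_0,
\]
where $\La$ is the set of all strict partitions obtained from $\la$ by removing a single node (of any residue) and the multiplicities $a_\mu$ are as stated in the theorem. The factor of $2$ appearing when $\la$ is odd and $\mu$ is even reflects the transition from a type-Q supermodule to a type-M supermodule: as ungraded $\hsym_n$-modules, $\s^\la_0$ is a sum of two non-isomorphic irreducibles, each of which, on restriction to $\hsym_{n-1}$, contains the irreducible ungraded module underlying $\s^\mu_0$ with multiplicity one, producing multiplicity~$2$ at the supermodule level.

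Next, I would invoke the classification of blocks of $\calt_{n-1}$ by content: the block of $\calt_{n-1}$ containing $\s^\mu_0$ is determined by the multiset of residues of $\mu$, which is obtained from the content of the block containing $\s^\la_0$ by deleting one copy of the residue of the removed node. The block-component projection $\Res_i$ therefore singles out precisely those $\mu\in\La$ for which the removed node has residue~$i$, establishing part~(1) in characteristic~$0$.

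To pass to characteristic~$p$, note that $\s^\la$ is a reduction modulo~$p$ of $\s^\la_0$, and that reduction commutes with the block-component projection $\Res_i$ because the block decomposition is governed by the content, which is a characteristic-independent invariant of a module. The relation $\Res_i\s^\la\sim\sum_{\mu\in\La^-}a_\mu\s^\mu$ at the level of composition factors then follows. Part~(2) is proved analogously from the induction form of Morris's rule, or equivalently by Frobenius reciprocity.

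The main delicacy is the bookkeeping of the multiplicities $a_\mu$: this requires translating the classical branching rule for ungraded spin characters into the supermodule setting and keeping careful track of the interplay between type~M and type~Q modules. Once this is done, the rest of the argument is a routine combination of block theory and reduction modulo~$p$.
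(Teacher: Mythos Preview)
The paper does not actually prove this theorem: it is introduced with the sentence ``The following is a version of the classical branching rule for spin modules in characteristic $0$'' and then simply stated, with no proof given. So there is nothing to compare your argument against; the paper treats the result as background.

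Your outline is a reasonable way to fill in what the paper takes for granted. Morris's branching rule (at the level of spin characters, or equivalently of supermodules over $\bbc$) gives the restriction of $\s^\la_0$ as a sum of the $\s^\mu_0$ with the stated multiplicities, and the content description of blocks recalled in \cref{doublecover} picks out exactly the summands with a removed $i$-node when one applies the projection $\Res_i$. Your account of the factor of $2$ via the type-M/type-Q distinction is the standard explanation. For the passage to characteristic~$p$, note that the functors $\Res_i$ are defined blockwise and blocks are determined by content, so reduction modulo~$p$ indeed commutes with $\Res_i$; since the relation $\sim$ only concerns composition factors, the characteristic-$p$ statement follows immediately from the characteristic-$0$ one. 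One small point: you could equally well argue that the theorem as stated (with~$\sim$) is really the characteristic-$0$ statement transported verbatim, since $\s^\mu$ by definition means a $p$-modular reduction of $\s^\mu_0$, and composition factors of a reduction are determined by the characteristic-$0$ module.
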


\begin{cor}\label{branchs}
Suppose $\la\in\RP(n)$ and $i\in I$, and let $\heps=\heps_i(\la)$ and $\hphi=\hphi_i(\la)$.
\begin{enumerate}
\item
There is $a>0$ such that
\[
\Res_i^\heps\s^\la\sim a\s^{\la^{-i}},\qquad\Res_i^r\s^\la=0\text{ for }r>\heps.
\]
\item
There is $b>0$ such that
\[
\Ind_i^\hphi\s^\la\sim b\s^{\la^{+i}},\qquad\Ind_i^r\s^\la=0\text{ for }r>\hphi.
\]
\end{enumerate}
\end{cor}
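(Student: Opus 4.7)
The plan is to deduce both parts by iterating \cref{branchingrule} and tracking how the strictly-removable (respectively strictly-addable) $i$-nodes disappear step by step. I would induct on $\heps$ for part (1) and on $\hphi$ for part (2), with the combinatorics of the sets $\La^-$, $\La^+$ doing essentially all the work.

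For part (1), the base case $\heps=0$ means $\la$ has no \srmbl $i$-nodes, so in \cref{branchingrule}(1) the set $\La^-$ is empty and $\Res_i\s^\la=0$; trivially $\Res_i^0\s^\la=\s^\la=\s^{\la^{-i}}$. For the inductive step, \cref{branchingrule}(1) gives
\[
\Res_i\s^\la\sim\sum_{\mu\in\La^-}a_\mu\s^\mu,
\]
and the key combinatorial claim is that every $\mu\in\La^-$ satisfies $\heps_i(\mu)=\heps-1$ and $\mu^{-i}=\la^{-i}$; in other words, the \srmbl $i$-nodes of $\la$ can be stripped off one at a time in any order, always reaching the same final partition $\la^{-i}$. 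Granting this, the inductive hypothesis applied to each $\s^\mu$ yields $\Res_i^{\heps-1}\s^\mu\sim a'_\mu\s^{\la^{-i}}$ with $a'_\mu>0$, and $\Res_i^r\s^\mu=0$ for $r>\heps-1$. Summing over $\mu$ gives $\Res_i^\heps\s^\la\sim a\s^{\la^{-i}}$ with $a=\sum_\mu a_\mu a'_\mu>0$. For $r>\heps$, applying one further $\Res_i$ to $a\s^{\la^{-i}}$ produces zero, because $\la^{-i}$ has no \srmbl $i$-nodes and so the base case applies.

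Part (2) is entirely symmetric: swap $\heps$ for $\hphi$, $\la^{-i}$ for $\la^{+i}$, $\Res_i$ for $\Ind_i$, and ``\srmbl'' for ``\sadbl'' throughout, using \cref{branchingrule}(2) in place of (1).

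The main obstacle is the combinatorial claim underlying the induction: that removing a single \srmbl $i$-node from $\la$ yields a strict partition with exactly one fewer such node and the same eventual $\la^{-i}$. For $i\ne 0$ this is immediate, since in strict partitions \srmbl $i$-nodes coincide with \rmbl $i$-nodes and the set of \rmbl $i$-nodes of a strict partition is plainly well-defined; removing one does not alter the others. For $i=0$ the distinction flagged in the paragraph following the definition of \srmbl becomes relevant, and one has to check directly from the residue formula (via the two types of row-end $0$-nodes) that the \srmbl $0$-nodes are ``independent'' in the same sense—neither the act of removal, nor the reordering of removals, can create or destroy a \srmbl $0$-node elsewhere.
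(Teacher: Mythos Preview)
Your approach is correct and is exactly what the paper intends: the statement is recorded as a corollary of \cref{branchingrule} with no proof given, so iterating the branching rule is the right way to fill in the details.

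Two small points to tighten. First, in your final paragraph you rephrase the key claim as ``removing a single \srmbl $i$-node from $\la$ yields a strict partition''. This is false for $i=0$: with $p=3$ and $\la=(4,3)$, the node $(1,4)$ is \srmbl (since removing $(1,4)$ and $(2,3)$ gives $(3,2)$), but removing $(1,4)$ alone gives $(3,3)$, which is not strict. What you actually need---and what you state correctly earlier---is the weaker claim that every $\mu\in\La^-$ satisfies $\mu^{-i}=\la^{-i}$ and $\heps_i(\mu)=\heps-1$. This follows cleanly once you observe that $\la^{-i}$ is the unique minimal strict partition contained in $\la$ such that $\la\setminus\la^{-i}$ consists of $i$-nodes (the intersection of two strict partitions is strict, so the minimum exists); then $\la^{-i}\subseteq\mu\subset\la$ forces $\mu^{-i}=\la^{-i}$.

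Second, for the inductive step to give $a>0$ you need $\La^-\neq\varnothing$ whenever $\heps>0$. This is easy but worth saying: among all strict partitions $\nu$ with $\la^{-i}\subseteq\nu\subsetneq\la$, a maximal one must satisfy $|\la\setminus\nu|=1$ (take the topmost row $r$ where $\nu_r<\la_r$ and add the node $(r,\nu_r+1)$; strictness of $\la$ guarantees the result is still strict, contradicting maximality unless only one node was missing).
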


For the irreducible modules $\D^\mu$ we can make a similar statement to \cref{branchs}. Given a restricted $p$-strict partition $\mu$, let $\eps_i(\mu)$ be the number of normal $i$-nodes of $\mu$, and let $\mu\im$ be the partition obtained by removing all the normal $i$-nodes from $\mu$ (or in other words, $\mu\im=\te_i^{\eps_i(\mu)}(\mu)$). Similarly, let $\phi_i(\mu)$ be the number of conormal $i$-nodes of $\mu$, and let $\mu\ip=\tf_i^{\phi_i(\mu)}(\mu)$ be the partition obtained by adding all the conormal $i$-nodes.

\begin{lemma}\label{branchd}
Suppose $\mu\in\rpp(n)$ and $i\in I$, and let $\eps=\eps_i(\mu)$ and $\phi=\phi_i(\mu)$.
\begin{enumerate}
\item
There is $a>0$ such that
\[
\Res_i^\eps\D^\mu\sim a\D^{\mu\im},\qquad\Res_i^r\D^\mu=0\text{ for }r>\eps.
\]
\item
There is $b>0$ such that
\[
\Ind_i^\phi\D^\mu\sim b\D^{\mu\ip},\qquad\Ind_i^r\D^\mu=0\text{ for }r>\phi.
\]
\end{enumerate}
\end{lemma}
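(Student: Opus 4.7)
The plan is to establish both parts by induction on $\eps=\eps_i(\mu)$ (respectively on $\phi=\phi_i(\mu)$), taking as input the one-step branching rule for irreducible spin supermodules from the Brundan--Kleshchev categorification \cite{BK4,BK2,KBook}. That input reads: every composition factor $\D^\nu$ of $\Res_i\D^\mu$ satisfies $\eps_i(\nu)\leq\eps-1$, with equality forcing $\nu=\te_i(\mu)$; and $\D^{\te_i(\mu)}$ actually occurs as a composition factor whenever $\eps>0$. An analogous statement holds with $\Ind_i$, $\tf_i$ and $\phi_i$ in place of $\Res_i$, $\te_i$ and $\eps_i$.

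Given this input, the base case $\eps=0$ is immediate: any composition factor $\D^\nu$ of $\Res_i\D^\mu$ would need $\eps_i(\nu)\leq -1$, which is absurd, so $\Res_i\D^\mu=0$; taking $a=1$ then gives $\Res_i^0\D^\mu=\D^\mu=\D^{\mu\im}$. For the inductive step, I would write
\[
\Res_i\D^\mu\sim c\,\D^{\te_i(\mu)}+\sum_\nu m_\nu\,\D^\nu
\]
with $c>0$ and each remaining $\nu$ having $\eps_i(\nu)\leq\eps-2$, and then apply $\Res_i^{\eps-1}$ to both sides. The inductive hypothesis applied to $\te_i(\mu)$ (which has $\eps_i$-value $\eps-1$) produces $\Res_i^{\eps-1}\D^{\te_i(\mu)}\sim c'\D^{(\te_i\mu)\im}$ for some $c'>0$, while it annihilates every remaining $\D^\nu$. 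The combinatorial identity $(\te_i\mu)\im=\mu\im$ — which holds because iteratively removing the rightmost normal $i$-node $\eps$ times is, by definition, exactly removing all normal $i$-nodes at once — then gives $\Res_i^\eps\D^\mu\sim a\,\D^{\mu\im}$ with $a=cc'>0$. The vanishing $\Res_i^{\eps+1}\D^\mu=0$ follows by one further application of $\Res_i$ and the base case, since $\eps_i(\mu\im)=0$. Part~(ii) is completely symmetric, interchanging $\Res_i\leftrightarrow\Ind_i$, $\te_i\leftrightarrow\tf_i$, $\eps_i\leftrightarrow\phi_i$ and $\mu\im\leftrightarrow\mu\ip$ throughout.

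The main difficulty is really the one-step rule quoted at the start: it is not proved in the excerpt and is the substantive categorical content of the Brundan--Kleshchev theory for spin representations, arising from their realisation of $\Res_i$ and $\Ind_i$ as divided powers of crystal operators on the Fock space. Once that rule is granted, the present lemma is just its standard ``divided-power'' repackaging and the induction is routine; in particular no characteristic-specific input is needed here beyond the definitions of $\te_i$, $\tf_i$, $\eps_i$, $\phi_i$ already set up in the excerpt.
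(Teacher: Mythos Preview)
The paper does not actually give a proof of this lemma: it is stated as a known result from the Brundan--Kleshchev theory, with the surrounding text pointing to \cite{bkdurham} and \cite{KBook} for details. Your argument is correct and is precisely the standard way one derives this ``divided-power'' statement from the one-step branching rule (the modular analogue of Kleshchev's branching theorems, as in \cite[Chapter 22]{KBook}); you have also correctly identified that the real content lies in that one-step rule, which is cited rather than proved here. So there is nothing to compare at the level of argument --- the paper simply quotes the result, while you supply the routine reduction to the more primitive input.
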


\subsection{Jucys--Murphy elements and weight spaces}

Later we shall need to make use of weight spaces; we briefly explain the background, following \cite[Section 22]{KBook}. 

There are elements $y_1,\dots,y_{n}\in\calt_n$ (defined originally by Sergeev) which are analogues of the Jucys--Murphy elements in $\F\sym_n$; in particular, the squares $y_1^2,\dots,y_{n}^2$ generate a large commutative subalgebra of $\calt_n$. Given a $\calt_n$-supermodule $M$ and a tuple $i=(i_1,\dots,i_n)\in I^n$, we define the \emph{$i$-weight space}
\[
M[i]=\lset{m\in M}{(y_r^2-\tfrac12i_r(i_r+1))^Nm=0\text{ for $N\gg0$ and $1\ls r\ls n$}}.
\]
Then $M$ is the direct sum of the weight spaces $M[i]$. We say that $i$ is a \emph{weight} of $M$ if $M[i]\neq0$.

In the case where $M=\s^\la$ for $\la\in\RP(n)$ we can describe the weights explicitly. To explain this, we need to introduce \emph{standard shifted tableaux}. A standard shifted $\la$-tableau is a bijection $t$ from the Young diagram of $\la$ to the set $\{1,\dots,n\}$ such that
\[
t(r,c)<t(r,c+1),\qquad t(r,c+1)<t(r+1,c)
\]
for all admissible $r,c$.

Given a shifted tableau $t$, we define $i_k$ to be the residue of the node $t^{-1}(k)$ for each $k$, and then define the tuple $i^t=(i_1,\dots,i_n)\in I^n$. Now the following result follows from the branching rules for the modules $\s^\la$.

\begin{propn}\label{tabweight}
Suppose $\la\in\RP(n)$ and $t$ is a standard shifted $\la$-tableau. Then $i^t$ is a weight of~$\s^\la$.
\end{propn}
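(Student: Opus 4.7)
I would argue by induction on $n$, using \cref{branchingrule} to transfer the non-vanishing of weight spaces through restriction. The base case $n=0$ is immediate. For $n\gs 1$, given a standard shifted $\la$-tableau $t$, let $A = t^{-1}(n)$ be the node carrying the largest entry and $i_n$ its residue. The tableau inequalities force $A$ to sit at a \srmbl corner of $\la$: entries strictly increase along rows, and $t(r,c+1)<t(r+1,c)$ together with strictness of $\la$ rules out the awkward case $\la_{r+1} = \la_r - 1$. Hence $\mu := \la\setminus\{A\}$ lies in $\RP(n-1)$, and restricting $t$ to $\mu$ yields a standard shifted $\mu$-tableau $t'$ with residue sequence $i^{t'} = (i_1,\ldots,i_{n-1})$. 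By the inductive hypothesis, $\s^\mu[i^{t'}] \ne 0$.

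By \cref{branchingrule}, $\s^\mu$ appears among the composition factors of $\Res_{i_n}\s^\la$. Working in characteristic zero so that $\calt_{n-1}$ is semisimple, $\s^\mu$ embeds as a direct summand of $\s^\la\da_{\calt_{n-1}}$. Since $y_1^2,\ldots,y_{n-1}^2$ all lie in $\calt_{n-1}$, this inclusion preserves $i^{t'}$-weight spaces, and decomposing further under $y_n^2$ gives
\[
0 \neq \s^\mu[i^{t'}] \subseteq (\s^\la\da_{\calt_{n-1}})[i^{t'}] = \bigoplus_{j\in I}\s^\la[i^{t'}, j].
\]

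To isolate the summand with $j = i_n$, I invoke the block description recalled in \cref{doublecover}: every weight of a $\calt_n$-module has residue multiset equal to the content of the enclosing block. Since the multiset of $i^t = (i^{t'}, i_n)$ is the content of the block containing $\la$, the multiset of $i^{t'}$ equals that content with one copy of $i_n$ removed, and no other $j$ can contribute to the direct sum above. Thus it collapses to $\s^\la[i^t]$, which is therefore non-zero, completing the induction. The corresponding statement in odd characteristic $p$ follows by reducing an integral form of $\s^\la_0$ modulo $p$, which preserves weight-space dimensions since the eigenvalues $\tfrac12 i_r(i_r+1)$ are integers.

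\textbf{Main obstacle.} The crucial step is matching residue multisets against block contents to extract the single summand with $j = i_n$; without this block-theoretic input one would be left with a direct sum over all $j\in I$. The rest of the argument is routine induction plus characteristic-zero semisimplicity, together with a short verification that $A$ is \srmbl and that removing it preserves strictness.
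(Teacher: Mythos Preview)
Your argument is correct and follows the same inductive skeleton as the paper: remove the node carrying the entry $n$, observe that the remaining tableau is standard shifted for a smaller strict partition, and use the branching rule together with the inductive hypothesis.

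The only substantive difference is in how the key implication ``$(i_1,\dots,i_{n-1})$ is a weight of $\s^{\la^-}$ appearing in $\Res_{i_n}\s^\la$ $\Longrightarrow$ $(i_1,\dots,i_{n-1},i_n)$ is a weight of $\s^\la$'' is justified. The paper simply cites \cite[Lemma~22.3.14]{KBook}, which handles this in arbitrary characteristic in one stroke. You instead unpack this lemma by hand: work in characteristic~$0$ so that $\s^\mu$ sits as a direct summand of $\s^\la\da$, then use the block-content description to pin down the $y_n^2$-eigenvalue, and finally pass to characteristic~$p$ by reducing an integral form. This is a legitimate and more self-contained route, at the cost of the extra bookkeeping (and your final sentence on mod-$p$ reduction, while correct since the eigenvalues $\tfrac12 i(i+1)$ for $i\in I$ remain pairwise distinct modulo~$p$, is terse enough that a reader might want a line of justification). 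The paper's approach is shorter because it outsources exactly this step; yours makes explicit what that lemma is doing.
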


\begin{proof}
We use induction on $n$, with the case $n=0$ being trivial. Assuming $n\gs1$, let $(r,c)$ be the node for which $t(r,c)=n$. Let $\la^-$ be the partition obtained from $\la$ by removing the node $(r,c)$. Then the definition of a standard shifted tableau means that $\la^-\in\RP(n-1)$ and that the tableau $t^-$ obtained by restricting $t$ to $\la^-$ is a standard shifted $\la^-$-tableau. Let $i$ be the residue of $(r,c)$. Then the branching rule means that $\s^{\la^-}$ appears in $\Res_i\s^\la$, so by \cite[Lemma 22.3.14]{KBook} if $(i_1,\dots,i_{n-1})$ is a weight of $\s^{\la^-}$ then $(i_1,\dots,i_{n-1},i)$ is a weight of $\s^\la$. By induction $i^{t-}$ is a weight of $\s^{\la^-}$, and so $i^t$ is a weight of~$\s^\la$.
\end{proof}

\section{Irreducible and homogeneous modules}

Our motivation in this paper is to classify the irreducible representations of $\hsym_n$ in characteristic $0$ which remain irreducible when reduced modulo $p$. In fact the answer to this question depends on whether we consider modules or supermodules, but we address both questions (as well as the corresponding question for the double cover of the alternating group) by considering the more general question of homogeneous modules.

We say that a strict partition $\la$ is \emph{homogeneous} if the composition factors of $\s^\la$ (as a supermodule) are all isomorphic. The following \lcnamecref{L300921} due to Brundan and Kleshchev is a very useful tool for classifying homogeneous partitions. Recall that $\la^\reg$ denotes the regularisation of $\la$, and $l_p(\la)$ the number of positive parts of $\la$ divisible by $p$.

\begin{theor}\label{L300921}
Suppose $\la\in\RP$. Then
\begin{align*}
[\s^\la:\D^{\la^\reg}]&=2^{\frac{l_p(\la)+x-y}{2}},
\\
[\P^{\la^\reg}:\s^\la]&=2^{\frac{l_p(\la)+y-x}{2}},
\end{align*}
where $x$ is $0$ if $\la$ is even and $1$ if $\la$ is odd, and $y$ is $0$ if $\la$ is $p$-even and $1$ if $\la$ is $p$-odd.  All other composition factors of $\s^\la$ are of the form $\D^\mu$ with $\mu\lhd\la^\reg$.
\end{theor}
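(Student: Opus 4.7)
The plan is an induction on $n=|\la|$, with the base case $n=0$ being trivial since $\varnothing^\reg=\varnothing$ and all invariants vanish. First I would fix a residue $i$ corresponding to the longest ladder of $\la$ (equivalently of $\la^\reg$) that contains nodes. Because regularisation pushes nodes to leftmost positions in their ladders and the longest ladder is strictly longer than any other meeting $\la$, the nodes of this ladder sit at the bottom of $\la^\reg$, and I would establish the two key combinatorial compatibilities
\[
\heps_i(\la)=\eps_i(\la^\reg)\qquad\text{and}\qquad(\la^{-i})^\reg=(\la^\reg)\im.
\]
Both follow from a direct unpacking of the ladder-regularisation procedure together with the definition of the $i$-signature: the strictly-removable $i$-nodes of $\la$ are precisely the nodes of its longest ladder, and these regularise to exactly the normal $i$-nodes of $\la^\reg$.

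Next I would apply \cref{branchs}(1) and \cref{branchd}(1) with the common integer $\heps=\eps_i(\la^\reg)=\eps$, giving $\Res_i^{\heps}\s^\la\sim a\s^{\la^{-i}}$ and $\Res_i^{\eps}\D^{\la^\reg}\sim b\D^{(\la^\reg)\im}$ for explicit powers of $2$ $a,b$ determined by the M/Q types of the four supermodules involved. Combined with the inductive hypothesis applied to $\la^{-i}$, exactness of $\Res_i^{\heps}$ yields
\[
[\s^\la:\D^{\la^\reg}]=\frac{a}{b}\,[\s^{\la^{-i}}:\D^{(\la^{-i})^\reg}].
\]
The main arithmetic verification is then that $\log_2(a/b)$ together with the change in $l_p$, $x$, $y$ under $\la\mapsto\la^{-i}$ combine to give exactly the expected drop in the exponent $(l_p(\la)+x-y)/2$. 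I expect this to be the hardest step, since the parity changes $x\to x'$ and $y\to y'$, the variation of $l_p$ (which jumps precisely when $i=0$ and the removed nodes finish off a part divisible by $p$), and the M/Q-dependent multiplicities interact delicately; indeed the factor of one-half in the exponent is present essentially because of these type changes. The dual formula for $[\P^{\la^\reg}:\s^\la]$ then follows immediately from the Brauer-reciprocity identity $d_{\la\mu}=2^{x-y}[\P^\mu:\s^\la]$ recalled in the excerpt, which just swaps the roles of $x$ and $y$.

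Finally, for the statement that every other composition factor $\D^\mu$ of $\s^\la$ satisfies $\mu\lhd\la^\reg$, I would argue inductively. Any such $\mu$ has $\eps_i(\mu)\leq\heps$, for otherwise $\Res_i^{\heps+1}\s^\la\neq0$, contradicting \cref{branchs}(1); and if $\eps_i(\mu)=\heps$ then $\D^{\mu\im}$ is a composition factor of $\s^{\la^{-i}}$, so by induction $\mu\im\domby(\la^{-i})^\reg=(\la^\reg)\im$. A standard monotonicity argument for the crystal operator $\tf_i^{\heps}$ with respect to the dominance order then upgrades this to $\mu\domby\la^\reg$, with equality only when $\mu=\la^\reg$. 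The remaining case $\eps_i(\mu)<\heps$ is handled by the same argument applied to $\Res_i^r\s^\la$ for smaller $r$, together with the observation that such $\mu$ lie on strictly shorter ladders and so are already strictly below $\la^\reg$ in dominance. Apart from the $2$-power bookkeeping in the main recursion, the rest is a clean iteration of the Brundan--Kleshchev branching machinery.
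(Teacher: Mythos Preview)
Your approach is genuinely different from the paper's: the paper simply cites the corresponding result for the supergroup $Q(n)$ proved by Brundan--Kleshchev \cite[Theorem 1.2]{BK3}, and transfers it to $\calt_n$ via Sergeev duality \cite[Theorem 10.8]{BK2}. No inductive argument on $n$ is given in this paper; the theorem is imported wholesale.

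Your proposed induction via branching is a plausible strategy in spirit, but as written it has real gaps. First, your key combinatorial claim is false: the strictly-removable $i$-nodes of $\la$ are \emph{not} in general precisely the nodes of the longest ladder. For $p=3$ and $\la=(8,2)$ the longest ladder is $\ladd5$ containing only $(1,8)$, yet $(2,2)$ (in $\ladd3$) is also a strictly-removable $1$-node. The identities $\heps_i(\la)=\eps_i(\la^\reg)$ and $(\la^{-i})^\reg=(\la^\reg)\im$ may still hold for this choice of $i$, but they require a genuine argument, not the one you gave. (Note that \cref{T300921} in the paper establishes these identities only under the assumption that $\la$ is homogeneous, and its proof \emph{uses} the theorem you are trying to prove.)

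Second, and more seriously, your recursion and your dominance argument are circular. The equation $[\s^\la:\D^{\la^\reg}]=(a/b)[\s^{\la^{-i}}:\D^{(\la^{-i})^\reg}]$ only gives an inequality $\geq$ a priori: you need to rule out the possibility that some other composition factor $\D^\mu$ of $\s^\la$ with $\eps_i(\mu)=\heps$ satisfies $\mu\im=(\la^{-i})^\reg$, and that depends on already knowing the dominance statement. Conversely, your dominance argument invokes a ``standard monotonicity'' of $\tf_i^{\heps}$ with respect to $\domby$, which is not a standard fact for the type $A_{2\ell}^{(2)}$ crystal and is in any case not proved here; and the treatment of the case $\eps_i(\mu)<\heps$ (``such $\mu$ lie on strictly shorter ladders'') does not make sense as stated, since $\Res_i^r\s^\la$ for $r<\heps$ is a sum over several $\s^\nu$ rather than a single one. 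To make this route work you would essentially have to redo the analysis in \cite{BK3}.
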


\begin{proof}
The corresponding result for the algebraic supergroup $Q(n)$ is proved in \cite[Theorem 1.2]{BK3}, and the result for $\calt_n$ follows by \cite[Theorem 10.8]{BK2}.
\end{proof}

Suppose $\la$ is a strict partition. If $\la$ is even, then $\s^\la_0$ is irreducible as a module, and we write this module as $\ss^\la_0$. If $\la$ is odd, then as a module $\s^\la_0$ splits as the direct sum of two non-isomorphic irreducible modules which we write as $\ss^{\la,+}_0$ and $\ss^{\la,-}_0$. These modules give all the irreducible spin representations of $\hsym_n$ in characteristic $0$. We also consider the double cover $\halt_n$ of the alternating group: if $\la$ is even, then $\ss^\la_0\dalt$ splits as the direct sum of two non-isomorphic irreducible modules which we write as $\st^{\la,+}_0$ and $\st^{\la,-}_0$, while if $\la$ is odd, then $\ss^{\la,+}_0\dalt\cong\ss^{\la,-}_0\dalt$ is an irreducible module which we write as $\st^\la_0$. These modules give all the irreducible spin representations of $\halt_n$ in characteristic $0$. We write $\ss^\la_p$ for a $p$-modular reduction of $\ss^\la_0$, and similarly for the other modules just defined (though we may simply write $\ss^\la$ if $p$ is understood).

A classification of homogeneous partitions also directly allows us to classify which modules $S^{\la(,\pm)}_p$ and $T^{\la(,\pm)}_p$ are \emph{almost homogeneous} (by which we mean that all composition factors of $S_p^{\la(,\pm)}$ or $T_p^{\la(,\pm)}$ are indexed by the same partition), as $S_p^{\la(,\pm)}$ has a composition factor indexed by $\mu$ if and only if $\s_p^\la$ has a composition factor $\D_p^\mu$ and similarly for $T_p^{\la(,\pm)}$.

\cref{L300921} tells us that if $\la$ is homogeneous then all the composition factors of $\s_p^\la$ are isomorphic to $\D^{\la^\reg}$, and gives the number of composition factors. So a classification of homogeneous partitions also allows us to answer the question of irreducibility in different settings, as follows.

\begin{propn}\label{irredhomog}
Suppose $\la\in\RP$.
\begin{enumerate}
\item
\begin{enumerate}
\item
If $\la$ is even, then $\s^\la_p$ is an irreducible supermodule \iff $\la$ is homogeneous and $l_p(\la)\ls1$.
\item
If $\la$ is odd, then $\s^\la_p$ is an irreducible supermodule \iff $\la$ is homogeneous and $l_p(\la)=0$.
\end{enumerate}
\item\label{symirred}
\begin{enumerate}[ref=\alph*]
\item\label{symirredeven}
If $\la$ is even, then $\ss^\la_p$ is irreducible \iff $\la$ is homogeneous and $l_p(\la)=0$.
\item\label{symirredodd}
If $\la$ is odd, then $\ss^{\la,\pm}_p$ is irreducible \iff $\la$ is homogeneous and $l_p(\la)\ls1$.
\end{enumerate}
\item\label{altirred}
\begin{enumerate}
\item
If $\la$ is even, then $\st^{\la,\pm}_p$ is irreducible \iff $\la$ is homogeneous and $l_p(\la)\ls1$.
\item
If $\la$ is odd, then $\st^\la_p$ is irreducible \iff $\la$ is homogeneous and $l_p(\la)=0$.
\end{enumerate}
\end{enumerate}
\end{propn}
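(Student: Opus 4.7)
My plan is to derive all three parts from \cref{L300921} together with two standard translation tools: (a) for each restricted $p$-strict $\mu$, the supermodule $\D^\mu$ viewed as a $\bbf\hsym_n$-module equals $\jms^\mu$ (if $\mu$ is $p$-even) or $\jms^{\mu,+}\oplus\jms^{\mu,-}$ (if $\mu$ is $p$-odd); and (b) a corresponding restriction rule from $\bbf\hsym_n$-modules to $\bbf\halt_n$-modules, analogous to the characteristic-$0$ rule recalled just before the proposition, from which it follows that $\D^\mu$ has $\halt_n$-composition length exactly~$2$ in every case.

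First, I will observe that homogeneity of $\la$ is necessary in every subcase: otherwise \cref{L300921} supplies a composition factor $\D^\mu$ of $\s^\la_p$ with $\mu\neq\la^\reg$, whose associated $\bbf\hsym_n$- and $\bbf\halt_n$-constituents are disjoint from those arising from $\D^{\la^\reg}$. Assuming henceforth that $\la$ is homogeneous, \cref{L300921} gives $\s^\la_p\sim m\D^{\la^\reg}$ with $m=2^{(l_p(\la)+x-y)/2}$. I will verify that regularisation preserves $p$-parity (each ladder consists of nodes of a single residue, so $\la^\reg$ is $p$-even \iff $y=0$) and set $r=1+y$: the $\bbf\hsym_n$-composition length of $\s^\la_p$ is then $mr$, and its $\bbf\halt_n$-composition length is $2m$.

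Each part is then essentially a case split. Part (1): $\s^\la_p$ is irreducible as a supermodule \iff $m=1$, i.e.\ $l_p(\la)=y-x$, which gives $l_p\leq 1$ when $\la$ is even and $l_p=0$ when $\la$ is odd. For part (2): when $\la$ is even, $\ss^\la_p=\s^\la_p$ as a module, so irreducibility requires $mr=1$, forcing $m=r=1$ and hence $l_p=0$; when $\la$ is odd, the two summands $\ss^{\la,\pm}_p$ of $\s^\la_p$ have equal composition length $mr/2$ by symmetry, and irreducibility requires $mr=2$, i.e.\ $(m,r)\in\{(1,2),(2,1)\}$, equivalently $l_p\in\{0,1\}$.

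Part (3) follows from the same arithmetic using the $\halt_n$-length $2m$: when $\la$ is even the summands $\st^{\la,\pm}_p$ of $\ss^\la_p\dalt$ each have length $m$ by symmetry; when $\la$ is odd, both $\ss^{\la,\pm}_p\dalt$ are isomorphic to $\st^\la_p$, giving $\st^\la_p$ also length $m$. In each case, irreducibility therefore reduces to $m=1$, and the resulting conditions on $l_p$ coincide with those of part (1). The main obstacle to implementing this plan is justifying the restriction rule~(b) in modular characteristic—in particular the uniform value~$2$ for the $\halt_n$-composition length of $\D^\mu$—after which each subcase reduces to the same small arithmetic comparison using \cref{L300921}.
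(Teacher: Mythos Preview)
Your proposal is correct and follows essentially the same route as the paper: both arguments rest on \cref{L300921}, the type-M/type-Q dichotomy for $\D^{\la^\reg}$ (your parameter $r=1+y$), and the observation that $\D^\mu\dalt$ always has composition length~$2$, together with the sign/conjugation symmetry relating the two members of each $\pm$ pair. The paper treats the six subcases individually, whereas you package everything into the parameters $m$ and $r$ and reduce each subcase to a one-line arithmetic check; this is a cleaner presentation but not a different argument. Your explicit remark that regularisation preserves $p$-parity (because ladders consist of nodes of a single residue) is something the paper uses implicitly in part~(2)(a), and the ``main obstacle'' you flag---the modular Clifford-theory fact that $\underline{\D^\mu}\dalt$ has length~$2$---is exactly what the paper establishes at the start of its proof of part~(3).
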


\begin{proof}
In this proof, for any supermodule $M$ we write $\underline M$ for the underlying module.
\begin{enumerate}
\item
This follows immediately from \cref{L300921}.
\item
\begin{enumerate}
\item
In this case $\ss^\la=\underline{\s^\la}$, so if $\ss^\la$ is irreducible then certainly $\s^\la$ is. So assume $\la$ is homogeneous and $l_p(\la)\ls1$. Then $\s^\la\cong\D^{\la^\reg}$, by \cref{L300921}. Now $\ss^\la\sim\underline{\D^{\la^\reg}}$, which is irreducible \iff $\la^\reg$ is $p$-even. By \cref{L300921}, this happens \iff $l_p(\la)=0$.
\item
First note that $\ss^{\la,+}$ is irreducible \iff $\ss^{\la,-}$ is, since these modules are images of each other under tensoring with the sign module. Now $\ss^{\la,+}\oplus\ss^{\la,-}\sim\underline{\s^\la}$, so if $\ss^{\la,\pm}$ is irreducible then certainly $[\s^\la:\D^{\la^\reg}]\ls2$.

If $[\s^\la:\D^{\la^\reg}]=1$, then by \cref{L300921} $l_p(\la)=0$ and $\la$ is $p$-odd, so $\underline{\D^{\la^\reg}}$ has two composition factors. So $\ss^{\la,\pm}$ is irreducible \iff $\s^\la$ has no other composition factors, i.e.\ $\la$ is homogeneous.

If $[\s^\la:\D^{\la^\reg}]=2$, then by \cref{L300921} either $l_p(\la)=1$ and $\la$ is $p$-even, or $l_p(\la)=2$ and $\la$ is $p$-odd. But in the second case $\underline{\s^\la}$ would have at least four composition factors, a contradiction. So $\ss^{\la,\pm}$ can be irreducible only in the first case, with $\la$ being homogeneous.
\end{enumerate}
\item
We begin by observing that if $\mu$ is a restricted $p$-strict partition then $\underline{\D^\mu}\dalt$ has composition length~$2$: if $\mu$ is $p$-even then $\underline{\D^\mu}$ is irreducible and therefore invariant under tensoring with the sign module, so its restriction to $\halt_n$ splits as a direct sum of two modules. If $\mu$ is $p$-odd, then $\underline{\D^\mu}$ is a direct sum of two non-isomorphic modules; these modules are images of each other under tensoring with the sign module, so remain irreducible on restriction to $\halt_n$.
\begin{enumerate}
\item
As in \ref{symirred}(\ref{symirredodd}), $\st^{\la,+}$ is irreducible \iff $\st^{\la,-}$ is. Now $\st^{\la,+}\oplus\st^{\la,-}\sim\underline{\s^\la}\dalt$.  So (from the observation above) $\st^{\la,\pm}$ is irreducible \iff $\s^\la$ is an irreducible supermodule, i.e.\ \iff $l_p(\la)\ls1$.
\item
In this case $\underline{\s^\la}\dalt\sim\ss^{\la,+}\dalt\oplus\ss^{\la,-}\dalt\sim2\st^\la$. So $\st^\la$ is irreducible \iff $\underline{\s^\la}\dalt$ has exactly two composition factors, which happens \iff $\s^\la$ is irreducible.\qedhere
\end{enumerate}
\end{enumerate}
\end{proof}

With \cref{irredhomog} in mind, we focus on homogeneous modules for the rest of the paper.

To begin with, we will need some simple known results on decomposition numbers due to Wales and M\"uller. For simplicity we state M\"uller's result only in the case $p=3$.

\begin{theor}\cite[Tables III and IV]{Wales}\label{walthm}
\begin{enumerate}
\item
Suppose $\la=(n)$. Then $\la$ is homogeneous.
\item
Suppose $\la=(n-1,1)$ with $n\gs5$. Then $\la$ is homogeneous \iff $n\nequiv0,1\ppmod p$.
\end{enumerate}
\end{theor}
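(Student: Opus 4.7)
The plan is to combine \cref{L300921} with the Brundan--Kleshchev branching rules (\cref{branchingrule,branchd}). \cref{L300921} says that every composition factor of $\s^\la$ is either $\D^{\la^\reg}$ (with a known power-of-$2$ multiplicity) or $\D^\mu$ with $\mu\lhd\la^\reg$, so homogeneity of $\la$ is equivalent to ruling out every such proper dominance predecessor.

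For part (1), I would first identify $(n)^\reg$ by a direct ladder computation: since $(n)$ has length one, $(n)^\reg$ is a staircase-type restricted $p$-strict partition whose precise shape depends on $n\ppmod p$. The argument then proceeds by induction on $n$. By \cref{branchingrule}, $\Res_i\s^{(n)}\sim a\s^{(n-1)}$ where $i$ is the residue of the (unique) removable node of $(n)$, and the inductive hypothesis says $\s^{(n-1)}$ is homogeneous with every composition factor isomorphic to $\D^{(n-1)^\reg}$. Applying \cref{branchd} in reverse, any composition factor $\D^\mu$ of $\s^{(n)}$ must have $\Res_i\D^\mu$ supported on $\D^{(n-1)^\reg}$, which pins down the $i$-signature of $\mu$ and forces $\mu=(n)^\reg$. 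A dimension check against the known dimension $2^{\lfloor(n-1)/2\rfloor}$ of the basic spin module then confirms that nothing smaller appears.

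For part (2), I would run an analogous induction for two-row partitions $(n-1,1)$. The residues of the two strictly-removable nodes of $(n-1,1)$ are determined by $n\ppmod p$, and the critical conditions $n\equiv0,1\ppmod p$ are exactly those in which either the part $n-1$ becomes divisible by $p$ (so $l_p((n-1,1))=1$, changing the multiplicity formula of \cref{L300921}) or the two removable residues collide, producing a degenerate $i$-signature that allows an extra composition factor $\D^\mu$ with $\mu\lhd(n-1,1)^\reg$ to appear. In the generic case $n\nequiv0,1\ppmod p$, restriction by either of the two distinct residues reduces to $\s^{(n-2,1)}$ or to $\s^{(n-1)}$ (homogeneous by part (1)), and combining with \cref{branchd} rules out any additional composition factors. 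In the exceptional cases one would exhibit a specific extra factor, either by a Jantzen-type sum argument or by direct construction.

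The main obstacle is the combinatorial control needed to eliminate every proper dominance predecessor of $(n-1,1)^\reg$ in the good cases: the dominance order has many candidates and one wants a uniform method to discard them. An alternative route, and the one originally taken by Wales, is to compute the relevant decomposition matrix entries directly using the Schur--Clifford algebra realization of the basic-spin and hook spin representations; this sidesteps the general inductive combinatorics at the cost of being specific to one- and two-row partitions, and is the route effectively recorded in \cite[Tables III and IV]{Wales}.
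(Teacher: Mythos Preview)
The paper does not prove this theorem; it simply quotes it from Wales's tables \cite{Wales}. So there is no in-paper proof to compare against, and your proposal is an attempt to supply one using the paper's own machinery.

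Your argument for part (1) can indeed be made rigorous along the lines you suggest. Since $(n)$ has a unique strictly-removable node, of some residue $i$, with $\heps_i((n))=1$ and $\heps_j((n))=0$ for $j\neq i$, every composition factor $\D^\mu$ of $\s^{(n)}$ satisfies $\eps_j(\mu)=0$ for $j\neq i$ and $\eps_i(\mu)\ls1$. A nonzero restricted $p$-strict partition always has a normal node, so in fact $\eps_i(\mu)=1$, whence $\te_i(\mu)=(n-1)^\reg$ by induction and $\mu=\tf_i((n-1)^\reg)$ is uniquely determined. This forces $\mu=(n)^\reg$, and no separate dimension check is needed.

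Your sketch for part (2) is shakier. The induction on $n$ does not close as stated: if $n\not\equiv0,1\ppmod p$ and you restrict at the residue of the node $(1,n-1)$, you land on $\s^{(n-2,1)}$, but there is no reason why $n-1\not\equiv0,1\ppmod p$, so the inductive hypothesis may fail for $(n-2,1)$. (For instance when $n\equiv2\ppmod p$ both removable nodes have residue $0$ and your ``two distinct residues'' picture collapses; one can instead apply $\Res_0^2$ to reach $\s^{(n-2)}$ and invoke part (1), but this changes the structure of the argument and must be handled separately.) For the ``only if'' direction you have not actually produced the extra composition factor when $n\equiv0$ or $1\ppmod p$; ``a Jantzen-type sum argument or direct construction'' is a placeholder rather than a proof. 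Wales's original route via explicit matrix realisations of the basic and second-basic spin representations sidesteps both of these difficulties, which is presumably why the paper is content to cite it.
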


\begin{theor}\cite[Theorem 4.4]{Mu}\label{mullerthm}
Suppose $p=3$, and that $\la=\nu\sqcup(3)$ with $\nu$ a $3$-bar core. Then $\la$ is homogeneous.
\end{theor}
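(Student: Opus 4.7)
The cleanest proof is simply to invoke M\"uller's Theorem 4.4 directly, as is done in the statement of the lemma. If one wanted an independent argument, the plan would be to exploit the fact that $\la = \nu \sqcup (3)$ has $3$-bar-weight exactly $1$: the part $(3)$ is a $3$-bar in its own right (since $3 \mid 3$ and replacing it with $0$ is allowed), and removing it leaves the $3$-bar core $\nu$. Thus $\s^\la$ lies in a weight-$1$ block $B$ of $\calt_n$ whose bar-core is $\nu$.

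First I would enumerate the strict partitions in $B$ and their regularisations. By the definition of $3$-bar addition, the strict partitions in $B$ come in three flavours: $\nu\sqcup(3)$ itself; partitions obtained from $\nu$ by increasing a single part $\nu_r$ by $3$ (when this keeps the result strict and the relevant congruence condition is satisfied); and $\nu\sqcup(2,1)$ (when neither $2$ nor $1$ is a part of $\nu$). Since $B$ has weight $1$ this list is short and depends transparently on the shape of $\nu$. A parallel enumeration gives the restricted $3$-strict partitions in $B$, indexing the irreducibles $\D^\mu$.

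Next I would apply Theorem \ref{L300921} to see that $\D^{\la^\reg}$ appears in $\s^\la$ with multiplicity $2^{(l_3(\la)+x-y)/2}$, where $l_3(\la)=l_3(\nu)+1$ and $x,y$ track the parities. All further composition factors $\D^\mu$ of $\s^\la$ must satisfy $\mu \lhd \la^\reg$ and lie in $B$. I would try to rule these out in two complementary ways: combinatorially, by using the branching rules (Corollary \ref{branchs} and Lemma \ref{branchd}) to apply $\Res_i$ to $\s^\la$ and track composition factors down into weight-$0$ blocks, where the decomposition is trivial; and numerically, by the dimension identity $\dim\s^\la = \sum_\mu [\s^\la:\D^\mu]\dim\D^\mu$, using the Schur bar-length formula for $\dim\s^\la$ on the left.

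The main obstacle is control of the dimensions (or more precisely the dimensions of weight spaces or projective characters) of the irreducibles $\D^\mu$ for $\mu \neq \la^\reg$ in $B$; these are typically not accessible without the full decomposition matrix of a weight-$1$ block. M\"uller's proof overcomes this by working directly with projective characters and an induction on weight-$1$ blocks, and in view of the availability of his result it is most efficient simply to quote it.
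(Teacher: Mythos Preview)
Your proposal is correct and matches the paper exactly: the paper does not give its own proof of this theorem but simply cites it as \cite[Theorem 4.4]{Mu}, and your first sentence does precisely that. The sketch you offer of an independent weight-$1$ argument is reasonable background (and you rightly identify that the obstruction is knowing $\dim\D^\mu$ for the other irreducibles in the block, which is essentially what M\"uller's Brauer-tree computation supplies), but it is not needed here.
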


We now reformulate our main result \cref{mainthm}, giving us a classification of homogeneous $\la$ excluding partitions of a certain type. We say that a strict partition $\la$ is \emph{\spc} if there is $i\in\{1,2\}$ such that all the non-zero parts of $\la$ are congruent to $i$ modulo $3$. Equivalently, $\la$ has the form $\nu+3\al$ where $\nu$ is a $3$-bar core and $\al$ is a partition with $l(\al)\ls l(\nu)$.

\begin{theor}\label{T081021}
Suppose $p=3$ and $\la$ is a strict partition which is not \spc. Then $\la$ is homogeneous \iff one of the following holds.
\begin{enumerate}
\item
$\la=(3a)$ with $a\geq 2$.
\item
$\la=\nu\sqcup(3)$ with $\nu$ a $3$-bar core.
\item
$\la\in\{(2,1),(3,2,1),(4,3,2),(4,3,2,1),(5,3,2,1),(5,4,3,1),$ 

$(5,4,3,2),(5,4,3,2,1),(7,4,3,2,1),(8,5,3,2,1)\}$.
\end{enumerate}
\end{theor}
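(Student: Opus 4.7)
The plan is to treat the equivalence as a reformulation of \cref{mainthm} after removing the case of \spc partitions, and to prove each direction by assembling results developed (or to be developed) in the rest of the paper.

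For the "if" direction, each of the three families consists of partitions already known to be homogeneous. The single-part partitions $(3a)$ with $a\geq 2$ are the case $\la=(n)$ of \cref{walthm}(1) specialised to $3\mid n$. The partitions $\nu\sqcup(3)$ with $\nu$ a $3$-bar core are precisely Müller's case, handled by \cref{mullerthm}. For the ten sporadic partitions in case (3), all satisfying $|\la|\leq 23$, homogeneity can be read off from Maas's tables of decomposition numbers; \cref{L300921} identifies $\D^{\la^\reg}$ as a specific composition factor with the predicted multiplicity, and one just checks that no other $\D^\mu$ appears.

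For the "only if" direction, I would assume $\la$ is non-\spc and homogeneous, and argue that $\la$ must lie in one of the three families. The principal tool is the combination of \cref{L300921} with the branching rules \cref{branchs,branchd}: homogeneity forces every composition factor of $\s^\la$ to be $\D^{\la^\reg}$, and then matching $\Res_i^r\s^\la$ against $\Res_i^r\D^{\la^\reg}$ (and similarly under induction) imposes the equalities $\heps_i(\la)=\eps_i(\la^\reg)$ and $\hphi_i(\la)=\phi_i(\la^\reg)$ for each residue $i\in I$, up to multiplicities that are controlled by \cref{branchingrule} and \cref{L300921}. Together with the standard-shifted-tableau weights from \cref{tabweight}, this rules out most candidates on purely combinatorial grounds, since a generic non-\spc $\la$ admits a residue $i$ for which $\heps_i(\la)$ strictly exceeds $\eps_i(\la^\reg)$, producing a composition factor $\D^\mu$ with $\mu\neq\la^\reg$.

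The partitions not eliminated by branching alone will be handled separately. For $\la$ of sufficiently large $3$-bar weight whose block is RoCK, the wreath product description developed in \cref{wreathsec} (combined with the Kleshchev--Livesey structure theorem for spin RoCK blocks) rules out homogeneity except when $\la$ is \spc. For partitions of small or intermediate size that escape both the generic branching argument and the RoCK analysis, explicit dimension comparisons together with Maas's decomposition tables isolate exactly the ten sporadic partitions in case (3). The main obstacle will be the bookkeeping needed to verify that every non-\spc $\la$ outside the three listed families is caught by one of these three arguments, and in particular that the boundary between the RoCK regime and the sporadic regime yields precisely those ten partitions and no others; this is also where the non-\spc hypothesis is used decisively, to exclude the large family (1) of \cref{mainthm} from consideration.
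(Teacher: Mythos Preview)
Your ``if'' direction is essentially what the paper does (with the minor note that for $(8,5,3,2,1)$ the paper uses the induction functor $\Ind_1$ from $(7,4,3,2,1)$ rather than consulting tables directly).

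For the ``only if'' direction, you have assembled the right toolbox but missed the organising principle. The paper's proof is an \emph{induction on $|\la|$}: if $\la$ is homogeneous then by \cref{T300921} so is $\la^{-i}$, and hence by induction $\la^{-i}$ lies in $H_0\cup H_1\cup H_2\cup H_3\cup\{\la\}$. The entire case analysis (six cases, many subcases) is on the shape of $\la^{-i}$, working backwards to pin down $\la$. Your proposal instead tries to attack each non-\spc $\la$ directly via branching, RoCK, or tables, and this is where it goes wrong in two places.

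First, the RoCK argument does not play the role you assign it. The Kleshchev--Livesey machinery in \cref{wreathsec} is applied to \emph{\spc} partitions $\nu+(3^d)$ and $\nu+3(a^b)$ (\cref{T210322,C210322}); its purpose is to show that certain \spc partitions are \emph{not} homogeneous, so that when $\la^{-0}$ is \spc of that shape, the inductive step goes through. It is not used to rule out non-\spc $\la$ sitting in RoCK blocks. Second, dimension arguments are not a mop-up for ``small or intermediate'' partitions: the paper needs a dozen lemmas (\cref{deglem1}--\cref{deglem12}, \cref{0211deg}) establishing $\ddeg\la>\ddeg\mu$ for \emph{infinite} one-parameter families, together with the projective-module trick of \cref{projlem,projcases} for a handful of cases those lemmas miss. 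The ``generic $\la$ has $\heps_i(\la)>\eps_i(\la^\reg)$'' claim via \cref{dnall} is one ingredient, but it leaves large families untouched, and it is the interplay between induction, the \sadbl/\srmbl criteria of \cref{dn0}, and the $\ddeg$ comparisons that closes the argument.
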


We can immediately observe the following consequence, which is relevant for \cref{irredhomog}.

\begin{cor}
Suppose $p=3$ and $\la\in\RP$. If $\la$ is homogeneous, then $l_3(\la)\ls1$.
\end{cor}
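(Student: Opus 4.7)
The plan is to reduce the corollary to a short case-check by invoking \cref{T081021} together with the definition of a \spc partition. The case division is exhaustive: either $\la$ is \spc, in which case the claim is immediate, or $\la$ falls under one of the three bullets of \cref{T081021}, each of which can be inspected directly.

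First I would dispose of the \spc case. By definition there is $i \in \{1,2\}$ such that every non-zero part of $\la$ is congruent to $i$ modulo $3$; since neither residue is $0$, no non-zero part of $\la$ is divisible by $3$, so $l_3(\la) = 0 \le 1$.

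For the non-\spc cases I would work through the three options listed in \cref{T081021}. If $\la = (3a)$ with $a \ge 2$ then trivially $l_3(\la) = 1$. If $\la = \nu \sqcup (3)$ with $\nu$ a $3$-bar core, then $l_3(\la) = 1 + l_3(\nu)$; here the key small observation is that a $3$-bar core has no part divisible by $3$, because any such part (necessarily at least $3$) would allow the removal of a $3$-bar by subtracting $3$, contradicting that $\nu$ equals its own $3$-bar core. Hence $l_3(\la) = 1$. Finally, for each of the ten exceptional partitions $(2,1)$, $(3,2,1)$, $(4,3,2)$, $(4,3,2,1)$, $(5,3,2,1)$, $(5,4,3,1)$, $(5,4,3,2)$, $(5,4,3,2,1)$, $(7,4,3,2,1)$, $(8,5,3,2,1)$ one reads off that at most one part is divisible by $3$.

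There is no real obstacle here: the whole content of the corollary is packaged inside \cref{T081021} and the definition of \spc, and the only substantive micro-step is the remark that $3$-bar cores contain no parts divisible by $3$. Collecting the cases gives $l_3(\la) \le 1$ throughout.
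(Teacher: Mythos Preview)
Your proof is correct and follows essentially the same approach as the paper: split into the \spc case (where by definition no part is divisible by $3$) and the three families of \cref{T081021}, each of which visibly has at most one part divisible by $3$. The paper's version is terser but the argument is identical.
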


\begin{proof}
We can easily see that each of the partitions in \cref{T081021} has at most one part divisible by $3$. Any other homogeneous partition is \spc, and so by definition has no parts divisible by~$3$.
\end{proof}

For \spc partitions, we make the following conjecture. A partition $\al$ is called \emph{$3$-Carter} if for every $1\ls r<s$ and every $1\ls c\ls\al_s$, the $(r,c)$-hook length of $\al$ and the $(s,c)$-hook length of $\al$ are divisible by the same powers of $3$. (The name ``Carter'' refers to a conjecture formulated by Carter, and proved by James \cite{jamescarter}, on irreducible Specht modules for the symmetric group.)

\begin{conj}\label{spcconj}
Suppose $p=3$ and $\la$ is a \spc strict partition, and write $\la=\nu+3\alpha$ with $\nu$ a $3$-bar core. Then $\la$ is homogeneous \iff $\al$ is a $3$-Carter partition.
\end{conj}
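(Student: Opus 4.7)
The plan is to leverage the Kleshchev--Livesey theory of spin RoCK blocks combined with a wreath product analysis emulating Chuang--Tan. Fix a \spc strict partition $\la = \nu + 3\al$ with $\nu$ a $3$-bar core and $l(\al) \ls l(\nu)$. Then $\s^\la$ lies in a block of $\calt_{|\la|}$ with bar core $\nu$ and bar weight $d = |\al|$. Provided $l(\nu)$ is large relative to $d$ (the RoCK condition), Kleshchev and Livesey furnish a Morita equivalence between this block and a wreath product superalgebra $\awrd$, for an explicit ``basic'' superalgebra $A$ depending only on $p$.

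The first step is to identify the image of $\s^\la$ under this Morita equivalence. I expect it to be a ``standard'' module $X^\al$ of the form $V^{\otimes d}$ twisted by the Specht module $S^\al$ for $\F\sym_d$, where $V$ is a distinguished irreducible $A$-supermodule. The identification would be verified by matching the Brundan--Kleshchev branching rules (\cref{branchingrule} and \cref{branchs}) against the natural induction and restriction structure on $\awrd$, and by checking that the head of $X^\al$ corresponds to $\D^{\la^\reg}$ via regularisation (\cref{L300921}).

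The second step, emulating Chuang--Tan, is to show that $X^\al$ is homogeneous as an $\awrd$-supermodule \iff $S^\al$ is irreducible as an $\F\sym_d$-module. The irreducibles for $\awrd$ admit a labelling by pairs $(\mu, W)$ with $\mu$ a partition of $d$ and $W$ an irreducible $A$-supermodule. The forward direction is a direct computation via the wreath product structure. For the converse, distinct composition factors of $S^\al$ must give rise to distinct labels in the composition series of $X^\al$; this requires the labelling to be ``faithful'' on the Specht layer. Applying James's theorem \cite{jamescarter} that $S^\al$ is irreducible \iff $\al$ is $3$-Carter then completes the RoCK case.

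The main obstacle will be twofold. First, in the non-RoCK regime (where $l(\nu)$ is too small for $d$), the Morita equivalence is unavailable; one must either invoke a Scopes-like equivalence between blocks of different cores but equal weight, or use induction/restriction via \cref{branchs} to reduce to a \spc partition in a RoCK block. Second, the type M versus type Q distinction from the superalgebra structure introduces subtleties that are absent in the original Chuang--Tan setting; one must verify that these interact compatibly with $3$-bar parity so that homogeneity in the supermodule sense coincides with $S^\al$-irreducibility. Handling the interplay of these two dichotomies --- the $3$-Carter condition on $\al$ and the type M/Q distinction on $\awrd$ --- is likely to be the most delicate aspect of the proof.
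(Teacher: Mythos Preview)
The statement you are trying to prove is \cref{spcconj}, which is a \emph{conjecture}: the paper does not prove it and explicitly says so. Immediately after stating it, the authors write that they ``expect this conjecture to hold in view of \cite[Conjecture 1]{KL} and \cite[Theorem 6.17]{FKM}'', and list only partial cases settled in the paper. So there is no ``paper's own proof'' to compare your proposal against.

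Your outline is essentially the strategy the authors themselves have in mind, but it rests on an ingredient that is not available in the paper. You assert that Kleshchev--Livesey ``furnish a Morita equivalence'' between the RoCK block and $\awrd$. What the paper actually imports from \cite{KL} is \cref{klthmf}, which only gives an idempotent truncation $f\calbc^{\nu,d}f\cong(\awrd)\otimes\calc_{r_0+2d}$. This is strictly weaker: \cref{fd0} shows that $f$ annihilates the simple $\D^{\nu\sqcup(3d)}$, so $f$ is not full and the truncation is not a Morita equivalence. The upgrade to a genuine Morita equivalence is precisely \cite[Conjecture~1]{KL}, still open when this paper was written. Without it, your identification of $\s^\la$ with a wreath-product ``standard module'' $X^\al$, and the transfer of homogeneity, cannot be carried out as stated; that is the gap, and it is why the statement is recorded as a conjecture rather than a theorem.

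One of your anticipated obstacles is in fact already handled: the ``non-RoCK regime'' reduction you worry about is exactly \cref{L210322_3}, which shows that homogeneity of $\nu+3\al$ depends only on $\al$ and not on the choice of $3$-bar core $\nu$ (subject to $l(\nu)\ge l(\al)$), so one may freely enlarge $\nu$ to reach a RoCK block.
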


We expect this conjecture to hold in view of \cite[Conjecture 1]{KL} and \cite[Theorem 6.17]{FKM}. The results in this paper show that it is true whenever $l(\al)=1$, or the last column of $\al$ has length at least $3$, or the last two columns of $\al$ both have length $2$. Known decomposition matrices also show that the conjecture is true when $\al=(2,1)$ or $(3,1)$.

We will give the proof of \cref{T081021} in the final section of this paper. Before that, we recall some other results we shall need and prove some useful results.

\section{Induction and restriction of homogeneous modules}\label{indressec}

In this section we apply the induction and restriction functors from \cref{indresfunc} to homogeneous modules, in order to provide a basis for the induction proof of our main theorem.

\begin{lemma}\label{T300921}
Let $\la\in\RP(n)$ and $i\in I$, and assume that $\la$ is homogeneous. Then $\la^{-i}$ is homogeneous. Furthermore $\heps_i(\la)=\eps_i(\la^\reg)$ and $(\la^\reg)\im=(\la^{-i})^\reg$.
\end{lemma}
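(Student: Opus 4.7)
The plan is to combine Theorem \ref{L300921} (which identifies the composition factors of $\s^\la$ when $\la$ is homogeneous) with the branching rules in Corollary \ref{branchs} and Lemma \ref{branchd}, exploiting the fact that the restriction functors $\Res_i$ are exact and so preserve composition multiplicities.

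First I would unpack the homogeneity hypothesis. By Theorem \ref{L300921}, since $\la$ is homogeneous, every composition factor of $\s^\la$ is isomorphic to $\D^{\la^\reg}$, so $\s^\la\sim c\D^{\la^\reg}$ for some $c>0$. Applying the exact functor $\Res_i^r$ preserves composition factors, hence $\Res_i^r\s^\la\sim c\Res_i^r\D^{\la^\reg}$ for every $r\ge 0$. In particular $\Res_i^r\s^\la=0$ if and only if $\Res_i^r\D^{\la^\reg}=0$.

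Next I would compare the vanishing thresholds. Corollary \ref{branchs} says that $\heps_i(\la)$ is the largest $r$ with $\Res_i^r\s^\la\ne 0$, while Lemma \ref{branchd} says the corresponding threshold for $\D^{\la^\reg}$ is $\eps_i(\la^\reg)$. The previous step then forces $\heps_i(\la)=\eps_i(\la^\reg)$; call this common value $N$. Substituting back at $r=N$ gives
\[
a\,\s^{\la^{-i}}\sim \Res_i^{N}\s^\la\sim c\,\Res_i^{N}\D^{\la^\reg}\sim cb\,\D^{(\la^\reg)\im},
\]
where $a,b>0$ come from Corollary \ref{branchs} and Lemma \ref{branchd}. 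Hence every composition factor of $\s^{\la^{-i}}$ is isomorphic to $\D^{(\la^\reg)\im}$, which is precisely the statement that $\la^{-i}$ is homogeneous.

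Finally, I would identify $(\la^\reg)\im$ with $(\la^{-i})^\reg$. By Theorem \ref{L300921} applied to $\la^{-i}$, the module $\D^{(\la^{-i})^\reg}$ is a composition factor of $\s^{\la^{-i}}$ (with positive multiplicity); but the previous step shows the only composition factor appearing is $\D^{(\la^\reg)\im}$, so $(\la^{-i})^\reg=(\la^\reg)\im$, as required. I do not anticipate any serious obstacle here: the argument is essentially a bookkeeping exercise once one notices that homogeneity lets one transfer statements between $\s^\la$ and $\D^{\la^\reg}$ via exactness of $\Res_i$.
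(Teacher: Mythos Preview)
Your proposal is correct and follows essentially the same route as the paper: both arguments use Theorem~\ref{L300921} to write $\s^\la\sim c\,\D^{\la^\reg}$, apply the exact functor $\Res_i^r$ and compare the vanishing thresholds from Corollary~\ref{branchs} and Lemma~\ref{branchd} to obtain $\heps_i(\la)=\eps_i(\la^\reg)$ and $a\,\s^{\la^{-i}}\sim b\,\D^{(\la^\reg)\im}$, then invoke Theorem~\ref{L300921} once more for $\la^{-i}$ to identify $(\la^{-i})^\reg=(\la^\reg)\im$. Your write-up simply spells out in more detail what the paper compresses into a few lines.
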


\begin{proof}
By \cref{L300921} we have $\s^\la\sim(\D^{\la^\reg})^{\oplus c}$ for some $c>0$. Since the functor $\Res_i$ is exact \cref{branchs,branchd} immediately give $\heps(\la)=\eps(\la^\reg)$ and
\[
(\s^{\la^{-i}})^{\oplus a}\sim(\D^{(\la^\reg)\im})^{\oplus b}
\]
for some $a,b>0$. Now \cref{L300921} gives $(\la^{-i})^\reg=(\la^\reg)\im$.
\end{proof}

In the same way we can prove the following.

\begin{lemma}\label{T300921_2}
Let $\la\in\RP(n)$ and $i\in I$, and assume that $\la$ is homogeneous. Then $\la^{+i}$ is homogeneous. Furthermore $\hphi_i(\la)=\phi_i(\la^\reg)$ and $(\la^\reg)\ip=(\la^{+i})^\reg$.
\end{lemma}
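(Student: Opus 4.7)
The plan is to mirror the proof of \cref{T300921} almost verbatim, replacing the exact restriction functor $\Res_i$ by the exact induction functor $\Ind_i$, and invoking the ``addition'' parts of the branching statements in \cref{branchs,branchd} instead of their ``removal'' parts. Since all of the required data (exactness, the leading composition factor given by \cref{L300921}, and the matching powers in the two branching statements) are already in place, no new ingredients are needed.

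First, I would use \cref{L300921} together with the homogeneity of $\la$ to write $\s^\la\sim(\D^{\la^\reg})^{\oplus c}$ for some $c>0$. Since $\Ind_i^r$ is exact, applying it to both sides gives $\Ind_i^r\s^\la\sim(\Ind_i^r\D^{\la^\reg})^{\oplus c}$ for every $r\geq 0$. Next, I would set $\hphi=\hphi_i(\la)$ and $\phi=\phi_i(\la^\reg)$ and compare the two branching statements $\Ind_i^{\hphi}\s^\la\sim b\s^{\la^{+i}}$ from \cref{branchs} and $\Ind_i^{\phi}\D^{\la^\reg}\sim a\D^{(\la^\reg)\ip}$ from \cref{branchd}, together with the vanishing of $\Ind_i^r$ above these thresholds. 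Matching the two thresholds at which the functor starts to vanish forces $\hphi=\phi$, and at the common value this yields the equivalence
\[
(\s^{\la^{+i}})^{\oplus b}\sim(\D^{(\la^\reg)\ip})^{\oplus ca}.
\]

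From this equivalence, every composition factor of $\s^{\la^{+i}}$ is isomorphic to $\D^{(\la^\reg)\ip}$, so $\la^{+i}$ is homogeneous. To identify the regularisation, I would invoke \cref{L300921} once more: it tells us that the unique composition factor of $\s^{\la^{+i}}$ whose label is maximal in the dominance order is $\D^{(\la^{+i})^\reg}$. Combined with the fact that every composition factor is $\D^{(\la^\reg)\ip}$, this pins down $(\la^{+i})^\reg=(\la^\reg)\ip$.

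The argument is essentially routine, and I do not expect any real obstacle. The only mild subtlety is the edge case $\hphi_i(\la)=0$ (no \sadbl $i$-node), where one has $\la^{+i}=\la$; here one must note that $\Ind_i\s^\la=0$ forces $\Ind_i\D^{\la^\reg}=0$ and hence $\phi_i(\la^\reg)=0$, after which the remaining claims are trivial.
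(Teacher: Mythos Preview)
Your proposal is correct and matches the paper's approach exactly: the paper simply states that \cref{T300921_2} is proved ``in the same way'' as \cref{T300921}, and your argument is precisely that same-way proof, replacing $\Res_i$ by $\Ind_i$ and invoking the $\Ind$-parts of \cref{branchs,branchd}. The edge-case discussion for $\hphi_i(\la)=0$ is harmless but unnecessary, since the argument already works uniformly.
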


\begin{cor}\label{C081021}
Let $\la\in\RP(n)$ and suppose $i\in I$ with $\hphi_i(\la)=0$. Then $\la$ is homogeneous if and only if $\la^{-i}$ is homogeneous.
\end{cor}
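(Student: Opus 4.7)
The plan is to derive both directions from \cref{T300921,T300921_2}, connected by a combinatorial identity
\[
(\la^{-i})^{+i}=\la
\]
that I expect to hold whenever $\hphi_i(\la)=0$. The forward direction (that $\la$ homogeneous forces $\la^{-i}$ homogeneous) is already the content of \cref{T300921}, and requires no condition on $\hphi_i(\la)$. For the converse, I would apply \cref{T300921_2} to the homogeneous partition $\la^{-i}$ to conclude that $(\la^{-i})^{+i}$ is homogeneous; the identity $(\la^{-i})^{+i}=\la$ then finishes the proof.

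The heart of the argument is therefore to prove the combinatorial identity. My approach would be to classify the strictly-addable $i$-nodes of $\la^{-i}$ explicitly. Let $R$ be the set of positions vacated in passing from $\la$ to $\la^{-i}$; these sit at the right ends of a certain set of rows of $\la$, at most one per row. Each position in $R$ is plainly strictly-addable to $\la^{-i}$, since simultaneously re-adjoining all of $R$ recovers the strict partition $\la$. The key step is then to rule out any further strictly-addable $i$-nodes of $\la^{-i}$: if $(r',c')\notin R$ were strictly-addable to $\la^{-i}$, witnessed by a set $S\ni(r',c')$ of $i$-nodes with $\la^{-i}+S$ strict, then by suitably merging $S$ with $R$ (checking strictness row by row) one would produce a set of $i$-nodes containing $(r',c')$ that can be adjoined to $\la$ to give a strict partition, exhibiting $(r',c')$ as a strictly-addable $i$-node of $\la$ and contradicting $\hphi_i(\la)=0$. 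Hence the strictly-addable $i$-nodes of $\la^{-i}$ are exactly the positions in $R$, and adjoining them all yields $(\la^{-i})^{+i}=\la$.

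The main technical obstacle will be making the merging step precise, especially for $i=0$, where strictly-addable $i$-nodes can fail to be addable on their own and exist only in cooperation with other added nodes. As a useful sanity check, the crystal-theoretic relations in \cref{T300921,T300921_2}, together with the fact that $(\mu\im)\ip=\mu$ for a restricted $p$-strict partition $\mu$ with $\phi_i(\mu)=0$, immediately force $|(\la^{-i})^{+i}|=|\la|$ and $((\la^{-i})^{+i})^\reg=\la^\reg$; this narrows the candidates for $(\la^{-i})^{+i}$ and corroborates the identity.
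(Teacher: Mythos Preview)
Your approach is correct and matches the paper's (one-line) proof, which simply cites \cref{T300921,T300921_2}; the combinatorial identity $(\la^{-i})^{+i}=\la$ under the hypothesis $\hphi_i(\la)=0$ is exactly the bridge, and the paper is leaving it to the reader. The merging step you flag as an obstacle is in fact straightforward: the nodes added in any row are consecutive starting at column $\la^{-i}_r+1$, so $\la^{-i}+(S\cup R)$ is the row-wise maximum of the strict partitions $\la^{-i}+S$ and $\la^{-i}+R=\la$, and the row-wise maximum of two strict partitions is strict.
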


\begin{proof}
This follows by \cref{T300921,T300921_2}.
\end{proof}

Now we prove an analogue of a result from~\cite{F} which helps us to apply the condition $(\la^\reg)\im=(\la^{-i})^\reg$ from \cref{T300921}. To help with future work, we continue to allow $p$ to be any odd prime, though our results simplify considerably in the special case $p=3$ that we consider in this paper.

We recall the definition of the ladders $\ladd l$ from \cref{regnsec}. Observe that the nodes in $\ladd l$ all have the same residue, and that this residue depends on the residue of $l$ modulo $p-1$.

Suppose $\la$ is a $p$-strict partition, and let $\lad l\la$ denote the number of nodes of $\la$ in $\ladd l$. Let $\add l\la$ denote the number of \sadbl nodes of $\la$ in $\ladd l$, and $\badd l\la$ the number of \adbl nodes of $\la$ in $\ladd l$. Define $\rem l\la$ and $\brem l\la$ similarly, for \srmbl nodes. Define all these numbers to be $0$ when $l<0$.

We begin by considering ladders comprising nodes of residue $\frac12(p-1)$.

\begin{lemma}\label{arladd1}
Suppose $\la$ is a $p$-strict partition, and $l\in\bbn$ with $l\equiv\frac12(p-1)\ppmod{p-1}$. Then
\[
\brem{l-p+1}\la-\badd l\la=\begin{cases}
\lad l\la-\lad{l-1}\la-\lad{l-p+2}\la+\lad{l-p+1}\la&\text{if }p\gs5
\\
\lad l\la-\lad{l-1}\la+\lad{l-2}\la&\text{if }p=3.
\end{cases}
\]
\end{lemma}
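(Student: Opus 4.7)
The plan is to decompose both sides of the identity as sums of per-row contributions, verify agreement ``naively'' (ignoring the $p$-strictness constraints on \adbl and \rmbl nodes), and then check that all discrepancies caused by $p$-strictness cancel in pairs.

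I would first observe a structural fact: for any $m$ with $m \equiv \frac{p-1}{2}\ppmod{p-1}$, the intersection of $\ladd m$ with row $r$, when non-empty, consists of a single column $c_m(r)$ satisfying $c_m(r) \equiv \frac{p+1}{2}\ppmod p$ and $c_m(r-1) = c_m(r) + p$. The ladders $\ladd{l-p+1}, \ladd{l-p+2}, \dots, \ladd{l-1}, \ladd l$ together fill a block of $p+1$ consecutive columns in row $r$, of which a single interior residue-$0$ ladder accounts for two adjacent columns. When $p = 3$, this residue-$0$ ladder is simultaneously $\ladd{l-1}$ and $\ladd{l-p+2}$, which is why the RHS in the statement has only three ladder terms in that case.

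Next I would express each of $\lad m\la$, $\badd l\la$ and $\brem{l-p+1}\la$ as a sum over rows of indicator functions involving $\la_r$, the columns $c_m(r)$, and the adjacent parts $\la_{r\pm1}$. A direct computation (splitting according to the position of $\la_r$ in the block of $p+1$ columns, and using that the two columns of the residue-$0$ interior ladder have consecutive indices for $p\geq 5$ and collapse into a single pair for $p=3$) shows that the ``naive'' contribution of row $r$ to both sides---that is, ignoring the $p$-strictness conditions---equals
\[
[\la_r = c_{l-p+1}(r)] - [\la_r = c_l(r)-1],
\]
so the naive identity holds term by term.

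The main obstacle is to show that the $p$-strictness violations align consistently on both sides. The key observation is the relation $c_{l-p+1}(r-1) = c_l(r)$: the configuration blocking an \adbl $\ladd l$-node in row $r$ (namely $\la_r = c_l(r)-1$ and $\la_{r-1} = c_l(r)$) coincides precisely with the configuration blocking a \rmbl $\ladd{l-p+1}$-node in row $r-1$. In such a ``paired'' blocking both actual LHS contributions vanish, while the corresponding naive $-1$ in row $r$ and $+1$ in row $r-1$ cancel on the naive RHS as well, so the identity is preserved. A symmetric pairing (between row $r$ and row $r+1$) handles the blocking of the \rmbl $\ladd{l-p+1}$-node in row $r$. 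Since the columns of $\ladd l$ and $\ladd{l-p+1}$ are never divisible by $p$, no other $p$-strictness obstructions arise; a finite case analysis on the position of $\la_r$ within the block of $p+1$ columns then completes the proof.
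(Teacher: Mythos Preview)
Your approach is correct and is a close variant of the paper's argument. The paper localizes slightly differently: for each node $(r,c)\in\ladd l$ it considers the four-node set $\{(r-1,c),(r-1,c+1),(r,c-1),(r,c)\}$ (which lie in ladders $l-p+1$, $l-p+2$, $l-1$, $l$ respectively) and checks the identity directly on this set by examining all possible intersections with $[\la]$, then sums. The point of this grouping is that the removability of $(r-1,c)$ and the addability of $(r,c)$ are both completely determined by which of these four nodes lie in $[\la]$, so no separate ``naive plus correction'' step is needed; the $p$-strictness conditions are absorbed automatically into the block-by-block case check.

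Your row-by-row decomposition is equivalent after a shift: your pairing observation $c_{l-p+1}(r-1)=c_l(r)$ is exactly the relation that puts the removable $(r-1,c)$-node and the addable $(r,c)$-node into the same four-node block in the paper's grouping, and your cancellation argument for the blocked configurations $\la_{r-1}=c$, $\la_r=c-1$ is precisely what the paper's case analysis handles in one stroke. So the two proofs are really the same computation organized two ways; the paper's block grouping is a bit more streamlined, while your presentation makes the role of $p$-strictness more explicit.
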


\begin{proof}
This is proved in a similar way to \cite[Lemma 4.15]{F}. For each node $(r,c)\in\ladd l$, we can consider the set of (up to) four nodes
\[
\{(r-1,c),(r-1,c+1),(r,c-1),(r,c)\}\cap\bbn^2.
\]
By examining the possible intersections of this set with $[\la]$, we can show that the lemma holds when restricted to just these nodes. Then by summing over all nodes $(r,c)\in\ladd l$, we obtain the result.
\end{proof}

When $\la$ is a strict partition and $\ladd l$ consists of nodes of non-zero residue, it is easy to see that $\badd l\la=\add l\la$ and $\brem{l-p+1}\la=\rem{l-p+1}\la$. As a consequence, we obtain
\[
\add l\la-\rem{l-p+1}\la=\badd l{\la^\reg}-\brem{l-p+1}{\la^\reg}
\]
for any strict partition $\la$ when $l\equiv\frac12(p-1)\ppmod{p-1}$, because $\lad k{\la^\reg}=\lad k\la$ for every $k$. Now we can obtain a precise criterion for when $\dn i\la$ in the case $i=\frac12(p-1)$. This is analogous to \cite[Proposition 4.9]{F3}.

\begin{propn}\label{dn1}
Suppose $\la$ is a strict partition and that $i=\frac12(p-1)$. Then $\dn i\la$ \iff $\la$ has a \srmbl $i$-node and a \sadbl $i$-node in a longer ladder.
\end{propn}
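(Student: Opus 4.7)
The plan is to compare $\hep i\la$ and $\ep i{\la^\reg}$ by analysing how strictly removable $i$-nodes of $\la$ and normal $i$-nodes of $\la^\reg$ are distributed across the ladders of residue $i$. The inequality $\hep i\la\ge\ep i{\la^\reg}$ is already forced: by \cref{L300921}, $\D^{\la^\reg}$ is a composition factor of $\s^\la$, so exactness of $\Res_i$ combined with \cref{branchs,branchd} gives $\ep i{\la^\reg}\ls\hep i\la$. It remains to characterise strict inequality.

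For the restricted $p$-strict partition $\la^\reg$ and residue $i=\frac12(p-1)$ I would first establish the following local structure: each ladder $\ladd l$ of residue $i$ contains at most one addable and at most one removable $i$-node of $\la^\reg$, and never both. This should follow from a direct local check, using that $(\la^\reg)_r-(\la^\reg)_{r+1}<p$ (or $=p$ with $(\la^\reg)_r\nequiv0\Md p$) tightly constrains the shape near the unique candidate addable/removable position in each such ladder. Then, comparing column positions across ladders, I would show that whenever $l'>l$ are ladders of residue $i$ such that $\ladd{l'}$ carries an addable $i$-node and $\ladd l$ carries a removable one, the addable node sits at a column strictly smaller than the removable node. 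Hence reading the $i$-signature left-to-right by column amounts to pairing addables in longer ladders with removables in strictly shorter ladders, so $\ep i{\la^\reg}$ counts exactly those removable $i$-nodes of $\la^\reg$ left unmatched by this greedy pairing.

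Next I would invoke \cref{arladd1}. Since $\lad k\la=\lad k{\la^\reg}$ for every $k$, the right-hand side is identical for $\la$ and $\la^\reg$, giving
\[
\rem{l-p+1}\la-\add l\la=\rem{l-p+1}{\la^\reg}-\add l{\la^\reg}
\]
for each $l\equiv\tfrac12(p-1)\Md{p-1}$. Summing over the ladders of residue $i$ and combining with the first step, a telescoping argument expresses $\hep i\la-\ep i{\la^\reg}$ as a non-negative count of strictly removable $i$-nodes of $\la$ whose would-be partner addable $i$-node lies in a strictly longer ladder. This is strictly positive precisely when $\la$ carries a strictly removable $i$-node in some $\ladd l$ together with a strictly addable $i$-node in some $\ladd{l'}$ with $l'>l$, yielding the claimed equivalence.

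The hard part will be the middle step: pinning down the local structure of $\la^\reg$ inside each ladder of residue $i$, and reconciling the column ordering used to define the $i$-signature with the ladder ordering. The choice $i=\frac12(p-1)$ is what makes this work cleanly, because the neighbouring ladders $l\pm1$ have residue different from $i$ and so do not interfere with the $i$-signature of $\la^\reg$. Once this alignment is nailed down, the telescoping via \cref{arladd1} is routine combinatorics.
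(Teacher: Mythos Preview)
Your overall strategy matches the paper's: use \cref{arladd1} to transfer the ladder-by-ladder $+/-$ counts from $\la^\reg$ to $\la$, and then analyse the reduced $i$-signature. The paper's proof is essentially a reference to \cite[Proposition~4.9]{F3}, and the crucial observation it records is precisely the ordering fact you are aiming for.

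However, the specific structural claim you propose to prove is false. You assert that for $\mu=\la^\reg$ each ladder of residue $i=\tfrac12(p-1)$ contains at most one removable $i$-node (and at most one addable $i$-node, and never both). Take $p=3$ and $\mu=(5,2)$, which is strict and hence equals its own regularisation. Then $(1,5)$ and $(2,2)$ are both removable $1$-nodes of $\mu$, and both lie in $\ladd3$. So the ``at most one removable'' part fails.

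Fortunately your argument does not actually need uniqueness. What you need, and what the paper states as the crucial point, is only the \emph{ordering}: when the addable and removable $i$-nodes of $\la^\reg$ are read from left to right, nodes in longer ladders come first, and within each ladder the removable nodes come before the addable nodes. This weaker claim is what lets you write the $i$-signature of $\la^\reg$ in block form
\[
\cdots[\badd{l_3}{\la^\reg}][-\brem{l_2}{\la^\reg}][\badd{l_2}{\la^\reg}][-\brem{l_1}{\la^\reg}][\badd{l_1}{\la^\reg}],
\]
after which your use of \cref{arladd1} and the telescoping comparison with $\hep i\la=\sum_l\rem l\la$ goes through exactly as you outline. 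So replace the uniqueness claim with the ordering claim and the proof is complete; this is then the same argument as the paper's.
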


\begin{proof}
The proof is essentially the same as for \cite[Proposition 4.9]{F3}. The crucial point is that when we read the \adbl and \rmbl $i$-nodes of a restricted $p$-strict partition from left to right, the nodes in longer ladders come first, and within each ladder the \rmbl nodes come before the \adbl nodes.
\end{proof}

For residues $i\neq\frac12(p-1)$, it is harder to give a necessary and sufficient criterion for $\dn i\la$, but it suffices for our purposes to show that the condition in \cref{dn1} is sufficient.

We consider the case $i=0$. Take $l\equiv0\ppmod{p-1}$, and define $\str l\la$ to be the number of nodes $(r,c)\in\ladd l$ such that $c\equiv0\ppmod3$, $r\gs2$ and $(\la_{r-1},\la_r,\la_{r+1})=(c+1,c,c-1)$.

\begin{lemma}\label{lads}
Suppose $\la$ is a $p$-strict partition and $l\equiv0\ppmod{p-1}$. Then
\begin{align*}
\brem{l-p+1}\la-\badd l\la&=\lad l\la-2\lad{l-1}\la-2\lad{l-p+2}\la+\lad{l-p+1}\la-\delta_{l0}.
\\
\intertext{Furthermore, if $\la$ is strict, then}
\rem{l-p+1}\la-\add l\la&=\lad l\la-2\lad{l-1}\la-2\lad{l-p+2}\la+\lad{l-p+1}\la-\str l\la+\str{l-p+1}\la-\delta_{l0}.
\end{align*}
\end{lemma}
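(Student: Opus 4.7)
The plan is to mirror the local node-by-node argument of Lemma~\ref{arladd1} (compare [F, Lemma~4.15]), now applied to a ladder consisting of $0$-nodes.

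For the first identity, for each $(r,c)\in\ladd l$ I would examine the local block
\[
S_{r,c}=\{(r-1,c),\,(r-1,c+1),\,(r,c-1),\,(r,c)\}\cap\bbn^2,
\]
whose members (when defined) lie respectively in the ladders $\ladd{l-p+1}$, $\ladd{l-p+2}$, $\ladd{l-1}$ and $\ladd l$, with $\ladd{l-1}=\ladd{l-p+2}$ in the degenerate case $p=3$. For each intersection pattern $S_{r,c}\cap[\la]$ that can actually occur for a $p$-strict partition, I would read off the local contribution to $\badd l\la$, $\brem{l-p+1}\la$ and to each ladder-size term, and a case-by-case verification then shows the local identity holds. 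Summing over $(r,c)\in\ladd l$ produces the global formula, and the $-\delta_{l0}$ term accounts for the unique ladder node $(1,1)\in\ladd 0$, whose surrounding block $S_{1,1}$ is truncated to $\{(1,1)\}$ and produces a discrepancy of exactly one.

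For the second identity, I would compare the $p$-strict count $\brem{l-p+1}\la-\badd l\la$ with the strict count $\rem{l-p+1}\la-\add l\la$. Since \sadbl and \adbl coincide (as do \srmbl and \rmbl) at nodes of non-zero residue, the discrepancy arises only at $0$-nodes of the relevant ladders. A local analysis shows that these discrepancies are captured precisely by the configurations $(\la_{r-1},\la_r,\la_{r+1})=(c+1,c,c-1)$ with $p\mid c$ appearing in the definition of $\str l\la$: such a configuration in $\ladd l$ forces the appearance of a $0$-node in $\ladd{l-p+1}$ which is \rmbl but not \srmbl, and conversely the analogous configuration in $\ladd{l-p+1}$ produces a $0$-node in $\ladd l$ which is \adbl but not \sadbl. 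After matching signs, the net correction to the first identity is $-\str l\la+\str{l-p+1}\la$, which yields the second identity.

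The main obstacle will be the combinatorial bookkeeping in the local case analysis: one must handle the many admissible shapes of $S_{r,c}\cap[\la]$, the boundary configurations (when $r=1$ or $c=1$ so $S_{r,c}$ is truncated), and — crucially for the second identity — correctly match each strict/$p$-strict discrepancy with the relevant configuration counted by $\str l\la$ or $\str{l-p+1}\la$. The $p=3$ ladder degeneracy $\ladd{l-1}=\ladd{l-p+2}$ is absorbed cleanly by the unified formula, but it does collapse several sub-cases into one and deserves separate verification.
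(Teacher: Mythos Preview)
Your approach has a genuine gap in the first identity. The four-node set $S_{r,c}$ from \cref{arladd1} does not transfer to residue-$0$ ladders, because when $l\equiv0\ppmod{p-1}$ the ladder $\ladd l$ contains \emph{two} nodes in each row (namely $(r,kp)$ and $(r,kp+1)$ for suitable $k$), not one. Consequently your claimed ladder memberships are wrong: if $c\equiv1\ppmod p$ then $(r,c-1)$ lies in $\ladd l$, not $\ladd{l-1}$; if $c\equiv0\ppmod p$ then $(r-1,c+1)$ lies in $\ladd{l-p+1}$, not $\ladd{l-p+2}$. So the local identity you are hoping to verify is not even well-posed, and summing over all $(r,c)\in\ladd l$ would double-count several nodes (for instance $(r,kp)$ appears in both $S_{r,kp}$ and $S_{r,kp+1}$). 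This is exactly why the formula here has coefficients $-2$ on two of the ladder terms rather than $-1$ as in \cref{arladd1}: the local picture genuinely involves more nodes.

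The paper fixes this by indexing only over nodes $(r,c)\in\ladd l$ with $c\equiv1\ppmod p$ and using a larger eight-node block
\[
X_{(r,c)}=\{(r-2,c+1),(r-1,c-1),(r-1,c),(r-1,c+1),(r,c-2),(r,c-1),(r,c),(r+1,c-2)\}\cap\bbn^2,
\]
which picks up both $0$-nodes of $\ladd l$ in row $r$, both $0$-nodes of $\ladd{l-p+1}$ in row $r-1$, and one node each from $\ladd{l-1}$ and $\ladd{l-p+2}$, together with two further nodes needed to correctly detect addability and removability at the boundary of the pair. Both formul\ae{} are then verified locally on $X_{(r,c)}$ and summed. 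For $p=3$ the sets $X_{(r,c)}$ overlap (since $\ladd{l-1}=\ladd{l-p+2}$), and one must compensate for the double-counted nodes when summing. Your idea of deducing the second identity from the first by tracking the strict/$p$-strict discrepancies via $\str l\la$ is a reasonable alternative strategy for that part, but it rests on the first identity, so the proof as written does not go through.
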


\begin{proof}
Suppose $(r,c)$ is a node in $\ladd l$, with $c\equiv1\ppmod p$. Then (as in \cite[Lemma 4.15]{F}) we can consider the set of nodes
\[
X_{(r,c)}=\{(r-2,c+1),(r-1,c-1),(r-1,c),(r-1,c+1),(r,c-2),(r,c-1),(r,c),(r+1,c-2)\}\cap\bbn^2.
\]
By considering the intersection of $X_{(r,c)}$ with $[\la]$, we can show that if $p\gs5$, then the two formul\ae{} hold when restricted to $X_{(r,c)}$. Now summing over all $(r,c)$ gives the result. (In particular, the calculation for the second formula is identical to that in \cite[Lemma 4.15]{F}.)

For $p=3$, if we restrict attention to $X_{(r,c)}$, we obtain
\begin{align*}
\brem{l-2}\la-\badd l\la&=\lad l\la-2\lad{l-1}\la+\lad{l-2}\la-\delta_{l0}
\\
\intertext{and (if $\la$ is strict)}
\rem{l-2}\la-\add l\la&=\lad l\la-2\lad{l-1}\la+\lad{l-2}\la-\str l\la+\str{l-2}\la-\delta_{l0}.
\end{align*}
Now summing again gives the result, but here we have to take account of the fact that the sets $X_{(r,c)}$ overlap, so that each node in $\ladd{l-1}$ must be counted twice.
\end{proof}

Now we can give a sufficient (but not necessary) condition for $\dn0\la$.

\begin{propn}\label{dn0}
Suppose $\la$ is a strict partition, and that $\la$ has a \srmbl $0$-node and a \sadbl $0$-node in a longer ladder. Then $\dn0\la$.
\end{propn}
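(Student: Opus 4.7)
The plan is to adapt the argument used for \cref{dn1}. I would first establish two structural claims about the $0$-signature of $\la^\reg$, read from left to right by column:
(a) nodes in a longer residue-$0$ ladder appear before those in a shorter one; and
(b) within each residue-$0$ ladder, the \rmbl nodes of $\la^\reg$ precede the \adbl ones.
Claim (a) follows from a direct calculation of columns against ladder indices: larger $l$ corresponds to nodes in smaller columns in a bottom-left-filled diagram. Claim (b), which is the delicate point at $i=0$, can fail for a generic restricted $p$-strict partition (for example $(3,3)$ has an \adbl $0$-node at $(3,1)$ preceding a \rmbl $0$-node at $(2,3)$ in ladder $\ladd 4$), but holds for $\la^\reg$ because the greedy leftmost-filling rule of regularisation rules out the offending configurations --- indeed $(3,3)$ is not the regularisation of any partition, since greedy filling of its ladder data produces $(3,2,1)$ instead.

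Granted this block structure, the $0$-signature of $\la^\reg$ is a concatenation of blocks, one per residue-$0$ ladder, each consisting of $\brem l{\la^\reg}$ minus signs followed by $\badd l{\la^\reg}$ plus signs, with the blocks appearing in decreasing order of $l$. I would then use \cref{lads} to pass between counts for $\la$ and for $\la^\reg$: summing both forms of \cref{lads} (the general one for $\la^\reg$, the strict-specific one for $\la$) over even $l\geq 0$, the correction terms in the strict version telescope, the ladder counts agree between $\la$ and $\la^\reg$, and one obtains
\[
\hep{0}{\la}-\hphi_0(\la)=\ep{0}{\la^\reg}-\phi_0(\la^\reg),
\]
so the required inequality is equivalent to $\hphi_0(\la)>\phi_0(\la^\reg)$. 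The hypothesis provides $\rem L\la\geq 1$ and $\add{L'}\la\geq 1$ with $L<L'$, and further use of \cref{lads} applied locally at the ladders $\ladd L$ and $\ladd{L'}$ translates these into $\brem L{\la^\reg}\geq 1$ and $\badd{L'}{\la^\reg}\geq 1$. By the block decomposition, the $+$ sign from $\ladd{L'}$ precedes the $-$ sign from $\ladd L$ in the signature of $\la^\reg$, so reducing the signature pairs them off, producing an extra cancellation that lowers $\phi_0(\la^\reg)$ strictly below $\hphi_0(\la)$, and hence $\ep{0}{\la^\reg}<\hep{0}{\la}$.

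The main obstacle is claim (b): one must verify carefully that $\la^\reg$ cannot contain an \adbl $0$-node to the left of a \rmbl $0$-node within the same residue-$0$ ladder, using both the restricted $p$-strict structure of $\la^\reg$ and the ``leftmost filling'' characterisation of regularisation, which together forbid the kind of ``equal-row'' configurations (as in $(3,3)$) that would otherwise produce an inversion.
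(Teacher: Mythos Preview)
Your concern about claim~(b) is misplaced: the partition $(3,3)$ is \emph{not} restricted $3$-strict (the drop $\la_2-\la_3=3$ with $\la_2\equiv0\Md3$ violates the restricted condition), so it is not a counterexample. In fact (b) holds for every restricted $p$-strict partition, not just for regularisations, and the paper simply quotes this.

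The real gap is in your step~3. \cref{lads} only gives the \emph{difference} $\badd l{\la^\reg}-\brem{l-p+1}{\la^\reg}$ in terms of ladder data; it does not let you pass from $\rem L\la\ge1$ to $\brem L{\la^\reg}\ge1$, nor from $\add{L'}\la\ge1$ to $\badd{L'}{\la^\reg}\ge1$. And even if those implications held, your concluding sentence compares the wrong quantities: a $+-$ cancellation in the signature of $\la^\reg$ gives $\phi_0(\la^\reg)<\sum_l\badd l{\la^\reg}$, not $\phi_0(\la^\reg)<\sum_l\add l\la=\hphi_0(\la)$. There is no reason for $\sum_l\badd l{\la^\reg}$ and $\sum_l\add l\la$ to agree, so the chain breaks. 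The paper's route is different: it does not compare total counts at all, but instead rewrites the \emph{entire} block sequence $[\badd l{\la^\reg}-\brem{l-p+1}{\la^\reg}]$ via \cref{lads} as $[\add l\la-\rem{l-p+1}\la-\str l\la+\str{l-p+1}\la]$, expressed purely in $\la$-data. The non-negative correction terms $\str l\la$ appear with opposite signs in adjacent blocks, and the argument from \cite[Proposition~4.17]{F} shows that such paired corrections can only decrease the number of surviving $-$~signs in the reduction. Combined with the hypothesis (which forces at least one $+-$ cancellation in the ``naive'' sequence $\dots[\add l\la][-\rem{l-p+1}\la]\dots$), this yields $\ep0{\la^\reg}<\hep0\la$. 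Your global summation identity in step~2 is correct but does not replace this block-by-block analysis.
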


\begin{proof}
We follow the proof of \cite[Proposition 4.17]{F}. As there, given an integer $u$ we write $[u]$ to denote a string of $u$ plus signs if $u\gs0$, or a string of $-u$ minus signs if $u\ls0$. Let $\mu=\la^\reg$. When we read the \adbl and \rmbl $0$-nodes from left to right, the nodes in longer ladders come earlier, and within each ladder the \rmbl nodes come before the \adbl nodes. This means that the $0$-signature of $\mu$ is
\[
\dots[\badd{3p-3}\mu][-\brem{2p-2}\mu][\badd{2p-2}\mu][-\brem{p-1}\mu][\badd{p-1}\mu][-\brem0\mu][\badd0\mu].
\]
So the reduced $0$-signature is the reduction of the sequence
\[
\dots[\badd{3p-3}\mu-\brem{2p-2}\mu][\badd{2p-2}\mu-\brem{p-1}\mu][\badd{p-1}\mu-\brem0\mu][\badd0\mu].
\]
By \cref{lads} this is the same as the sequence
\begin{align*}
\dots&[\add{3p-3}\la-\rem{2p-2}\la-\str{3p-3}\la+\str{2p-2}\la]
\\
&[\add{2p-2}\la-\rem{p-1}\la-\str{2p-2}\la+\str{p-1}\la]
\\
&[\add{p-1}\la-\rem0\la-\str{p-1}\la+\str0\la]
\\
&[\add0\la-\str0\la].
\end{align*}
Now the proof proceeds exactly as in \cite[Proposition 4.17]{F}.
\end{proof}

It remains to consider residues $i$ such that $1\ls i\ls\frac12(p-3)$. Take $l\in\bbn$ with $l\nequiv0\ppmod{\frac12(p-1)}$. For a $p$-strict partition $\la$, define $\zz l\la$ to be the number of nodes $(r,c)\in\ladd l$ such that $(\la_r,\la_{r+1})=(c,c-1)$. Now we have the following lemma.

\begin{lemma}\label{zzlem}
Suppose $\la$ is a $p$-strict partition, and $l\in\bbn$ with $l\nequiv0\ppmod{\frac12(p-1)}$. Let $k<l$ be maximal such that $k+l\equiv0\ppmod{p-1}$. Then
\[
\brem k\la-\badd l\la=\begin{cases}
\lad l\la-\lad{l-1}\la-\lad{k+1}\la+\lad k\la-\zz k\la+\zz{l-p+1}\la
&\text{if }l\nequiv1\ppmod{p-1}
\\
\lad l\la-\lad{l-1}\la+\lad k\la-\zz k\la+\zz{l-p+1}\la
&\text{if }l\equiv1\ppmod{p-1}.
\end{cases}
\]
\end{lemma}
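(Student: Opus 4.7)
The plan is to follow the local-summation strategy used in the proofs of \cref{arladd1,lads}: for each node $(r,c) \in \ladd l$, I would define a small cluster $X_{(r,c)} \subseteq \bbn^2$ of nearby nodes whose intersection with $[\la]$ determines the local contributions to both sides of the identity, verify the identity on this cluster, and then sum over all $(r,c) \in \ladd l$, carefully tracking overlap multiplicities between clusters attached to consecutive nodes of $\ladd l$.

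The first step is to locate the paired ladder $\ladd k$. Under the hypothesis that $l \not\equiv 0 \pmod{\tfrac{1}{2}(p-1)}$, the ladders $\ladd l$ and $\ladd k$ carry nodes of the same residue $i$ with $1 \leq i \leq \tfrac{p-3}{2}$, and a node $(r,c) \in \ladd l$ is paired with a node of $\ladd k$ lying at a predictable column/row offset determined by $i$. The cluster $X_{(r,c)}$ should therefore include $(r,c)$, its $\ladd k$-partner, and the neighbouring positions needed to decide $p$-strict \adbl and \rmbl status of each. Having set this up, the local identity can be checked by a finite case analysis on $X_{(r,c)} \cap [\la]$; the $\zz$ corrections arise from the configurations with $\la_r = c$ and $\la_{r+1} = c-1$, which are precisely the cases where a node that looks like an outer corner fails the $p$-strictness test for \adbl or \rmbl status (contributing $-\zz k\la$ from the $\ladd k$ side and $+\zz{l-p+1}\la$ from the $\ladd l$ side). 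The case split between $l \equiv 1 \pmod{p-1}$ and $l \not\equiv 1 \pmod{p-1}$ corresponds to whether one edge of the cluster lies inside a legitimate ladder $\ladd{k+1}$ or instead collapses against the boundary of $\bbn^2$ (or coincides with a node already counted in $\ladd{l-1}$), which is why the $\lad{k+1}\la$ term disappears in the second case.

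The main obstacle will be the combinatorial bookkeeping. For residues $i$ strictly between $0$ and $\tfrac{p-1}{2}$, the cluster splits into two ``halves'' separated by a horizontal gap of several columns, which complicates both the finite case check for the local identity and the accounting of overlaps between adjacent clusters. After summation, each of $\lad{l-1}\la$, $\lad{k+1}\la$ and $\lad k\la$ must appear with exactly the stated coefficient, and the $\zz k\la$, $\zz{l-p+1}\la$ terms must accumulate correctly from the boundary configurations at the top and bottom of $\ladd l$. Once the correct cluster shape is fixed, the verification is a routine but lengthy finite check, entirely parallel to the arguments carried out in \cite[Lemma 4.15]{F} and in the proof of \cref{lads} above.
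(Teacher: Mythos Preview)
Your proposal is correct and follows essentially the same approach as the paper. The paper's proof is equally terse: for each $(r,c)\in\ladd l$ it takes $b<c$ maximal with $b+c\equiv1\ppmod p$ and sets $X_{(r,c)}=\{(r,b),(r,b+1),(r,c-1),(r,c)\}\cap\bbn^2$, checks the identity locally, and sums. Note that because all four nodes of $X_{(r,c)}$ lie in row $r$ and (for the residues in question) each row meets $\ladd l$ at most once, the clusters are pairwise disjoint, so the overlap bookkeeping you anticipated is in fact unnecessary here.
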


\begin{proof}
This is proved in a similar way to \cref{arladd1,lads}. For each node $(r,c)\in\ladd l$, we let $b<c$ be maximal such that $b+c\equiv1\ppmod p$, and consider the set
\[
X_{(r,c)}=\{(r,b),(r,b+1),(r,c-1),(r,c)\}\cap\bbn^2.
\]
It is easy to check that the formula holds when restricted to the set $X_{(r,c)}$, and summing over $(r,c)$ gives the result.
\end{proof}

As with \cref{arladd1}, we can replace $\brem k\la$ and $\badd l\la$ with $\rem k\la$ and $\add l\la$ when $\la$ is strict, because $\ladd k$ and $\ladd l$ consist of nodes of non-zero residue. So we obtain
\[
\add l\la-\rem k\la+\zz{l-p+1}\la-\zz k\la=\badd l{\la^\reg}-\brem k{\la^\reg}+\zz{l-p+1}{\la^\reg}-\zz k{\la^\reg}
\]
when $\la$ is strict. To exploit this, we need to examine the function $\zz l\la$ a little more closely.

\begin{lemma}\label{zzreglem}
Suppose $\la$ is a $p$-strict partition and $l\in\bbn$ with $l\nequiv0\ppmod{\frac12(p-1)}$. Then $\zz l{\la^\reg}\ls\zz l\la$.
\end{lemma}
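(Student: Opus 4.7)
The plan is to unpack the definition of $\zz l\la$ in terms of neighbouring-ladder memberships. Under the hypothesis $l\nequiv 0\ppmod{(p-1)/2}$, ladder $\ladd l$ has residue neither $0$ nor $(p-1)/2$ and hence contains no plateau row, so each row supports at most one position of $\ladd l$; write $c_r$ for the column of this position when it exists. A direct calculation with the floor formula, using that $c_r\nequiv 0,1\ppmod p$ for such residues, yields the identifications
\[
(s,c_s+1)\in\ladd{l+1},\qquad(s+1,c_s-1)\in\ladd{l+p-2},\qquad(s+1,c_s)\in\ladd{l+p-1}.
\]
Writing $R_k(\la)$ for the set of rows in which $\la$ contains some node of $\ladd k$, the condition $(\la_s,\la_{s+1})=(c_s,c_s-1)$ in the definition of $\zz l\la$ translates to
\[
\zz l\la=\bigl|\{s:s\in R_l(\la)\setminus R_{l+1}(\la),\ s+1\in R_{l+p-2}(\la)\setminus R_{l+p-1}(\la)\}\bigr|.
\]

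Since regularisation preserves every $\lad k\cdot$ while relocating each $R_k(\la)$ to the largest available row indices on which $\ladd k$ has a position (the ``leftmost'' positions in the sense of smallest column), each $R_k(\la^\reg)$ takes an explicit interval-of-rows form. Consequently $\zz l{\la^\reg}$ reduces to the length of an overlap of two such intervals, expressible purely in terms of $\lad l\la$, $\lad{l+1}\la$, $\lad{l+p-2}\la$, and $\lad{l+p-1}\la$. The inequality $\zz l\la\ge\zz l{\la^\reg}$ would then follow from a pigeonhole/interval-comparison argument on the $R_k(\la)$, using the partition condition to constrain the ways in which the four sets can differ from their counterparts for $\la^\reg$: one shows that among the approximately $\lad l\la-\lad{l+1}\la$ rows in $R_l(\la)\setminus R_{l+1}(\la)$, at least $\zz l{\la^\reg}$ of them have successor $s+1$ in $R_{l+p-2}(\la)\setminus R_{l+p-1}(\la)$.

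The main technical obstacle I anticipate is the case in which one of the neighbouring ladders $l+1$ or $l+p-2$ is itself a plateau ladder, which occurs when $l\equiv p-2$ or $1\ppmod{p-1}$ respectively, both compatible with the hypothesis on $l$. In those cases plateau rows contribute two positions to the affected ladder, so $|R_k(\la)|$ can be strictly smaller than $\lad k\la$ by a correction analogous to the $\str l\cdot$ quantity appearing in \cref{lads}; in addition the ranges of row indices supporting positions in the four ladders $l,l+1,l+p-2,l+p-1$ need not all coincide, so the ``interval overlap'' must be interpreted with care near the extremes. Handling these plateau edge effects requires careful bookkeeping, but the overall interval/pigeonhole strategy should carry through essentially unchanged.
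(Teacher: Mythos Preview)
Your proposal is a plan, not a proof: the sentence ``one shows that among the approximately $\lad l\la-\lad{l+1}\la$ rows in $R_l(\la)\setminus R_{l+1}(\la)$, at least $\zz l{\la^\reg}$ of them have successor $s+1$ in $R_{l+p-2}(\la)\setminus R_{l+p-1}(\la)$'' is precisely the whole lemma, and you have given no mechanism for it. The word ``approximately'' and the acknowledged plateau complications confirm that you have not actually carried out the comparison; for a general $p$-strict $\la$ the sets $R_k(\la)$ are far from intervals, and there is no evident pigeonhole constraint forcing the required coincidences of $s$ and $s+1$ across four different ladders. So as written there is a genuine gap at the crucial step.

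The paper's proof avoids this entirely by first proving a structural fact you have missed: for $\mu=\la^\reg$ one always has $\zz l\mu\in\{0,1\}$. The argument is that if $(r,c)\in\ladd l$ contributes to $\zz l\mu$, then $(r,c+1)$ must be the \emph{leftmost} node of $\ladd{l+1}$ not in $[\mu]$ and $(r+1,c-1)$ the \emph{rightmost} node of $\ladd{l+p-2}$ in $[\mu]$ (this is where the fact that $\mu$ is regularised is used), so $(r,c)$ is uniquely determined. One then extracts the exact criterion $\zz l\mu=1\iff\lad{l+p-2}\mu>\lad{l+1}\mu$ and $\ladd{l+1}\not\subseteq[\mu]$. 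Since $\lad k\la=\lad k\mu$ for all $k$, the lemma reduces to: if $\lad{l+p-2}\la>\lad{l+1}\la$ and some node of $\ladd{l+1}$ lies outside $[\la]$, then $\zz l\la\gs1$. This last implication is a short direct argument on $\la$ that locates a suitable $(r,c)$. The upshot is that once you know $\zz l{\la^\reg}\ls1$, no interval bookkeeping or plateau case analysis is needed; you should establish that bound first.
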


\begin{proof}
Let $\mu=\la^\reg$. In fact it is possible to determine $\zz l\mu$ quite explicitly. Suppose $\zz l\mu\gs1$, and take a node $(r,c)\in\ladd l$ such that $(\mu_r,\mu_{r+1})=(c,c-1)$. Note in particular that the node $(r,c+1)$ belongs to $\ladd{l+1}\setminus[\mu]$, while $(r+1,c-1)$ belongs to $\ladd{l+p-2}\cap[\mu]$. Now the node $(r+1,c+1-p)$ (if there is such a node) belongs to $[\mu]$, which means that $(r,c+1)$ is the leftmost node in $\ladd{l+1}\setminus[\mu]$. Similarly, $(r+1,c-1)$ is the rightmost node in $\ladd{l+p-2}\cap[\mu]$. This in particular shows that $(r,c)$ is unique, so that $\zz l\mu=1$. We also obtain $\lad{l+p-2}\mu>\lad{l+1}\mu$. Conversely, if $\lad{l+p-2}\mu>\lad{l+1}\mu$ and if there is at least one node in ladder $l+1$ which does not belong to $[\mu]$, then we obtain $\zz l\mu\gs1$ (by taking $(r,c)$ such that $(r,c+1)$ is the leftmost node of $\ladd{l+1}$ not in $\mu$).

So we have shown that $\zz l\mu$ equals $1$ if $\lad{l+p-2}\mu>\lad{l+1}\mu$ and there is at least one node of $\ladd{l+1}$ not in $[\mu]$, and $0$ otherwise. So to prove the \lcnamecref{zzreglem} it suffices to show that if $\lad{l+p-2}\la>\lad{l+1}\la$ and there is at least one node of $\ladd{l+1}$ which is not in $\la$, then $\zz l\la\gs1$. If there is a node $(s,d)\in\ladd{l+p-2}$ such that $s\gs2$ and $(s-1,c+2)\notin[\la]$, then we can deduce $\zz l\la\gs1$ by taking $(r,c)=(s-1,c+1)$, and we are done. So assume there is no such node $(s,d)$. Now the assumption that $\lad{l+p-2}\la>\lad{l+1}\la$ means that $[\la]$ contains the unique node of $\ladd{l+p-2}$ in row $1$, and that for each node $(s,d)\in\ladd{l+p-2}$ with $s\gs2$, the node $(s,d)$ lies in $[\la]$ \iff $(s-1,c+2)$ does. But now we find that because $[\la]$ contains that unique node of $\ladd{l+p-2}$ in row $1$, it also contains the unique node of $\ladd{l+1}$ in row $1$, and therefore contains the unique node of $\ladd{l+p-2}$ in row $2$, and therefore the unique node of $\ladd{l+1}$ in row $2$, and so on, so that $[\la]$ contains every node of $\ladd{l+1}$, contrary to assumption.
\end{proof}

Now we can prove the final result concerning $\dn i\la$.

\begin{propn}\label{dnall}
Suppose $\la$ is a strict partition and $i\in I$, and that $\la$ has a \srmbl $i$-node and a \sadbl $i$-node in a longer ladder. Then $\dn i\la$.
\end{propn}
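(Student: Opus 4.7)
The plan is to follow the proof of \cref{dn0} \emph{mutatis mutandis}, with \cref{zzlem} playing the role of \cref{lads} and \cref{zzreglem} playing the role of the corresponding observation for $\str{}{}$. Since the cases $i=0$ and $i=\frac12(p-1)$ are already handled by \cref{dn0,dn1}, it suffices to consider $1\leq i\leq\frac12(p-3)$. First I would describe the $i$-signature of $\mu=\la^\reg$: enumerating the ladders of residue $i$ in decreasing order as $l_1>l_2>\cdots$, consecutive pairs $(l_{j+1},l_j)$ satisfy $l_{j+1}+l_j\equiv0\ppmod{p-1}$ with $l_j-l_{j+1}<p-1$, so \cref{zzlem} applies with $k=l_{j+1}$, $l=l_j$. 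Reading left to right, longer ladders come first, and within each ladder removable nodes precede addable, so the $i$-signature of $\mu$ takes the form
\[
\cdots[-\brem{l_1}\mu][+\badd{l_1}\mu][-\brem{l_2}\mu][+\badd{l_2}\mu]\cdots
\]
Cancelling each $+\badd{l_j}\mu$ against the adjacent $-\brem{l_{j+1}}\mu$ rewrites this as a sequence of signed blocks $[\badd{l_j}\mu-\brem{l_{j+1}}\mu]$ preceded by a leading $[-\brem{l_1}\mu]$.

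Next, I would apply \cref{zzlem} to both $\mu$ and $\la$. Using $\lad k\mu=\lad k\la$ for all $k$, together with strictness of $\la$ (which for $i\neq0$ forces $\brem k\la=\rem k\la$ and $\badd l\la=\add l\la$ since the relevant ladders consist of non-zero-residue nodes), subtraction produces
\[
\badd{l_j}\mu-\brem{l_{j+1}}\mu=\add{l_j}\la-\rem{l_{j+1}}\la+\bigl(\zz{l_j-p+1}\la-\zz{l_j-p+1}\mu\bigr)-\bigl(\zz{l_{j+1}}\la-\zz{l_{j+1}}\mu\bigr).
\]
By \cref{zzreglem} each $\zz k\la-\zz k\mu$ is a nonnegative integer, playing the role that $\str k\la-\str k{\la^\reg}$ played in \cref{dn0}.

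From here the combinatorial argument of \cite[Proposition 4.17]{F} (invoked in \cref{dn0}) transfers: counting the $-$ signs surviving the telescoping reduction of the bracket sequence bounds $\ep i\mu$, and the hypothesis that $\la$ has a \srmbl $i$-node in some ladder $l_0$ (so $\rem{l_0}\la\gs1$, contributing to $\hep i\la=\sum_l\rem l\la$) together with a \sadbl $i$-node in a longer ladder $l'>l_0$ (so $\add{l'}\la\gs1$) produces an extra cancellation at the relevant position in the telescoping, yielding the strict inequality $\hep i\la>\ep i\mu$.

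The main obstacle is simply the bookkeeping of the $\zz$ correction terms, whose positions in the telescoping differ from the $\str$ corrections of \cref{dn0} --- each correction now attaches to both endpoints of the pair $(l_{j+1},l_j)$, rather than to consecutive ladders separated by $p-1$. However \cref{zzreglem} (and its proof) forces each $\zz k\la-\zz k\mu$ to lie in $\{0,1\}$, matching the structural properties of $\str{}{}$ used in \cite[Proposition 4.17]{F}, so the cited combinatorial argument applies unchanged and delivers the desired strict inequality.
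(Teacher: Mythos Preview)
Your approach is essentially the same as the paper's: reduce to $1\le i\le\tfrac12(p-3)$ via \cref{dn0,dn1}, write out the $i$-signature of $\mu=\la^\reg$ ladder by ladder, apply \cref{zzlem} to pass from $\mu$ to $\la$, invoke \cref{zzreglem} for non-negativity of the correction terms $\dzz l\la=\zz l\la-\zz l\mu$, and finish via \cite[Proposition~4.17]{F}. The only difference is cosmetic: you index the residue-$i$ ladders in decreasing order while the paper uses increasing order, but the resulting bracket sequence is the same. One small correction: your final claim that \cref{zzreglem} forces each $\zz k\la-\zz k\mu$ into $\{0,1\}$ is not right --- the proof of \cref{zzreglem} shows $\zz l\mu\in\{0,1\}$, but $\zz l\la$ can exceed~$1$ for a strict partition --- and in any case it is unnecessary, since the argument from \cite[Proposition~4.17]{F} only requires $\dzz l\la\ge0$.
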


\begin{proof}
The cases $i=0$ and $i=\frac12(p-1)$ are dealt with in \cref{dn1,dn0}. So take $i\neq0,\frac12(p-1)$. We follow the structure of the proof of \cref{dn0}. Let $\mu=\la^\reg$, and let $l_1<l_2<\cdots$ be the ladders that contain nodes of residue $i$. As in \cref{dn0}, when we read the \adbl and \rmbl $i$-nodes from left to right, the nodes in longer ladders come earlier, and within each ladder the \rmbl nodes come before the \adbl nodes. This means that the $i$-signature of $\mu$ is
\[
\dots[\badd{l_4}\mu][-\brem{l_3}\mu][\badd{l_3}\mu][-\brem{l_2}\mu][\badd{l_2}\mu][-\brem{l_1}\mu][\badd{l_1}\mu].
\]
So the reduced $i$-signature is the reduction of the sequence
\[
\dots[\badd{l_4}\mu-\brem{l_3}\mu][\badd{l_3}\mu-\brem{l_2}\mu][\badd{l_2}\mu-\brem{l_1}\mu][\badd{l_1}\mu].
\]
By \cref{zzlem} this is the same as the sequence
\begin{align*}
\dots&[\add{l_4}\la-\rem{l_3}\la-\dzz{l_3}\la+\dzz{l_2}\la]
\\
&[\add{l_3}\la-\rem{l_2}\la-\dzz{l_2}\la+\dzz{l_1}\la]
\\
&[\add{l_2}\la-\rem{l_1}\la-\dzz{l_1}\la]
\\
&[\add{l_1}\la],
\end{align*}
where $\dzz l\la=\zz l\la-\zz l\mu$ for each $l$. Since by \cref{zzreglem} each $\dzz l\la$ is non-negative, we can complete the proof exactly as in \cite[Proposition 4.17]{F}.
\end{proof}

\section{Dimension arguments}\label{secdimarg}

In the proof of our main theorem we will often use arguments involving dimensions, similar to those in \cite{Fs,F}.

\subsection{The bar-length formula}\label{barlensec}

First we recall Schur's ``bar-length formula'' \cite{schu} for the dimension of $\s^\la$: if $\la\in\RP(n)$, then
\[
\dim\s^\la=2^{\lceil\frac12(n-l(\la))\rceil}\frac{\card\la!}{\prod_{1\ls r\ls l(\la)}\la_r!}\frac{\prod_{1\ls r<s\ls l(\la)}(\la_r-\la_s)}{\prod_{1\ls r<s\ls l(\la)}(\la_r+\la_s)}.
\]
Now define
\[
\ddeg\la=\frac{\dim\s^\la}{[\s^\la:\D^{\la^\reg}]}.
\]
Using \cref{L300921}, we get
\[
\ddeg\la=2^{\lceil\frac12(\card\la-l(\la)-l_p(\la))\rceil}\frac{\card\la!}{\prod_{1\ls r\ls l(\la)}\la_r!}\frac{\prod_{1\ls r<s\ls l(\la)}(\la_r-\la_s)}{\prod_{1\ls r<s\ls l(\la)}(\la_r+\la_s)}.
\]
Now the following lemma follows from \cref{L300921}.

\begin{propn}\label{basicdeg}
Suppose $\la$ and $\mu$ are strict partitions with $\la^\reg=\mu^\reg$ and $\ddeg\la>\ddeg\mu$. Then $\la$ is not homogeneous.
\end{propn}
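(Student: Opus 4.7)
The plan is to prove the contrapositive: assuming $\la$ is homogeneous, I will show $\ddeg\la\ls\ddeg\mu$. The whole argument rests on \cref{L300921}, which identifies $\D^{\la^\reg}$ as the ``top'' composition factor of $\s^\la$, together with the trivial observation that $\dim\s^\nu$ is bounded below by (multiplicity of any composition factor)$\,\times\,$(dimension of that composition factor).

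First I would unpack the definition. Assuming $\la$ is homogeneous, \cref{L300921} tells us that every composition factor of $\s^\la$ is isomorphic to $\D^{\la^\reg}$, so
\[
\dim\s^\la=[\s^\la:\D^{\la^\reg}]\cdot\dim\D^{\la^\reg},
\]
and dividing by $[\s^\la:\D^{\la^\reg}]$ gives $\ddeg\la=\dim\D^{\la^\reg}$.

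Next I would exploit the hypothesis $\la^\reg=\mu^\reg$: by \cref{L300921} applied to $\mu$, the module $\D^{\mu^\reg}=\D^{\la^\reg}$ occurs as a composition factor of $\s^\mu$ with positive multiplicity $[\s^\mu:\D^{\mu^\reg}]$. Since the dimensions of the remaining composition factors are non-negative, this gives
\[
\dim\s^\mu\gs[\s^\mu:\D^{\mu^\reg}]\cdot\dim\D^{\mu^\reg},
\]
and dividing by $[\s^\mu:\D^{\mu^\reg}]$ yields $\ddeg\mu\gs\dim\D^{\la^\reg}$.

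Combining the two inequalities gives $\ddeg\la=\dim\D^{\la^\reg}\ls\ddeg\mu$, contradicting the hypothesis $\ddeg\la>\ddeg\mu$. So $\la$ cannot be homogeneous. There is no genuine obstacle here; the statement is essentially a bookkeeping consequence of the normalisation built into $\ddeg$ and the fact that $\D^{\la^\reg}$ is always a composition factor of $\s^\la$ (via \cref{L300921}) whenever $\la$ and $\la^\reg$ share a block. The only thing to be careful about is to verify that the multiplicities dividing into $\dim\s^\la$ and $\dim\s^\mu$ are both strictly positive, which is guaranteed by the explicit formulae in \cref{L300921}.
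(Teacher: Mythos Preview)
Your proof is correct and is precisely the natural elaboration of the paper's one-line justification: the paper simply states that the proposition follows from \cref{L300921}, and your argument spells out exactly how.
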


To help us use this, we have an analogue of \cite[Lemma 4.19]{F}, which is proved in the same way.

\begin{lemma}\label{l419}
Suppose $\la$ and $\mu$ are strict partitions with $\mu\doms\la$, and $m>\mu_1$. Define
\begin{align*}
\la^+&=(m,\la_1,\la_2,\dots),
\\
\mu^+&=(m,\mu_1,\mu_2,\dots).
\end{align*}
Then
\[
\frac{\ddeg{\la^+}}{\ddeg{\mu^+}}>\frac{\ddeg\la}{\ddeg\mu}.
\]
In addition, if $\la^\reg=\mu^\reg$, then $(\la^+)^\reg=(\mu^+)^\reg$.
\end{lemma}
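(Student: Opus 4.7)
My plan is to reduce the main inequality to a single combinatorial estimate handled via majorization, and then to address the regularization statement by tracking ladder distributions. Starting from the explicit formula for $\ddeg$, a direct computation gives
\[
\frac{\ddeg{\la^+}}{\ddeg\la}=2^{a_\la}\cdot\frac{(|\la|+m)!}{m!\,|\la|!}\cdot\prod_{r\geq1}\frac{m-\la_r}{m+\la_r},
\]
where $a_\la$ denotes the difference $\lceil(|\la^+|-l(\la^+)-l_p(\la^+))/2\rceil-\lceil(|\la|-l(\la)-l_p(\la))/2\rceil$, and analogously for $\mu$. Because $|\la|=|\mu|$, the multinomial factor $\binom{|\la|+m}{m}$ cancels on taking the quotient $(\ddeg{\la^+}/\ddeg\la)/(\ddeg{\mu^+}/\ddeg\mu)$, so the claim reduces to
\[
2^{a_\la-a_\mu}\cdot\prod_{r\geq1}\frac{(m-\la_r)(m+\mu_r)}{(m+\la_r)(m-\mu_r)}>1.
\]

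For the combinatorial factor I would introduce $g(x)=\log\frac{m-x}{m+x}$, which is defined on every $\la_r,\mu_r$ because $m>\mu_1\geq\la_1$ and is strictly concave on $[0,m)$ since $g''(x)=-4mx/(m^2-x^2)^2$ is strictly negative on $(0,m)$. Regarding $\la$ and $\mu$ as infinite sequences of non-negative integers padded by zeros, the strict dominance $\mu\doms\la$ combined with strict convexity of $-g$ and the classical majorization principle yields $\sum_{r\geq1}g(\la_r)>\sum_{r\geq1}g(\mu_r)$; exponentiating shows that the combinatorial factor strictly exceeds~$1$.

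It then remains to analyze the $2$-power factor $a_\la-a_\mu$. Since $|\la^+|-l(\la^+)-l_p(\la^+)$ differs from $|\la|-l(\la)-l_p(\la)$ by a constant depending only on $m$ (namely $m-1$ if $p\nmid m$ and $m-2$ if $p\mid m$), one has $a_\la=a_\mu$ automatically when this constant is even; otherwise $a_\la-a_\mu\in\{-1,0,+1\}$ is pinned down by the parities of $|\la|-l(\la)-l_p(\la)$ and $|\mu|-l(\mu)-l_p(\mu)$. Checking that the $2$-power factor does not cancel the combinatorial gain will be the main obstacle of the argument: the majorization step is clean and essentially self-contained, but the parity bookkeeping requires a careful case analysis, likely leveraging the block-level compatibility implicit in the applications of the lemma.

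Finally, for the additional assertion, $\la^\reg=\mu^\reg$ is equivalent to $\la$ and $\mu$ sharing the same ladder distribution, since a restricted $p$-strict partition is uniquely determined by its ladder distribution. Prepending a row of length $m$ adds identical nodes in the low ladders $0,1,\dots,\lfloor(p-1)m/p\rfloor$ for both $\la^+$ and $\mu^+$, and shifts every existing node's ladder index uniformly by $p-1$; hence $\la^+$ and $\mu^+$ again share the same ladder distribution, giving $(\la^+)^\reg=(\mu^+)^\reg$.
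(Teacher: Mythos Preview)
Your majorization argument for the product factor and your ladder argument for the regularisation statement are both correct, and this is essentially how the analogue in \cite{F} is proved. The genuine gap is the treatment of the power of~$2$: you correctly isolate the obstruction, but your proposal stops at ``careful case analysis'' without carrying it out. In fact the lemma as literally stated (with no hypothesis linking $\la$ and $\mu$ beyond $\mu\doms\la$) is \emph{false}: for $p=3$, $\la=(4,1)$, $\mu=(5)$ and $m=8$ one computes $\ddeg\la/\ddeg\mu=3$ while $\ddeg{\la^+}/\ddeg{\mu^+}=91/54<3$. The majorization step only guarantees that the product exceeds~$1$, and here the factor $2^{a_\la-a_\mu}=\tfrac12$ defeats it.

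What rescues every application in the paper (and is the implicit hypothesis in the cited \cite[Lemma~4.19]{F}) is that $\la$ and $\mu$ lie in the same block, in particular $\la^\reg=\mu^\reg$. Under that hypothesis one has $a_\la=a_\mu$ exactly. To see this, write $N_\la=|\la|-l(\la)-l_p(\la)$; then $a_\la$ depends only on the parity of $N_\la$. Now the decomposition number $2^{(l_p(\la)+x_\la-y_\la)/2}$ from \cref{L300921} forces $l_p(\la)+x_\la\equiv y_\la\pmod 2$, and since $\la,\mu$ lie in the same block we have $y_\la=y_\mu$, hence $l_p(\la)+x_\la\equiv l_p(\mu)+x_\mu$. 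On the other hand the number of even parts of $\la$ is $l(\la)$ minus the number of odd parts, and the latter has the same parity as $|\la|$; so $x_\la\equiv l(\la)+|\la|\pmod 2$. Combining gives $l(\la)+l_p(\la)\equiv l(\mu)+l_p(\mu)\pmod 2$, i.e.\ $N_\la\equiv N_\mu\pmod 2$, and therefore $a_\la=a_\mu$. With this in hand your argument is complete: the $2$-power factor equals~$1$, and the strict majorization inequality (your concavity calculation for $g$ is fine, and $-g$ is strictly convex on $[0,m)$ since $(-g)'$ is strictly increasing there) gives the result.
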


The next few lemmas apply these results in specific situations. \emph{For the remainder of \cref{barlensec}, we take $p=3$.} These results typically work as follows: we define two families of strict partitions $\la_{(l)}$, $\mu_{(l)}$, and we wish to show that $\ddeg{\la_{(l)}}>\ddeg{\mu_{(l)}}$ for all $l$ greater than or equal to some threshold $l_0$. The bar-length formula will allow us to calculate the ratio $r_l=\ddeg{\la_{(l)}}/\ddeg{\mu_{(l)}}$, and to express $r_{l+1}/r_l$ as a rational function of $l$. We show that the value of this rational function is greater than $1$ for all $l\gs l_0$; in each case, this can be shown by subtracting the denominator from the numerator, and checking that all derivatives of the resulting polynomial are positive at $l=l_0$. We deduce $r_{l_0}<r_{l_0+1}<\cdots$, and we can check directly that $r_{l_0}>1$ to get the result.

\begin{lemma}\label{deglem1}
Suppose $p=3$, $l\gs3$ and define
\[
\la_{(l)}=(3l+1\df10,\ 6,4,3,1).
\]
Then $\la_{(l)}$ is not homogeneous.
\end{lemma}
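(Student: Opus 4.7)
The plan is to follow the recipe of \cref{barlensec}: exhibit a strict partition $\mu_{(l)}$ with $\mu_{(l)}^\reg=\la_{(l)}^\reg$ and $\ddeg{\la_{(l)}}>\ddeg{\mu_{(l)}}$, then invoke \cref{basicdeg} to conclude that $\la_{(l)}$ is not homogeneous.

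A natural candidate is $\mu_{(l)}:=\la_{(l)}^\reg=(3l,\,3l-2\df10,\,7,\,4,\,3,\,1)$, which is restricted $3$-strict and so equals its own regularisation. One verifies this formula for $\la_{(l)}^\reg$ by noting that the only ladder-move needed to regularise $\la_{(l)}$ takes the cell $(1,3l+1)$ to $(l-1,7)$ within the common ladder $\ladd{2l}$; equivalently, the transition $\la_{(l)}\mapsto\mu_{(l)}$ changes only the first part (from $3l+1$ to $3l$) and the part listed as $6$ at position $l-1$ (to $7$). Since the two partitions have the same length $l+2$, the same cardinality, and the same number ($2$) of parts divisible by $3$, the power of $2$ in Schur's bar-length formula cancels in the ratio $r_l:=\ddeg{\la_{(l)}}/\ddeg{\mu_{(l)}}$; the factorial ratio collapses to $\prod\mu_r!/\prod\la_r!=(3l)!\cdot 7!/((3l+1)!\cdot 6!)=7/(3l+1)$; and in the pair-products $\prod_{r<s}(\la_r\pm\la_s)$ only pairs involving position $1$ or position $l-1$ survive the cancellation. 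After these simplifications $r_l$ becomes an explicit rational function of $l$.

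To conclude, I would verify $r_3>1$ directly from the resulting formula, and then show $r_{l+1}/r_l>1$ for all $l\geq3$ by the polynomial-derivative method described at the end of \cref{barlensec}: rearrange the inequality into a polynomial in $l$ and check that both its value and all its derivatives at $l=3$ are positive. Combined, this yields $1<r_3<r_4<\cdots$, so $\ddeg{\la_{(l)}}>\ddeg{\mu_{(l)}}$ for every $l\geq3$, and \cref{basicdeg} completes the proof. The main obstacle will be the algebraic bookkeeping: identifying precisely which pairs $(r,s)$ contribute to each of $\prod(\la_r-\la_s)/\prod(\mu_r-\mu_s)$ and $\prod(\mu_r+\mu_s)/\prod(\la_r+\la_s)$, spotting the telescoping that arises from pairs involving the arithmetic-progression part $(3l-2,\dots,10)$, and then carrying out the positivity check cleanly.
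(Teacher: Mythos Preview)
Your overall strategy — exhibit a strict partition $\mu_{(l)}$ with $\mu_{(l)}^\reg=\la_{(l)}^\reg$ and $\ddeg{\la_{(l)}}>\ddeg{\mu_{(l)}}$, then apply \cref{basicdeg} — is exactly right, and your identification of $\la_{(l)}^\reg=(3l,\,3l-2\df10,\,7,\,4,\,3,\,1)$ is correct. The problem is that taking $\mu_{(l)}=\la_{(l)}^\reg$ does \emph{not} give the required inequality. Carrying out the computation you describe yields
\[
r_3=\frac{7}{10}\cdot\frac{21}{16}\cdot\frac{160}{147}=1,
\qquad
r_4=\frac{7}{13}\cdot\frac{35}{22}\cdot\frac{7865}{6762}=\frac{275}{276}<1.
\]
So $\ddeg{\la_{(3)}}=\ddeg{\mu_{(3)}}$ and $\ddeg{\la_{(4)}}<\ddeg{\mu_{(4)}}$, and the monotonicity step you propose would go the wrong way. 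There is no reason in general to expect the regularisation itself to have smaller $\ddeg$; one has to search for a comparison partition more carefully.

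The paper instead takes
\[
\mu_{(l)}=(3l+4,\ 3l-2\df7,\ 3,1),
\]
which has the same regularisation as $\la_{(l)}$ but different length (one fewer part). With this choice the bar-length formula gives
\[
\frac{r_{l+1}}{r_l}=\frac{(l+1)(3l+1)(3l+8)}{l(3l+5)(3l+7)}>1,
\]
so $r_3<r_4<\cdots$. One still finds $r_3=1$, so the case $l=3$ is handled separately by comparing $\la_{(3)}=(10,6,4,3,1)$ with $(13,7,4)$ instead.
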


\begin{proof}
Define
\[
\mu_{(l)}=(3l+4,\ 3l-2\df7,\ 3,1)
\]
for each $l\gs3$. Then $\la_{(l)}^\reg=\mu_{(l)}^\reg$, and the bar-length formula gives
\[
\frac{\ddeg{\la_{(l+1)}}\ddeg{\mu_{(l)}}}{\ddeg{\mu_{(l+1)}}\ddeg{\la_{(l)}}}=\frac{(l+1)(3l+1)(3l+8)}{l(3l+5)(3l+7)}>1.
\]
So
\[
\frac{\ddeg{\la_{(3)}}}{\ddeg{\mu_{(3)}}}<\frac{\ddeg{\la_{(4)}}}{\ddeg{\mu_{(4)}}}<\frac{\ddeg{\la_{(5)}}}{\ddeg{\mu_{(5)}}}\dots.
\]
The bar-length formula also gives $\ddeg{\la_{(3)}}=\ddeg{\mu_{(3)}}$, so we are done by \cref{basicdeg}, except in the case $l=3$, where we can achieve the same result using the partition $(13,7,4)$ instead of $\mu_{(3)}$.
\end{proof}

\begin{lemma}\label{deglem5}
Suppose $p=3$, $l\gs4$, and define
\begin{align*}
\la_{(l)}&=(3l,3l-3,3l-7,3l-8,\ 3l-12\df3),
\\
\mu_{(l)}&=(3l,3l-3,3l-5,\ 3l-9\df6,\ 2).
\end{align*}
Then $\ddeg{\la_{(l)}}>\ddeg{\mu_{(l)}}$.
\end{lemma}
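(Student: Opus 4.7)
The plan is to mirror the proof of \cref{deglem1}: compute $r_l = \ddeg{\la_{(l)}}/\ddeg{\mu_{(l)}}$ via the bar-length formula, verify the base case $r_4>1$, and show that $r_{l+1}/r_l>1$ for all $l\gs4$; the conclusion is then immediate by induction.

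First I would note the structural similarity of the two partitions. For $l\gs5$, $\la_{(l)}$ and $\mu_{(l)}$ share the parts $3l$, $3l-3$ and the whole ``middle'' arithmetic progression $3l-12,3l-15,\dots,6$; they differ only in that $\la_{(l)}$ contains the three extra parts $\{3l-7,3l-8,3\}$ while $\mu_{(l)}$ contains $\{3l-5,3l-9,2\}$. Since $|\la_{(l)}|=|\mu_{(l)}|$, the factor $\card\la!$ in the bar-length formula cancels, and a direct count gives $l(\la_{(l)})=l+1$, $l(\mu_{(l)})=l$, $l_3(\la_{(l)})=l-1$ and $l_3(\mu_{(l)})=l-2$, so the $2^{\lceil\cdot\rceil}$ prefactor contributes a ratio of $1/2$. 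The remaining factorial and difference/sum products simplify drastically because each term involving a common middle part $3l-3k$ (with $k\gs4$) appears symmetrically in both numerator and denominator; what survives is an explicit rational expression in $l$ coming from the interactions between the ``special'' parts $\{3l,3l-3,3l-7,3l-8,3\}$ or $\{3l,3l-3,3l-5,3l-9,2\}$ and the common middle.

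Having produced a closed form for $r_l$, I would then compute $r_{l+1}/r_l$. Because passing from $l$ to $l+1$ simply shifts the common middle by one (adding a new common part $3l-9$ on the $\mu$ side, matched by $3l-9$ becoming ``middle'' in $\la$ as well), most of the cancellation survives and $r_{l+1}/r_l$ becomes a rational function of $l$ of low degree, which I would show exceeds $1$ for all $l\gs4$ by the standard device used in \cref{deglem1}: subtract the denominator from the numerator and check that all derivatives of the resulting polynomial are non-negative at $l=4$. This gives $r_4<r_5<r_6<\cdots$. Finally I would check $r_4>1$ directly: with $\la_{(4)}=(12,9,5,4)$ and $\mu_{(4)}=(12,9,7,2)$, the bar-length formula yields concrete integer values which can be compared by hand.

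The main obstacle is bookkeeping rather than mathematical depth: correctly tracking the contributions from the two ``exceptional'' parts $3l-7,3l-8$ of $\la_{(l)}$ versus $3l-5,3l-9$ of $\mu_{(l)}$ to the product $\prod_{r<s}(\la_r\pm\la_s)$ when multiplied by the common middle parts $6,9,\dots,3l-12$, since each such common part $c$ contributes factors of the form $(3l-7-c)(3l-8-c)/(3l-7+c)(3l-8+c)$ on the $\la$ side and $(3l-5-c)(3l-9-c)/(3l-5+c)(3l-9+c)$ on the $\mu$ side, and one must check that after the dust settles these telescope neatly into the claimed rational function. Once this bookkeeping is carried out carefully the remaining analysis is routine.
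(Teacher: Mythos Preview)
Your approach is essentially the same as the paper's: compute the ratio $r_{l+1}/r_l$ as a rational function of $l$, show it exceeds $1$ from some point on, and then verify enough base cases. However, there is a small but genuine gap in your plan as stated. You claim you will show $r_{l+1}/r_l>1$ for all $l\gs4$, but this is false at $l=4$: the explicit rational function (which the paper computes) is
\[
\frac{r_{l+1}}{r_l}=\frac{(l-3)l^3(2l-5)^2(2l-1)(3l-7)(3l-5)(3l-4)^2(3l+5)(6l-11)^2(6l-7)(6l+1)}
{(l-2)^4(l+2)(2l-3)^2(3l-8)(3l-1)(3l+1)^2(6l-17)(6l-13)(6l-5)^2(6l-1)},
\]
and at $l=4$ this evaluates to approximately $0.967<1$, so in fact $r_5<r_4$. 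The derivative test you describe at $l=4$ will therefore fail.

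The fix is exactly what the paper does: show $r_{l+1}/r_l>1$ only for $l\gs5$, and then check \emph{two} base cases $r_4>1$ and $r_5>1$ directly (with $\la_{(4)}=(12,9,5,4)$, $\mu_{(4)}=(12,9,7,2)$ and $\la_{(5)}=(15,12,8,7,3)$, $\mu_{(5)}=(15,12,10,6,2)$). Once this adjustment is made, your argument goes through.
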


\begin{proof}
The bar-length formula gives
\[
\frac{\ddeg{\la_{(l+1)}}\ddeg{\mu_{(l)}}}{\ddeg{\mu_{(l+1)}}\ddeg{\la_{(l)}}}=\frac
{(l-3)l^3(2l-5)^2(2l-1)(3l-7)(3l-5)(3l-4)^2(3l+5)(6l-11)^2(6l-7)(6l+1)}
{(l-2)^4(l+2)(2l-3)^2(3l-8)(3l-1)(3l+1)^2(6l-17)(6l-13)(6l-5)^2(6l-1)}.
\]
This fraction is always greater than $1$ when $l\gs5$. Since $\ddeg{\la_{(l)}}>\ddeg{\mu_{(l)}}$ for $l=4,5$, the same is true for all $l\gs4$.
\end{proof}

\begin{lemma}\label{deglem6}
Suppose $p=3$, $l=4$ or $l\gs 7$, and define
\begin{align*}
\la_{(l)}&=(3l+1,3l-3,3l-7,3l-8,\ 3l-12\df3),
\\
\mu_{(l)}&=(3l+1,3l-3,3l-5,\ 3l-9\df6,\ 2)
\end{align*}
Then $\ddeg{\la_{(l)}}>\ddeg{\mu_{(l)}}$.
\end{lemma}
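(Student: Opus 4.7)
The plan is to mimic the proof of \cref{deglem5} essentially verbatim, exploiting the fact that $\la_{(l)}$ and $\mu_{(l)}$ agree in all but the first part (namely $3l+1$ in place of $3l$) and share the same size and the same value of $l_3$. Thus the bar-length formula from \cref{barlensec} applied to the ratio
\[
R_l=\frac{\ddeg{\la_{(l)}}}{\ddeg{\mu_{(l)}}}
\]
leaves a product in which the overwhelming majority of factorials, differences and sums cancel; only the finitely many factors coming from the positions where $\la_{(l)}$ and $\mu_{(l)}$ actually differ (together with the contribution of the first part $3l+1$ against the parts that differ) survive.

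First I would compute
\[
\frac{R_{l+1}}{R_l}=\frac{\ddeg{\la_{(l+1)}}\ddeg{\mu_{(l)}}}{\ddeg{\mu_{(l+1)}}\ddeg{\la_{(l)}}}
\]
as an explicit rational function of $l$, completely analogous to the displayed rational function in the proof of \cref{deglem5}, but with the factors $3l$ and $3l+3$ replaced by the shifted factors $3l+1$ and $3l+4$ arising from the new first part. Then, writing (numerator) $-$ (denominator) as a polynomial $P(l)$, I would verify that the Taylor expansion of $P$ about $l=l_0$ (for a small threshold $l_0$, presumably $l_0=7$ or $l_0=8$) has only non-negative coefficients; this amounts to checking that all derivatives of $P$ at $l_0$ are non-negative, which is a finite and purely mechanical calculation. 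This gives $R_{l_0}<R_{l_0+1}<R_{l_0+2}<\cdots$.

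Finally I would verify the two isolated base cases $R_4>1$ and $R_7>1$ (and any additional base cases $R_8,\dots,R_{l_0}$ that fall below the monotonicity threshold) by direct substitution in the bar-length formula, yielding $R_l>1$ for $l=4$ and for all $l\geq 7$. The slight nuisance here is that $l=5,6$ are excluded from the hypothesis of the lemma, which strongly suggests that $R_5\leq 1$ or $R_6\leq 1$; this is not a problem for the proof but does mean we cannot extend the monotonicity leftward past $l=7$, so the base case $l=4$ must be (and can be) checked on its own.

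The main obstacle is purely arithmetic bookkeeping: arranging the rational function $R_{l+1}/R_l$ so that positivity of $(R_{l+1}/R_l)-1$ is manifest from a non-negative-coefficient polynomial inequality. If the most natural arrangement fails at $l_0=7$, the remedy is simply to enlarge $l_0$ by a small amount and absorb the extra values of $l$ into the list of base cases verified by direct calculation, exactly as in \cref{deglem5}.
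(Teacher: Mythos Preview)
Your proposal is correct and follows essentially the same route as the paper: compute the ratio $R_{l+1}/R_l$ via the bar-length formula as an explicit rational function of $l$, check that it exceeds $1$ for $l\gs7$, verify the base case $R_7>1$ to get the result for all $l\gs7$, and handle $l=4$ by a separate direct check. The paper's proof is exactly this, with the monotonicity threshold at $l_0=7$ (so no extra base cases beyond $l=4$ and $l=7$ are needed).
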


\begin{proof}
For $l=4$ the lemma can be easily checked. For $l\ge 7$ the bar-length formula gives
\[
\frac{\ddeg{\la_{(l+1)}}\ddeg{\mu_{(l)}}}{\ddeg{\mu_{(l+1)}}\ddeg{\la_{(l)}}}=\frac
{(l-3)(l-1)^2l(l+2)(2l-5)^2(3l-7)(3l-5)(3l-1)^2(3l+1)(3l+4)(6l-11)^2(6l-7)}
{(l-2)^4(l+1)^2(2l-3)(3l-8)(3l-2)^3(3l+7)(6l-17)(6l-13)(6l-5)(6l-1)}.
\]
Since this fraction is always greater than $1$ and $\ddeg{\la_{(7)}}>\ddeg{\mu_{(7)}}$, the result follows.
\end{proof}

\begin{lemma}\label{deglem7}
Suppose $p=3$, $l\gs6$, and define
\begin{align*}
\la_{(l)}&=(3l-1,3l-2,3l-7,3l-8,\ 3l-12\df3),
\\
\mu_{(l)}&=(3l-1,3l-2,3l-6,3l-8\ 3l-12\df6,\ 2).
\end{align*}
Then $\ddeg{\la_{(l)}}>\ddeg{\mu_{(l)}}$.
\end{lemma}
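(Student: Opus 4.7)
The plan is to follow the same strategy as in \cref{deglem5,deglem6}. First, I would use the bar-length formula from \cref{barlensec} to write $\ddeg{\la_{(l)}}$ and $\ddeg{\mu_{(l)}}$ explicitly as products of factorials and linear factors in $l$. Note that $\la_{(l)}$ and $\mu_{(l)}$ have the same number of parts (for $l\gs 6$ both are strict partitions of roughly the same size: $\la_{(l)}$ has parts $3l-1,3l-2,3l-7,3l-8,3l-12,\dots,3$ while $\mu_{(l)}$ shares the first two parts $3l-1,3l-2$ and then differs in the middle, ending with the extra tail $\dots,6,2$). Since the common leading parts $3l-1$ and $3l-2$ and the common $2^{\lceil(|\la|-l(\la)-l_p(\la))/2\rceil}$ factor (after checking $l_3$) will contribute identical factors, most cancellations happen and only a small rational function in $l$ remains when forming $r_l := \ddeg{\la_{(l)}}/\ddeg{\mu_{(l)}}$.

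Next, I would compute the ratio $r_{l+1}/r_l$ as an explicit rational function in $l$. As in the two preceding lemmas, the shift $l\mapsto l+1$ alters only finitely many linear factors (those corresponding to the row lengths, pairwise sums and pairwise differences at the top of the diagram where the ``staircase tail'' has moved by $3$), so the calculation is entirely mechanical. I then show $r_{l+1}/r_l>1$ for all $l\gs 6$; this can be verified by writing the numerator minus denominator as a polynomial in $l$ and checking that this polynomial and all its derivatives are positive at $l=6$, so it is strictly increasing and positive throughout the range.

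Having established that $r_l$ is strictly increasing for $l\gs 6$, it suffices to verify directly that $r_6>1$, i.e.\ that $\ddeg{\la_{(6)}}>\ddeg{\mu_{(6)}}$. This is a single numerical evaluation via the bar-length formula for the partitions $\la_{(6)}=(17,16,11,10,6,3)$ and $\mu_{(6)}=(17,16,12,10,6,2)$, which settles the base case; then by the monotonicity established above, $\ddeg{\la_{(l)}}>\ddeg{\mu_{(l)}}$ for every $l\gs 6$.

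The main obstacle is purely bookkeeping: correctly tracking the contribution of every factor $\la_r\pm\la_s$ when passing from $\la_{(l)}$ to $\la_{(l+1)}$ (and similarly for $\mu$), since a shift of the ``staircase'' by $3$ changes many of these, and any arithmetic slip would yield a wrong rational function. Beyond that, the argument is entirely routine and follows the template already used in \cref{deglem5,deglem6}.
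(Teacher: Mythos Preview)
Your proposal is correct and takes essentially the same approach as the paper: compute the ratio $r_{l+1}/r_l$ via the bar-length formula as an explicit rational function in $l$, verify it exceeds $1$ for $l\gs6$ (by the derivative-at-$l_0$ trick you describe, which is exactly the template spelled out before \cref{deglem1}), and check the base case $\ddeg{\la_{(6)}}>\ddeg{\mu_{(6)}}$ directly. The paper carries out this computation and records the explicit rational function, but otherwise there is no difference in method.
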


\begin{proof}
The bar-length formula gives
\[
\frac{\ddeg{\la_{(l+1)}}\ddeg{\mu_{(l)}}}{\ddeg{\mu_{(l+1)}}\ddeg{\la_{(l)}}}=\frac
{(l-4)(l-1)^3(l+1)(2l-5)^2(3l-10)(3l-8)(3l-4)(3l-1)(3l+2)(6l-1)} 
{(l-3)(l-2)^2l^3(2l-1)(3l-11)^2(3l-5)(3l+5)(6l-13)(6l-7)}>1.
\]
Since $\ddeg{\la_{(6)}}>\ddeg{\mu_{(6)}}$, the result follows.
\end{proof}

\begin{lemma}\label{deglem2}
Suppose $p=3$, $l\gs3$ and define
\[
\la_{(l)}=(3l\df3).
\]
Then $\la_{(l)}$ is not homogeneous.
\end{lemma}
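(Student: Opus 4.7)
The plan follows the template of \cref{deglem1,deglem5,deglem6,deglem7}: exhibit a strict partition $\mu_{(l)}$ with $\mu_{(l)}^\reg=\la_{(l)}^\reg$ and $\ddeg{\la_{(l)}}>\ddeg{\mu_{(l)}}$, and then invoke \cref{basicdeg}. A natural candidate is
\[
\mu_{(l)}=(3l-1,\ 3l-2,\ 3l-6\df3),
\]
which agrees with $\la_{(l)}$ from the third part on; it is obtained from $\la_{(l)}$ by ``moving'' the node at $(1,3l)$ to the position $(2,3l-2)$. Both of these nodes lie in ladder $2l$, so $\la_{(l)}$ and $\mu_{(l)}$ have the same number of nodes in every ladder and therefore the same regularisation. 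Clearly $\mu_{(l)}$ is strict and $\card{\mu_{(l)}}=\card{\la_{(l)}}$.

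To compare degrees I apply the bar-length formula. Because $\mu_r=\la_r$ for $r\geq 3$, the hook factors for pairs $(r,s)$ with both $r,s\geq 3$ cancel in the ratio. Using $l_3(\la_{(l)})=l$ and $l_3(\mu_{(l)})=l-2$, the powers of $2$ contribute a factor $\tfrac12$ while the factorials contribute $(3l-2)/(3l)$; after simplification one finds
\[
\frac{\ddeg{\la_{(l)}}}{\ddeg{\mu_{(l)}}}=\frac{3l-2}{2l}\prod_{s=3}^l\frac{(s-1)(s-2)(6l-3s+2)(6l-3s+1)}{(2l-s+1)(2l-s)(3s-4)(3s-5)}.
\]
The base case $l=3$ gives $77/72>1$. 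I would then follow the pattern of \cref{deglem5,deglem6,deglem7} by computing the double ratio $\ddeg{\la_{(l+1)}}\ddeg{\mu_{(l)}}/(\ddeg{\mu_{(l+1)}}\ddeg{\la_{(l)}})$ as a rational function of $l$, showing it exceeds $1$ for $l\geq 3$, and concluding by induction.

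The main obstacle is the algebraic verification of this inductive step. Unlike in the previous lemmas, the transition from $\mu_{(l)}$ to $\mu_{(l+1)}$ is not a simple prepending (the initial parts $(3l-1,3l-2,3l-6)$ of $\mu_{(l)}$ become $(3l+2,3l+1,3l-3)$ in $\mu_{(l+1)}$, while the remainder is fixed), so the resulting rational function is more intricate. Nevertheless, after clearing denominators, the numerator minus the denominator is a polynomial in $l$ of fixed degree, and the standard argument of checking that all its derivatives are positive at $l=3$ should complete the proof.
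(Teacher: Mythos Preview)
Your approach is exactly the paper's: the same comparison partition $\mu_{(l)}=(3l-1,\,3l-2,\,3l-6\df3)$, the same regularisation check, the same base case $77/72$, and the same inductive scheme via the double ratio. Your worry that the double ratio is ``more intricate'' than in the preceding lemmas is unfounded; it collapses to
\[
\frac{\ddeg{\la_{(l+1)}}\ddeg{\mu_{(l)}}}{\ddeg{\mu_{(l+1)}}\ddeg{\la_{(l)}}}=\frac{l^2(6l-5)(6l-1)}{(2l-1)^2(3l-2)(3l+2)},
\]
and numerator minus denominator equals $4(3l-1)(l-1)>0$. (Note that the transitions $\la_{(l)}\to\la_{(l+1)}$ and $\mu_{(l)}\to\mu_{(l+1)}$ in \cref{deglem5,deglem6,deglem7} are not simple prependings either, yet those ratios also simplify; the situation here is no worse.) So the proof is complete once you actually carry out this computation rather than deferring it.
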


\begin{proof}
Define
\[
\mu_{(l)}=(3l-1,3l-2,\ 3l-6\df3).
\]
Then $\mu_{(l)}^\reg=\la_{(l)}^\reg$, and the bar-length formula gives
\[
\frac{\ddeg{\la_{(l+1)}}\ddeg{\mu_{(l)}}}{\ddeg{\mu_{(l+1)}}\ddeg{\la_{(l)}}}=\frac{l^2(6l-5)(6l-1)}{(2l-1)^2(3l-2)(3l+2)}>1.
\]
Since $\ddeg{\la_{(3)}}>\ddeg{\mu_{(3)}}$, we are done by \cref{basicdeg}.
\end{proof}

\begin{lemma}\label{deglem4}
Suppose $p=3$, $l\gs7$, and define
\[
\la_{(l)}=(3l-1,3l-2,\ 3l-6\df3).
\]
Then $\la$ is not homogeneous.
\end{lemma}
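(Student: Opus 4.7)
The strategy follows that of \cref{deglem1,deglem2}: I will exhibit a strict partition $\mu_{(l)}$ satisfying $\mu_{(l)}^\reg=\la_{(l)}^\reg$ and $\ddeg{\la_{(l)}}>\ddeg{\mu_{(l)}}$ for all $l\gs7$, and then invoke \cref{basicdeg}. The first task is to choose $\mu_{(l)}$ by locally perturbing the head of $\la_{(l)}=(3l-1,3l-2,3l-6\df3)$, moving a node within its ladder without altering ladder cardinalities. Since $\la_{(l)}$ has a gap of $4$ between its second and third parts but only $1$ between the first two, the shape admits natural strict rearrangements of the top two rows (together with a small compensating modification in the tail). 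I would write down one such candidate explicitly, for instance something of the form $\mu_{(l)}=(3l-2,3l-3,3l-5,3l-9\df3,1)$ or a variant with the same size; the precise candidate is determined by requiring the multisets of ladder memberships to agree.

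Once $\mu_{(l)}$ is fixed, I would verify $\mu_{(l)}^\reg=\la_{(l)}^\reg$ at $l=7$ by direct computation of both regularizations, and then propagate the equality to all $l\gs7$ using \cref{l419}, noting that $\la_{(l+1)}$ and $\mu_{(l+1)}$ are obtained from $\la_{(l)}$ and $\mu_{(l)}$ by prepending a common new largest part (so the hypotheses of \cref{l419} apply).

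The remaining work is a bar-length computation. Schur's formula yields closed expressions for $\ddeg{\la_{(l)}}$ and $\ddeg{\mu_{(l)}}$ as products over row pairs, so the ratio $r_l=\ddeg{\la_{(l)}}/\ddeg{\mu_{(l)}}$ is an explicit rational function in $l$. The telescoping ratio $r_{l+1}/r_l$ simplifies to a quotient of low-degree polynomials, and following the uniform template of the preceding \textsc{deglem} arguments, one verifies that this quotient exceeds $1$ for $l\gs7$ by expanding numerator minus denominator, re-expressing in powers of $(l-7)$, and checking that every coefficient is nonnegative. Combined with a direct verification that $r_7>1$, this yields $r_l>1$ for all $l\gs7$, and \cref{basicdeg} finishes the proof.

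The main obstacle is the selection of $\mu_{(l)}$: it must simultaneously be a strict partition of $|\la_{(l)}|$, share the regularization of $\la_{(l)}$, and give a dimension ratio greater than $1$ already at the base case $l=7$. The lower bound $l\gs7$ in the statement is likely sharp for this bar-length approach; for $l\in\{3,4,5,6\}$ the same candidate $\mu_{(l)}$ is expected either to fail to be strict or to satisfy $r_l\ls1$, and these small cases would be handled separately by appeal to known decomposition matrices or by other reductions in the paper. Beyond this combinatorial search for $\mu_{(l)}$, the remaining computations are mechanical and of the same character as those already carried out in \cref{deglem5,deglem6,deglem7}.
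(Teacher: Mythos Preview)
Your overall plan matches the paper's: exhibit $\mu_{(l)}$ with $\mu_{(l)}^\reg=\la_{(l)}^\reg$, show $\ddeg{\la_{(l)}}>\ddeg{\mu_{(l)}}$ via a telescoping bar-length computation, and apply \cref{basicdeg}. However, two concrete points in your outline do not work as stated.

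First, your use of \cref{l419} is based on a false premise. You assert that $\la_{(l+1)}$ and $\mu_{(l+1)}$ are obtained from $\la_{(l)}$ and $\mu_{(l)}$ by prepending a common new largest part, but
\[
\la_{(l)}=(3l-1,\,3l-2,\,3l-6\df3),\qquad \la_{(l+1)}=(3l+2,\,3l+1,\,3l-3\df3),
\]
so the first \emph{three} parts all change; no single prepended part relates them. The same applies to any $\mu_{(l)}$ whose top part depends on $l$ (as both your tentative candidate and the paper's do). The paper does not use \cref{l419} here at all: it verifies $\mu_{(l)}^\reg=\la_{(l)}^\reg$ directly for every $l$, and then computes the ratio $r_{l+1}/r_l$ as an explicit rational function in $l$.

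Second, your candidate $\mu_{(l)}=(3l-2,3l-3,3l-5,3l-9\df3,1)$ is offered only tentatively, and you have not verified the regularisation equality for it. The paper's actual choice is
\[
\mu_{(l)}=(3l-1,\,3l-2,\,3l-7,\,3l-8,\,3l-12\df3),
\]
which differs from $\la_{(l)}$ only in that the pair $(3l-6,3l-9)$ is replaced by $(3l-7,3l-8)$; this local swap preserves all ladder cardinalities, so $\mu_{(l)}^\reg=\la_{(l)}^\reg$ is immediate for every $l$. With this $\mu_{(l)}$ the ratio $r_{l+1}/r_l$ is computed explicitly and shown to exceed $1$, and the base case $r_7>1$ is checked directly.
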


\begin{proof}
Define
\[
\mu_{(l)}=(3l-1,3l-2,3l-7,3l-8,\ 3l-12\df3).
\]
Then $\mu_{(l)}^\reg=\la_{(l)}^\reg$, and
\[
\frac{\ddeg{\la_{(l+1)}}\ddeg{\mu_{(l)}}}{\ddeg{\mu_{(l+1)}}\ddeg{\la_{(l)}}}=\frac{(l-2)^2(2l-1)^2(6l-17)(6l-13)(6l-11)(6l-7)}{(2l-5)^2(2l-3)^2(3l-8)(3l-4)(6l-5)(6l-1)}>1.
\]
Now the fact that $\ddeg{\la_{(7)}}>\ddeg{\mu_{(7)}}$ gives the result.
\end{proof}

\begin{lemma}\label{deglem8}
Suppose $p=3$, $l\gs2$, and define
\[
\la_{(l)}=
\begin{cases}
(3l,3l-4,3l-5,\ 3l-9\df3)&\text{if }l\gs3
\\
(6,2,1)&\text{if }l=2.
\end{cases}
\]
Then $\la$ is not homogeneous.
\end{lemma}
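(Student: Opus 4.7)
The plan is to apply \cref{basicdeg}: for each $l$ I will exhibit a strict partition $\mu_{(l)}$ with the same $3$-regularisation as $\la_{(l)}$ but strictly smaller $\ddim$.

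For the generic case $l\gs3$, I would take
\[
\mu_{(l)}=(3l\df3),
\]
following the template of \cref{deglem2}. The partitions $\la_{(l)}$ and $\mu_{(l)}$ differ only in their second and third parts: $\la_{(l)}$ has $3l-4,3l-5$ where $\mu_{(l)}$ has $3l-3,3l-6$. A direct comparison of ladder contents shows that this rearrangement preserves the number of nodes in each ladder, so $\la_{(l)}^\reg=\mu_{(l)}^\reg$. The bar-length formula then expresses
\[
\frac{\ddeg{\la_{(l+1)}}\ddeg{\mu_{(l)}}}{\ddeg{\mu_{(l+1)}}\ddeg{\la_{(l)}}}
\]
as an explicit rational function of $l$, and I would argue (using the standard derivative trick from the preceding lemmas) that this ratio is strictly greater than $1$ for all $l\gs3$. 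Combined with a direct verification of the base case $\ddeg{\la_{(3)}}>\ddeg{\mu_{(3)}}$, this gives $\ddeg{\la_{(l)}}>\ddeg{\mu_{(l)}}$ for every $l\gs3$, and \cref{basicdeg} completes this case.

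For the exceptional case $l=2$, I would take $\mu=(6,3)$. Both $(6,2,1)$ and $(6,3)$ have ladder counts $0\!:\!1$, $1\!:\!1$, $2\!:\!3$, $3\!:\!2$, $4\!:\!2$, so both regularise to $(5,3,1)$; meanwhile the bar-length formula gives $\ddeg{(6,2,1)}=240$ and $\ddeg{(6,3)}=224$, so \cref{basicdeg} applies here as well.

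The main obstacle is the routine but tedious algebraic verification that the ratio of ratios exceeds $1$ for all $l\gs3$. As in the proofs of the preceding lemmas, I expect this to reduce to checking that the difference between numerator and denominator, viewed as a polynomial in $l$, has all non-negative derivatives at $l=3$; no conceptually new ideas beyond those already developed in \cref{barlensec} should be required.
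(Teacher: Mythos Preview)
Your approach has a genuine gap: with your choice $\mu_{(l)}=(3l\df3)$ the inequality $\ddeg{\la_{(l)}}>\ddeg{\mu_{(l)}}$ is \emph{false} for $l\gs6$, so \cref{basicdeg} cannot be applied. The problem is your expectation that the ratio of ratios
\[
\frac{\ddeg{\la_{(l+1)}}\ddeg{\mu_{(l)}}}{\ddeg{\mu_{(l+1)}}\ddeg{\la_{(l)}}}
\]
exceeds $1$; in fact it is strictly less than $1$, so the sequence $r_l=\ddeg{\la_{(l)}}/\ddeg{\mu_{(l)}}$ is decreasing rather than increasing. Direct computation gives $r_3\approx1.099$, $r_4=216/209\approx1.034$, $r_5\approx1.002$, and $r_6\approx0.985<1$. (Asymptotically $r_l\to\frac{20}{27}\cdot\frac{9\sqrt3}{4\pi}\approx0.92$.) So while your argument happens to work for $l=2,3,4,5$, it collapses from $l=6$ onward, and the ``derivative trick'' you propose would in fact establish the wrong monotonicity.

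This is exactly why the paper's proof is more elaborate. For $2\ls l\ls7$ the paper compares with $(3l-1,3l-2,\ 3l-6\df3)$, which is strictly less dominant than your $\mu_{(l)}$ and for which the required inequality does hold in that range (one can see this by combining your inequality for $l\le5$ with \cref{deglem2}, and checking $l=6,7$ directly). For $l\gs8$ the paper switches to a different comparison partition
\[
\mu_{(l)}=(3l,3l-3,3l-5,\ 3l-9\df6,\ 2),
\]
obtained from $\la_{(l)}$ by moving a single node within ladder $2l$; here the ratio of ratios \emph{is} greater than $1$, and the base case at $l=8$ goes through. Your case $l=2$ with $\mu=(6,3)$ is fine and is essentially the same as the paper's choice $(5,4)$ after one more application of the same idea.
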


\begin{proof}
We aim to find a strict partition $\mu$ such that $\mu^\reg=\la^\reg$ and $\ddeg\mu<\ddeg\la$. For $2\ls l\ls 7$ the partition $(3l-1,3l-2,\ 3l-6\df3)$ will do the trick, while for $l\gs8$ we define
\[
\mu_{(l)}=(3l,3l-3,3l-5,\ 3l-9\df6,\ 2).
\]
Then $\mu_{(l)}^\reg=\la_{(l)}^\reg$, and
\[
\frac{\ddeg{\la_{(l+1)}}\ddeg{\mu_{(l)}}}{\ddeg{\mu_{(l+1)}}\ddeg{\la_{(l)}}}=\frac
{(l-3)l^3(2l-3)^2(2l+1)(3l-7)(3l-5)(3l-2)^2(3l-1)(3l+5)} 
{(l-2)(l-1)^3(l+2)(2l-1)^2(3l-8)^2(3l+1)^3(6l-7)}>1.
\]
Now the fact that $\ddeg{\la_{(8)}}>\ddeg{\mu_{(8)}}$ gives the result.
\end{proof}

\begin{lemma}\label{deglem9}
Suppose $p=3$, $l\gs1$, and define
\begin{align*}
\la_{(l)}&=(6l+6,\ 6l+4\df3l+7,\ 3l+3,\ 3l+1\df4),
\\
\mu_{(l)}&=(6l+6,\ 6l+4\df3l+4,\ 3l,\ 3l-2\df4).
\end{align*}
Then $\ddeg{\la_{(l)}}>\ddeg{\mu_{(l)}}$.
\end{lemma}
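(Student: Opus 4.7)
The plan is to follow the template established by the preceding dimension lemmas (\cref{deglem1}--\cref{deglem8}). First I would observe that the two partitions $\la_{(l)}$ and $\mu_{(l)}$ have the same number of parts and the same total size; indeed they agree in the top portion $(6l+6, 6l+4, \dots, 3l+7)$, and the remaining parts
$(3l+3,\, 3l+1, 3l-2, \dots, 4)$ in $\la_{(l)}$ versus $(3l, 3l-2, \dots, 4)$ in $\mu_{(l)}$ are both strict arithmetic-progression-like tails of matching length. This makes $\ddeg{\la_{(l)}}$ and $\ddeg{\mu_{(l)}}$ directly comparable via Schur's bar-length formula.

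Next I would apply the bar-length formula for $\ddeg{}$ recalled in \cref{barlensec} to form the ratio
\[
R_l = \frac{\ddeg{\la_{(l)}}}{\ddeg{\mu_{(l)}}},
\]
and compute the successor ratio
\[
\frac{R_{l+1}}{R_l} = \frac{\ddeg{\la_{(l+1)}}\,\ddeg{\mu_{(l)}}}{\ddeg{\mu_{(l+1)}}\,\ddeg{\la_{(l)}}}.
\]
The factorial parts and the $2$-powers (which depend only on $|\la_{(l)}|-l(\la_{(l)})$ and analogously for $\mu_{(l)}$, both of which change in the same way as $l$ increases) cancel, and most of the factors $(\la_r \pm \la_s)$ also cancel between numerator and denominator because $\la_{(l)}$ and $\mu_{(l)}$ differ in only a few parts. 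This leaves a rational function of $l$ with numerator and denominator of modest degree.

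Then I would verify that this rational function exceeds $1$ for all $l \ge 1$; following the recipe described in the paragraph before \cref{deglem1}, this can be checked by bringing the inequality to polynomial form and verifying that all derivatives of (numerator $-$ denominator) are nonnegative at $l=1$ (which ensures positivity on $[1,\infty)$). Combined with a direct numerical check that $R_1 > 1$ from the bar-length formula applied to $\la_{(1)} = (12, 10, 6, 4)$ and $\mu_{(1)} = (12, 10, 7, 4)$, this yields $R_l > 1$ for every $l \ge 1$ by induction.

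The main obstacle here is purely bookkeeping: writing down $(\la_{(l+1)})_r \pm (\la_{(l+1)})_s$ versus $(\la_{(l)})_r \pm (\la_{(l)})_s$ carefully so that the correct factors cancel, and paying attention to the ``seam'' between the shared head and the differing tail (the transition from $3l+7$ to $3l+3$ in $\la_{(l)}$ and from $3l+4$ to $3l$ in $\mu_{(l)}$) where the arithmetic progression structure is interrupted. Once the ratio is simplified to a single rational expression in $l$, the positivity check is routine and follows the pattern already used several times in \cref{barlensec}.
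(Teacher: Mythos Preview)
Your approach has a genuine gap: the successor-ratio method fails here because $R_l=\ddeg{\la_{(l)}}/\ddeg{\mu_{(l)}}$ is \emph{not} monotone increasing in $l$. In fact the paper computes $R_l$ directly and obtains
\[
R_l=\frac{(l+1)(3l-1)(6l+7)(9l+8)(9l+10)}{3l(l+2)(6l+1)(9l+7)^2},
\]
which tends to $1$ from above as $l\to\infty$; numerically $R_1\approx1.041$, $R_2\approx1.064$, $R_3\approx1.048$, so $R_{l+1}/R_l<1$ already at $l=2$. Hence the step ``verify that this rational function exceeds $1$ for all $l\ge1$'' cannot succeed, and the induction breaks down. (Incidentally, your base-case computation is also off: $\mu_{(1)}=(12,10,7,3)$, not $(12,10,7,4)$, since the tail $3l-2\df4$ is empty when $l=1$.)

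The reason the paper can take the direct route here---and why you should too---is that $\la_{(l)}$ and $\mu_{(l)}$ differ in only two parts, namely $(\la_{l+2},\la_{l+3})=(3l+3,3l+1)$ versus $(\mu_{l+2},\mu_{l+3})=(3l+4,3l)$, with all other parts identical. So when you form $R_l$ from the bar-length formula, all but a bounded number of the factors $\la_r\pm\la_s$ cancel against the corresponding $\mu_r\pm\mu_s$, and $R_l$ itself collapses to a rational function of bounded degree in $l$. One then checks $R_l>1$ directly, e.g.\ by clearing denominators and verifying (numerator $-$ denominator) is positive for $l\ge1$. This is different from \cref{deglem1}--\cref{deglem8}, where the partitions differ in a number of places that grows with $l$, so $R_l$ has no closed form but $R_{l+1}/R_l$ does.
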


\begin{proof}
The bar-length formula gives
\[
\frac{\ddeg{\la_{(l)}}}{\ddeg{\mu_{(l)}}}=\frac
{(l+1)(3l-1)(6l+7)(9l+8)(9l+10)}
{3l(l+2)(6l+1)(9l+7)^2}>1.\qedhere
\]
\end{proof}

\begin{lemma}\label{deglem10}
Suppose $p=3$, $l\gs6$, and define
\[
\la_{(l)}=(6l-4\df3l+5,\ 3l+2,3l,\ 3l-4\df2).
\]
Then $\la_{(l)}$ is not homogeneous.
\end{lemma}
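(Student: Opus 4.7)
The plan is to invoke \cref{basicdeg}: find a strict partition $\mu_{(l)}$ of $|\la_{(l)}|$ with $\mu_{(l)}^\reg = \la_{(l)}^\reg$ and $\ddeg{\mu_{(l)}} < \ddeg{\la_{(l)}}$. This is exactly the template used throughout \cref{barlensec}.

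The distinctive feature of $\la_{(l)}$ is that $3l$ is the unique part divisible by $3$; it interrupts what would otherwise be a near arithmetic progression with common difference $3$ of parts congruent to $2\pmod 3$. A natural candidate for $\mu_{(l)}$ therefore modifies only the short block $(3l+2,\, 3l,\, 3l-4)$, replacing it by a strict sequence of three parts having the same total but with no part divisible by $3$, chosen so that for every $k$ the counts $\lad{k}{\la_{(l)}}$ and $\lad{k}{\mu_{(l)}}$ agree. This last condition immediately gives $\mu_{(l)}^\reg = \la_{(l)}^\reg$. One may have to try several such replacements before landing on one that yields a cleanly bounded ratio; the guiding principle, as in \cref{deglem8}, is to keep $\mu_{(l)}$ as close as possible to $\la_{(l)}$ while pushing the $3l$ onto a later ladder.

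Once $\mu_{(l)}$ is chosen, Schur's bar-length formula lets us compute
\[
r_l = \frac{\ddeg{\la_{(l)}}}{\ddeg{\mu_{(l)}}}.
\]
Most factorial and product factors cancel between $\la_{(l)}$ and $\mu_{(l)}$, leaving an explicit rational function, and the same is true of $r_{l+1}/r_l$. Following the pattern of \cref{deglem5,deglem6,deglem7,deglem8,deglem9}, I would then show $r_{l+1}/r_l > 1$ for $l \geq 6$ (by writing the difference of numerator and denominator as a polynomial in $l$ whose derivatives at $l=6$ are all non-negative), and verify $r_6 > 1$ by direct evaluation. The chain $1 < r_6 < r_7 < \cdots$ then delivers $\ddeg{\la_{(l)}} > \ddeg{\mu_{(l)}}$ for all $l \geq 6$, and \cref{basicdeg} concludes that $\la_{(l)}$ is not homogeneous.

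The main obstacle is choosing $\mu_{(l)}$: the modification must simultaneously preserve the regularisation and strictly reduce $\ddeg$, and a first guess may fail one of these. Moreover, because the threshold $l=6$ is small, it is quite possible that the natural $\mu_{(l)}$ does not cover the very first case and that $l=6$ (or perhaps also $l=7$) will need to be handled by a separate ad hoc comparison partition, exactly as is done in \cref{deglem1,deglem8}. Beyond this combinatorial search, everything else is a routine application of the bar-length formula in the style already established in this section.
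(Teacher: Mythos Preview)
Your plan is exactly the paper's approach. The paper's comparison partition is $\mu_{(l)}=(6l-4\df3l+5,\ 3l+3,\ 3l-1,\ 3l-4\df2)$, obtained by changing only the two parts $3l+2,3l$ to $3l+3,3l-1$ (so $\mu_{(l)}$ still has a part divisible by $3$, and only two parts are altered rather than three); with this choice the ratio $\ddeg{\la_{(l)}}/\ddeg{\mu_{(l)}}$ simplifies to $\dfrac{(l+1)(3l-4)(6l-1)(9l-1)}{(l-1)(3l-1)(6l+5)(9l-2)}$, which is seen to exceed $1$ for $l\ge6$ directly, so the increasing-ratio induction you anticipate is not needed.
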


\begin{proof}
Define
\[
\mu_{(l)}=(6l-4\df3l+5,\ 3l+3,3l-1,\ 3l-4\df2).
\]
Then $\mu_{(l)}^\reg=\la_{(l)}^\reg$, and
\[
\frac{\ddeg{\la_{(l)}}}{\ddeg{\mu_{(l)}}}=\frac{(l+1)(3l-4)(6l-1)(9l-1)}{(l-1)(3l-1)(6l+5)(9l-2)},
\]
which is greater than $1$ for $l\gs6$.
\end{proof}

\begin{lemma}\label{deglem11}
Suppose $p=3$, $l\gs 3$, and define
\begin{align*}
\la_{(l)}&=(3l,\ 3l-2\df4,\ 3,1),
\\
\mu_{(l)}&=\begin{cases}
(3l+1\df10,\ 6,4,3,1)&\text{if }l\gs4
\\
(13,7,4)&\text{if }l=3.
\end{cases}
\end{align*}
Then $\ddeg{\la_{(l)}}>\ddeg{\mu_{(l)}}$.
\end{lemma}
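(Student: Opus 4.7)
The plan is to follow the same strategy used in \cref{deglem1,deglem5,deglem6,deglem7,deglem9,deglem10}: apply the bar-length formula of \cref{barlensec} to express the ratio $\ddeg{\la_{(l)}}/\ddeg{\mu_{(l)}}$, or the ``ratio of ratios'' between consecutive values of $l$, as an explicit rational function in $l$, and then verify this function exceeds $1$ on the required range, reducing to a finite check of base cases.

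Because the definition of $\mu_{(l)}$ branches at $l=3$, I would treat $l=3$ first. For this value the partitions $\la_{(3)}=(9,7,4,3,1)$ and $\mu_{(3)}=(13,7,4)$ have different lengths (namely $5$ and $3$) and different numbers of parts divisible by $3$ (namely $2$ and $0$), so the factors $2^{\lceil(|\la|-l(\la)-l_3(\la))/2\rceil}$ differ between the two sides. A direct evaluation of the bar-length formula then yields concrete numerical values of $\ddeg{\la_{(3)}}$ and $\ddeg{\mu_{(3)}}$, from which the inequality can be read off.

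For $l\geq 4$ the two families have uniform combinatorial shape: both have length $l+2$ and exactly two parts divisible by $3$, so the exponent $\lceil\tfrac12(|\la|-l(\la)-l_3(\la))\rceil$ behaves uniformly in $l$ on each side. This lets me compute
\[
\frac{\ddeg{\la_{(l+1)}}\,\ddeg{\mu_{(l)}}}{\ddeg{\mu_{(l+1)}}\,\ddeg{\la_{(l)}}}
\]
as an explicit rational function of $l$, with the dominant factors coming from the new maximal part $3l+3$ on the $\la$-side and $3l+4$ on the $\mu$-side, together with the parts in $\la_{(l+1)}$ versus $\la_{(l)}$ (respectively $\mu_{(l+1)}$ versus $\mu_{(l)}$) that shift by $3$. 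I would show that this rational function is greater than $1$ for every $l\geq 4$ by the usual device of subtracting denominator from numerator and checking that the resulting polynomial and all of its derivatives are positive at $l=4$. Combined with a direct evaluation showing $\ddeg{\la_{(4)}}>\ddeg{\mu_{(4)}}$, this yields the inequality for all $l\geq 4$ by induction.

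The main obstacle is purely bookkeeping: the bar-length formula involves a product over all pairs of parts, and the tails $(3,1)$ of $\la_{(l)}$ and $(6,4,3,1)$ of $\mu_{(l)}$ break the symmetry of the dominant arithmetic-progression portions, so some care is required in collecting terms when passing from $l$ to $l+1$. Once the rational function is correctly written down, the inequality is a routine polynomial check, and no new ideas beyond those already used in \cref{barlensec} are needed.
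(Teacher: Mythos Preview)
Your proposal is correct and follows essentially the same approach as the paper: treat $l=3$ by a direct numerical check, and for $l\geq4$ compute the ratio of ratios $\ddeg{\la_{(l+1)}}\ddeg{\mu_{(l)}}/\ddeg{\mu_{(l+1)}}\ddeg{\la_{(l)}}$ as an explicit rational function (the paper obtains $\frac{l(3l+7)(3l+10)(6l+5)}{(l+2)(3l+2)(3l+11)(6l+1)}$), verify it exceeds $1$ for $l\geq4$, and check the base case $\ddeg{\la_{(4)}}>\ddeg{\mu_{(4)}}$. Your observations about lengths and $l_3$ being equal for $l\geq4$ are exactly what makes the ratio-of-ratios computation clean.
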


\begin{proof}
The lemma easily holds for $l=3$. So assume now that $l\gs 4$. Then the bar-length formula gives
\[
\frac{\ddeg{\la_{(l+1)}}\ddeg{\mu_{(l)}}}{\ddeg{\mu_{(l+1)}}\ddeg{\la_{(l)}}}=\frac{l(3l+7)(3l+10)(6l+5)}{(l+2)(3l+2)(3l+11)(6l+1)},
\]
which is greater than $1$ for $l\gs4$. Since $\ddeg{\la_{(4)}}>\ddeg{\mu_{(4)}}$ the result follows.
\end{proof}

\begin{lemma}\label{deglem12}
Suppose $p=3$, $l\gs1$, and define
\begin{align*}
\la_{(l)}&=(3l,\ 3l-2\df1),
\\
\mu_{(l)}&=(3l+1\df4).
\end{align*}
Then $\ddeg{\la_{(l)}}\geq \ddeg{\mu_{(l)}}$, with equality \iff $l=1$.
\end{lemma}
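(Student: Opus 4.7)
The plan is to apply the bar-length formula directly and reduce the ratio $r_l := \ddeg{\la_{(l)}}/\ddeg{\mu_{(l)}}$ to an explicit product, then show that every factor in that product exceeds $1$ for $l\gs 2$ while the product is empty (and hence equal to $1$) when $l=1$.

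First I record the basic invariants: $|\la_{(l)}|=|\mu_{(l)}|$; $\la_{(l)}$ has length $l+1$ and exactly one part divisible by $3$ (namely $3l$), while $\mu_{(l)}$ has length $l$ and no part divisible by $3$ (all its parts being $\equiv 1\ppmod 3$). A parity check shows that the exponent of $2$ appearing in $\ddeg{\mu_{(l)}}$ exceeds that in $\ddeg{\la_{(l)}}$ by exactly $1$, contributing a factor of $\tfrac12$ to $r_l$.

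Next I separate the shared parts $S=\{4,7,\ldots,3l-2\}$ from the non-shared parts $\{3l,1\}$ of $\la_{(l)}$ and $\{3l+1\}$ of $\mu_{(l)}$. Pairs $(r,s)$ drawn entirely from $S$ contribute identically on both sides and cancel; the factorial ratio collapses to $3l+1$; and the remaining contributions, coming from pairs involving at least one non-shared part, assemble into
\[
r_l=\frac{3l-1}{2}\prod_{s\in S}\frac{(3l-s)(s-1)(3l+1+s)}{(3l+1-s)(3l+s)(s+1)}.
\]

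Parameterising $s=3j+1$ with $j=1,\ldots,l-1$, two telescoping cancellations appear: $\prod_j 3j/3(l-j)=1$, and $\prod_j(3l-3j-1)/(3j+2)=2/(3l-1)$. After these cancel against the prefactor $(3l-1)/2$, the ratio collapses to
\[
r_l=\prod_{j=1}^{l-1}\frac{3l+3j+2}{3l+3j+1}.
\]
Each factor is strictly greater than $1$, so $r_l>1$ for $l\gs 2$, while $r_1$ is an empty product equal to $1$, yielding the lemma. The only step requiring any care is the algebraic cancellation leading to the clean telescoping form; no inductive or asymptotic estimate is needed once that reduction is in hand.
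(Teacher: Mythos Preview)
Your proof is correct and follows essentially the same route as the paper: both compute the ratio $\ddeg{\la_{(l)}}/\ddeg{\mu_{(l)}}$ directly from the bar-length formula and arrive at the identical closed form $\prod_{j=1}^{l-1}(3l+3j+2)/(3l+3j+1)$, from which the conclusion is immediate. The paper simply states this product without showing the intermediate cancellations you carried out, but the argument is the same.
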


\begin{proof}
The bar-length formula gives
\[
\frac{\ddeg{\la_{(l)}}}{\ddeg{\mu_{(l)}}}=\frac{(3l+5)(3l+8)\dots(6l-1)}{(3l+4)(3l+7)\dots(6l-2)},
\]
which is obviously greater than $1$ when $l>1$.
\end{proof}

Now we prove a more general result using some of the lemmas in this section.

\begin{propn}\label{0211deg}
Suppose $p=3$, $l\gs3$, and
\[
\la=(3l-1+a_1,3l-4+a_2,\dots,-1+a_{l+1}),
\]
where:
\begin{itemize}
\item
$a_r\in\{0,1,2\}$ for each $r$;
\item
there is at least one $r\gs4$ for which $a_r\neq1$;
\item
if $a_r=2$ and $r\gs2$ then $a_{r-1}=0$;
\item
if $a_r=0$ then $r\ls l$ and $a_{r+1}=2$.
\end{itemize}
Then $\la$ is not homogeneous.
\end{propn}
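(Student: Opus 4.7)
The strategy is to invoke \cref{basicdeg}: for each admissible strict partition $\la$ I exhibit another strict partition $\mu$ with $\mu^\reg = \la^\reg$ and $\ddeg \mu < \ddeg \la$, whence $\la$ is not homogeneous.

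First I analyse the structure of $(a_1, \dots, a_{l+1})$. The constraints force it to consist of runs of $1$'s interrupted only by consecutive $0,2$ pairs (a $0$ immediately followed by a $2$), together with the single further possibility of an isolated $2$ at position $1$. The hypothesis that some $a_r \neq 1$ for $r \geq 4$ is equivalent to the statement that at least one $0,2$ pair occurs at positions $(r^*, r^*+1)$ with $r^* \geq 3$. I then split into cases according to the head $(a_1, a_2, a_3)$ and the positions of all $0,2$ pairs in the sequence.

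In the canonical case where $(a_1, a_2) = (1, 1)$ and a single $0,2$ pair occurs at positions $(r^*, r^*+1)$ with $r^* \geq 3$, I take $\mu$ to be the partition obtained by modifying $\la$ locally near this pair --- typically by replacing the two parts $3(l - r^*) + 2,\ 3(l - r^*) + 1$ (whose difference is $1$) together with the subsequent part by two parts with different differences, adjusting the tail so that the multiset of ladder contents is preserved (which forces $\mu^\reg = \la^\reg$). When $r^* = 3$ the resulting comparison $\ddeg \la > \ddeg \mu$ is precisely that of \cref{deglem5}; for $r^* > 3$ it follows from \cref{deglem5} together with repeated application of \cref{l419}, which lets one prepend a common leading part while strictly increasing the ratio $\ddeg \la / \ddeg \mu$. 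Variations in the head --- $(a_1, a_2) = (2, 1)$, or a $0,2$ pair already present at positions $(1,2)$ or $(2,3)$ --- are handled by the same template, respectively via \cref{deglem6} and \cref{deglem7}.

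The remaining configurations (multiple $0,2$ pairs at positions $\geq 3$, small $l$ such as $l = 3$, or the $0,2$ pair adjacent to the end of $\la$) are dealt with by iterating the local modification on the rightmost $0,2$ pair, or by appealing to the auxiliary comparisons \cref{deglem9,deglem10,deglem11,deglem12} for borderline parameter ranges. The main obstacle is the combinatorial case analysis itself: each admissible shape of $(a_1, \dots, a_{l+1})$ must be matched to the correct dimension lemma, and for each local modification one must check directly that $\la$ and $\mu$ have identical ladder-content multisets so that \cref{basicdeg} indeed applies.
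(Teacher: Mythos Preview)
Your overall strategy---find $\mu$ with $\mu^\reg=\la^\reg$ and $\ddeg\mu<\ddeg\la$, then apply \cref{basicdeg}, using \cref{l419} to reduce to base shapes handled by \cref{deglem5,deglem6,deglem7}---is exactly what the paper does. However, your organisation and several details are off.

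The paper's argument is a clean induction on $l$: one proves the stronger claim that $\mu$ can be chosen with $\mu_1=\la_1$ and $\mu\doms\la$. If some $a_r\neq1$ with $r\gs5$, strip off $\la_1$ (the truncated tuple $(a_2,\dots,a_{l+1})$ still satisfies all four hypotheses with $l$ replaced by $l-1$), apply induction, and prepend $\la_1$ via \cref{l419}. This leaves only the base case $a_r=1$ for all $r\gs5$ with $a_4\neq1$, which forces $(a_3,a_4)=(0,2)$ and hence $(a_1,\dots,a_{l+1})$ is one of $(1,1,0,2,1,\dots,1)$, $(2,1,0,2,1,\dots,1)$, $(0,2,0,2,1,\dots,1)$, dispatched by \cref{deglem5}, \cref{deglem6}, \cref{deglem7} respectively. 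Your description of \cref{deglem7} as covering ``a $0,2$ pair at positions $(1,2)$ or $(2,3)$'' is wrong: it covers the pattern $(0,2,0,2,1,\dots)$, i.e.\ pairs at \emph{both} $(1,2)$ and $(3,4)$. Your appeal to \cref{deglem9,deglem10,deglem11,deglem12} is also misplaced---those lemmas are used elsewhere in the paper and play no role here. Finally, you omit the handful of small-$l$ exceptions (the base lemmas have thresholds $l\gs4$, $l\in\{4\}\cup[7,\infty)$, $l\gs6$): for partitions such as $(9,6,2,1)$, $(10,6,2,1)$, $(16,12,8,7,3)$, $(19,15,11,10,6,3)$, $(8,7,2,1)$, $(11,10,5,4)$, $(14,13,8,7,3)$ one must exhibit $\mu$ by hand.
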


\begin{proof}
We claim that there is a partition $\mu$ such that $\mu\doms\la$, $\mu_1=\la_1$, $\mu^\reg=\la^\reg$ and $\ddeg\la>\ddeg\mu$; \cref{basicdeg} then implies that $\la$ is not homogeneous.

We prove our claim by induction on $l$. If there is $r\gs5$ for which $a_r\neq1$ then by induction we can assume that the claim holds with $\la$ replaced by $(\la_2,\la_3,\dots)$, and we can use \cref{l419}. So we assume instead that $a_r=1$ for all $r\gs5$, while $a_4\neq1$. The tuple $(a_1,\dots,a_{l+1})$ must therefore be one of $(1,1,0,2,1,1,\dots,1)$, $(2,1,0,2,1,1,\dots,1)$ or $(0,2,0,2,1,1,\dots,1)$. In these cases the claim follows from \cref{deglem5,deglem6,deglem7} respectively (except for the small cases $\la=(9,6,2,1)$, $(10,6,2,1)$, $(16,12,8,7,3)$, $(19,15,11,10,6,3)$, $(8,7,2,1)$, $(11,10,5,4)$, $(14,13,8,7,3)$, where we can take $\mu=(9,6,3)$, $(10,6,3)$, $(16,12,9,7,2)$, $(19,15,12,10,6,2)$, $(8,7,3)$, $(11,10,6,3)$, $(14,13,9,6,3)$ respectively).
\end{proof}

\subsection{Projective modules}

Now we describe a technique we will use to show that certain partitions are not homogeneous using projective modules. The basic lemma we use is the following; for this lemma, we can take $p$ to be any odd prime.

\begin{lemma}\label{projlem}
Suppose $\la$ and $\nu$ are strict partitions with $\la^\reg=\nu^\reg$.
\begin{enumerate}
\item
If there is a projective supermodule $P$ such that
\[
\frac{[P:\s^\la]}{[\P^{\la^\reg}:\s^\la]}>\frac{[P:\s^\nu]}{[\P^{\la^\reg}:\s^\nu]},
\]
then $\la$ is not homogeneous.
\item
If $\ddeg\la>\ddeg\nu$ then there is a projective supermodule $P$ such that 
\[
\frac{[P:\s^\la]}{[\P^{\la^\reg}:\s^\la]}>\frac{[P:\s^\nu]}{[\P^{\la^\reg}:\s^\nu]}.
\]
\end{enumerate}
\end{lemma}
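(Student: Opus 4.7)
For part (1), I would argue by contradiction using Brauer reciprocity. Suppose $\la$ is homogeneous, so $\s^\la\sim c\D^{\la^\reg}$ for some $c>0$; then $d_{\la\mu}=[\s^\la:\D^\mu]=0$ for every $\mu\neq\la^\reg$, and Brauer reciprocity forces $[\P^\mu:\s^\la]=0$ for every such $\mu$. Decomposing an arbitrary projective supermodule as $P\cong\bigoplus_\mu m_\mu\P^\mu$ then gives $[P:\s^\la]=m_{\la^\reg}[\P^{\la^\reg}:\s^\la]$, while $[P:\s^\nu]\geq m_{\la^\reg}[\P^{\la^\reg}:\s^\nu]$ just from the $\P^{\la^\reg}$-summand of $P$. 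Because $\la^\reg=\nu^\reg$, the denominators $[\P^{\la^\reg}:\s^\la]$ and $[\P^{\la^\reg}:\s^\nu]$ are both positive by \cref{L300921}, so dividing yields $[P:\s^\la]/[\P^{\la^\reg}:\s^\la]\leq[P:\s^\nu]/[\P^{\la^\reg}:\s^\nu]$, contradicting the hypothesis of~(1).

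For part (2), the plan is to look for a suitable $P$ among the individual PIMs: I claim some $P=\P^\mu$ works. Clearing denominators, the required inequality for $P=\P^\mu$ reads $[\P^\mu:\s^\la]\cdot[\P^{\la^\reg}:\s^\nu]>[\P^\mu:\s^\nu]\cdot[\P^{\la^\reg}:\s^\la]$; substituting the Brauer-reciprocity formula $[\P^\mu:\s^\tau]=2^{y(\mu)-x(\tau)}d_{\tau\mu}$, the type-correction $2$-powers on the two sides match and cancel, leaving the clean inequality $d_{\la\mu}d_{\nu,\la^\reg}>d_{\nu\mu}d_{\la,\la^\reg}$. To show some $\mu$ satisfies this, I would argue again by contradiction: assuming the opposite inequality $d_{\la\mu}/d_{\la,\la^\reg}\leq d_{\nu\mu}/d_{\nu,\la^\reg}$ for every restricted $p$-strict $\mu$ in the block (the denominators being positive by \cref{L300921}), I would multiply through by $\dim\D^\mu>0$, sum over $\mu$, and use $\dim\s^\tau=\sum_\mu d_{\tau\mu}\dim\D^\mu$ to deduce $\ddeg\la=\dim\s^\la/d_{\la,\la^\reg}\leq\dim\s^\nu/d_{\nu,\la^\reg}=\ddeg\nu$, contradicting $\ddeg\la>\ddeg\nu$. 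Hence such a $\mu$ exists, and $P=\P^\mu$ is the projective we want.

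The main nuisance I expect is the usual type-M versus type-Q bookkeeping in the spin setting: Brauer reciprocity contributes factors $2^{x-y}$ depending on the types of the standard and simple involved, and \cref{L300921} introduces further $2$-powers in $[\P^{\la^\reg}:\s^\la]$. The plan relies on the observation that in the four-term ratio $[P:\s^\la][\P^{\la^\reg}:\s^\nu]/([P:\s^\nu][\P^{\la^\reg}:\s^\la])$ every type correction appears symmetrically on numerator and denominator, so these $2$-powers cancel and the argument reduces cleanly to an inequality about plain decomposition numbers and the dimensions $\dim\D^\mu$; this is precisely what allows the hypothesis $\ddeg\la>\ddeg\nu$ to drive the conclusion without any parity obstruction.
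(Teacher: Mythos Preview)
Your proposal is correct and follows essentially the same approach as the paper. For part~(1) both you and the paper argue by contradiction, decompose $P$ into PIMs, and use that homogeneity forces $[\P^\mu:\s^\la]=0$ for $\mu\neq\la^\reg$; for part~(2) both arguments find a suitable $P=\P^\mu$ by expanding $\dim\s^\tau=\sum_\mu d_{\tau\mu}\dim\D^\mu$ and extracting a single $\mu$ witnessing the strict inequality, with the paper simply asserting the Brauer-reciprocity translation while you spell out explicitly why the type-$2$-powers cancel.
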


\needspace{3em}
\begin{proof}\indent
\begin{enumerate}
\vspace*{-\topsep}
\item
If $\la$ is homogeneous then by Brauer reciprocity $[\P^{\mu}:\s^{\la}]=0$ when $\mu\not=\la^\reg$. So if $P$ is any projective supermodule and we write $P=\bigoplus_\mu(\P^\mu)^{\oplus c_\mu}$, then
\begin{align*}
[P:\s^\la]=c_{\la^\reg}[\P^{\la^\reg}:\s^\la],
\\
[P:\s^\nu]\gs c_{\la^\reg}[\P^{\la^\reg}:\s^\nu],
\end{align*}
giving
\[
\frac{[P:\s^\la]}{[\P^{\la^\reg}:\s^\la]}\ls\frac{[P:\s^\nu]}{[\P^{\la^\reg}:\s^\nu]}.
\]
\item
Recall that
\[
\ddeg\la=\frac{\dim\s^\la}{[\s^\la:\D^{\la^\reg}]}=\frac{\sum_\mu[\s^\la:\D^\mu]\dim\D^\mu}{[\s^\la:\D^{\la^\reg}]}.
\]
So if $\ddeg\la>\ddeg\nu$ then there is some $\mu$ for which
\[
\frac{[\s^\la:\D^\mu]}{[\s^\la:\D^{\la^\reg}]}>\frac{[\s^\nu:\D^\mu]}{[\s^\nu:\D^{\la^\reg}]}.
\]
By Brauer reciprocity, this is the same as saying
\[
\frac{[\P^\mu:\s^\la]}{[\P^{\la^\reg}:\s^\la]}>\frac{[\P^\mu:\s^\nu]}{[\P^{\la^\reg}:\s^\nu]}.\qedhere
\]
\end{enumerate}
\end{proof}

The way we will apply \cref{projlem} is as follows. Suppose we have a strict partition $\la$ that we want to prove is not homogeneous. We will find another strict partition $\nu$ with $\la^\reg=\nu^\reg$, and a pair of smaller partitions $\la_-$ and $\nu_-$ with $\la_-^\reg=\nu_-^\reg$ and $\ddeg{\la_-}>\ddeg{\nu_-}$. Using part (ii) of \cref{projlem}, we will take a projective module $Q$ for which 
\[
\frac{[Q:\s^{\la_-}]}{[\P^{\la_-^\reg}:\s^{\la_-}]}>\frac{[Q:\s^{\nu_-}]}{[\P^{\la_-^\reg}:\s^{\nu_-}]},
\]
and apply a suitable induction functor to obtain a projective module $P$ with
\[
\frac{[P:\s^\la]}{[\P^{\la^\reg}:\s^\la]}>\frac{[P:\s^\nu]}{[\P^{\la^\reg}:\s^\nu]}.
\]
Then part (i) of \cref{projlem} gives the desired result.

The specific cases where we use this technique are in the following \lcnamecref{projcases}.

\begin{propn}\label{projcases}
Suppose $p=3$, and $\la$ is one of the partitions
\[
(9,7,4),(12,10,7,4),(15,13,10,7,4),(14,13,9,6,3),(17,16,12,9,6,3),(11,10,6,3).
\]
Then $\la$ is not homogeneous.
\end{propn}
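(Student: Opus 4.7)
The plan is to apply \cref{projlem}(i) to each $\la$ in the list. For each such $\la$, I would first compute $\la^\reg$ via the ladder regularisation and select a companion strict partition $\nu\ne\la$ with $\nu^\reg=\la^\reg$ by enumerating strict partitions with the correct ladder counts. Next, taking $\la_-=\la^{-i}$ and $\nu_-=\nu^{-i}$ for a suitably chosen residue $i\in I$ (the same $i$ for both), I would verify $\la_-^\reg=\nu_-^\reg$ and $\ddeg{\la_-}>\ddeg{\nu_-}$; the degree inequality is established either directly via Schur's bar-length formula or by appeal to one of the dimension lemmas of \cref{barlensec}. For example, for $\la=(9,7,4)$ one can take $\nu=(10,7,2,1)$, giving $\la_-=(8,6,3)$, $\nu_-=(9,6,2)$ and the ratio $\ddeg{\la_-}/\ddeg{\nu_-}=50/49$.

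Next, \cref{projlem}(ii) supplies a projective $\calt_{|\la_-|}$-supermodule $Q$ satisfying the ratio inequality at the smaller rank. I would then take $P=\Ind_i^{\heps_i(\la)}Q$ (noting $\heps_i(\la)=\heps_i(\nu)$ by the construction of $\la_-,\nu_-$ as $\la^{-i},\nu^{-i}$), which is projective of rank $|\la|$. To transfer the ratio inequality from $Q$ to $P$, I would invoke the Brundan--Kleshchev branching rule (\cref{branchingrule} and \cref{branchs}): the extremal restriction $\Res_i^{\heps_i(\la)}\s^\la\sim a\s^{\la_-}$ controls how $[P:\s^\la]$ relates to the multiplicities of $Q$, while \cref{L300921} identifies the normalising denominators $[\P^{\la^\reg}:\s^\la]$ and $[\P^{\la^\reg}:\s^\nu]$. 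Once the ratio inequality on $P$ is verified, \cref{projlem}(i) concludes that $\la$ is not homogeneous.

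The main obstacle is that none of the six partitions $\la$ in the list can be written as $\mu^{+j}$ for any strict $\mu$ and $j\in I$; for instance $(9,7,4)^{-0}=(8,6,3)$ but $(8,6,3)^{+0}=(9,7,4,1)\ne(9,7,4)$, because $\hphi_0((8,6,3))=4$ exceeds $\heps_0((9,7,4))=3$ (the \sadbl $0$-node $(4,1)$ of $(8,6,3)$ is not matched by a \srmbl $0$-node of $(9,7,4)$). The cleanest ``extremal'' lift, in which $\Ind_i^{\hphi_i(\la_-)}\s^{\la_-}$ is a multiple of a single $\s^\la$, is therefore never available, and in each of the six cases one must carry out a more delicate non-extremal analysis --- enumerating the paths of $i$-node additions from $\la_-$ to $\la$ (and from $\nu_-$ to $\nu$) via iterated branching, and tracking the parity factors from \cref{branchingrule}. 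I expect the two families $\{(9,7,4),(12,10,7,4),(15,13,10,7,4)\}$ and $\{(11,10,6,3),(14,13,9,6,3),(17,16,12,9,6,3)\}$ to be handled by parallel constructions, with the companions for the larger partitions built from those of the smaller by prepending parts divisible by $3$.
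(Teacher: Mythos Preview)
Your proposal contains a computational error that, once fixed, reveals a structural obstruction to the whole plan. For $\la=(9,7,4)$ you claim $\la^{-0}=(8,6,3)$, but you have missed the strictly-removable $0$-nodes $(2,6)$ and $(3,3)$: removing $(2,7)$ and $(2,6)$ together yields $(9,5,4)$, and removing $(3,4)$ and $(3,3)$ together yields $(9,7,2)$, both strict. So $\heps_0(\la)=5$ and $\la^{-0}=(8,5,2)$. The same computation for your companion $\nu=(10,7,2,1)$ gives $\heps_0(\nu)=5$ and $\nu^{-0}=(8,5,2)$ as well. Thus $\la_-=\nu_-$, the ratio $\ddeg{\la_-}/\ddeg{\nu_-}$ equals~$1$, and there is nothing for \cref{projlem}(ii) to bite on. This is not an accident: $(8,5,2)$ is itself a $3$-bar core, hence the unique strict partition with its regularisation, so \emph{any} choice of $\nu$ with $\nu^\reg=\la^\reg$ will satisfy $\nu^{-0}=\la^{-0}$. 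The same phenomenon occurs for $(12,10,7,4)$ and $(15,13,10,7,4)$. Your recognition of an obstacle in the final paragraph is on the right track, but the diagnosis (``$\la\ne(\la_-)^{+0}$'') is not the real issue; the issue is that the extremal restriction collapses $\la$ and $\nu$ to the same partition.

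The paper's argument avoids this by using a \emph{non-extremal} power of induction. For the first family one takes $\nu=(3l+1\df7,\,3)$ and works with $r=2l-2<\heps_0(\la)$. At this intermediate level the strict partitions that reach $\la$ (equivalently $\nu$) by adding $r$ $0$-nodes are precisely $\mu^j=(3l-1\df2)+\de_j$ for $1\le j\le l$, all with the same regularisation. One checks directly that the branching multiplicities $[\Ind_0^{r}\s^{\mu^j}:\s^\la]$ and $[\Ind_0^{r}\s^{\mu^j}:\s^\nu]$ agree for $1<j<l$ and are interchanged for $j\in\{1,l\}$, so the inequality $[Q:\s^{\mu^l}]>[Q:\s^{\mu^1}]$ (obtained from $\ddeg{\mu^l}>\ddeg{\mu^1}$ via \cref{projlem}(ii)) transfers to $[P:\s^\la]>[P:\s^\nu]$ for $P=\Ind_0^{r}Q$. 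The second family $(14,13,9,6,3)$, $(17,16,12,9,6,3)$ is handled by the same scheme with $r=l-1$ and the pair $(\mu^2,\mu^3)$ in place of $(\mu^1,\mu^l)$. For $(11,10,6,3)$ the paper takes yet another route: it uses the explicitly known projective $\P^{(8,5,3,2)}$ (from M\"uller) and applies $\Ind_0^3\Ind_1^3\Ind_0^6$, choosing $\nu=(11,10,7,2)$ so that $[P:\s^\nu]=0$ while $[P:\s^\la]>0$.
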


\begin{proof}
We begin with the first three cases. So let $\la=(3l,\ 3l-2\df4)$ with $3\ls l\ls 5$, and set $\nu=(3l+1\df7,\ 3)$. Then $\la^\reg=\nu^\reg$ and $[\P^{\la^\reg}:\s^\la]=[\P^{\la^\reg}:\s^\nu]$ by \cref{L300921}. Also define $\mu^i=(3l-1\df2)+\de_i$ for $1\ls i\ls l$, where $\delta_i$ is the composition which has a $1$ in position $i$ and $0$ elsewhere. It is easy to check that $(\mu^1)^\reg=(\mu^l)^\reg$ and $\ddeg{\mu^l}>\ddeg{\mu^1}$. In addition, \cref{L300921} shows that $[\P^{(\mu^1)^\reg}:\s^{\mu^1}]=[\P^{(\mu^1)^\reg}:\s^{\mu^l}]$, so by \cref{projlem}(ii) we can find a projective module $Q$ with $[Q:\s^{\mu^l}]>[Q:\s^{\mu^1}]$. Now define $P=\Ind_0^{2l-2}Q$. Then
\begin{align*}
[P:\s^\la]&=\sum_{i=1}^l[Q:\s^{\mu^i}][\Ind_0^{2l-2}\s^{\mu^i}:\s^\la],\\
[P:\s^\nu]&=\sum_{i=1}^l[Q:\s^{\mu^i}][\Ind_0^{2l-2}\s^{\mu^i}:\s^\nu].
\end{align*}
The branching rule shows that if $1<i<l$ then $[\Ind_0^{2l-2}\s^{\mu^i}:\s^\la]=[\Ind_0^{2l-2}\s^{\mu^i}:\s^\nu]$ and
\[
[\Ind_0^{2l-2}\s^{\mu^l}:\s^\la]=[\Ind_0^{2l-2}\s^{\mu^1}:\s^\nu]>[\Ind_0^{2l-2}\s^{\mu^1}:\s^\la]=[\Ind_0^{2l-2}\s^{\mu^l}:\s^\nu],
\]
so that $[P:\s^\la]>[P:\s^\nu]$. So by \cref{projlem}(i) $\la$ is not homogeneous.

Next consider the last two cases, so $\la=(3l-1,3-2,\ 3l-6\df3)$ with $5\leq l\leq 6$. Let $\nu=(3l-1,3l-3,3l-5,\ 3l-9\df3)$ and again $\mu^i=(3l-1\df2)+\de_i$. Then we can conclude in the same way using $\mu^2$, $\mu^3$ and $\Ind_0^{l-1}$ instead of $\mu^l$, $\mu^1$ and $\Ind_0^{2l-2}$ respectively.

Finally consider the case $\la=(11,10,6,3)$. Here we take a slightly different (and simpler) approach. We let $\nu=(11,10,7,2)$. Then $\P^{(8,5,3,2)}\sim\s^{(11,5,2)}\oplus(\s^{(8,5,3,2)})^{\oplus2}$ (looking at decomposition matrices or by \cite[Theorem 4.4]{Mu}). The branching rule shows that $[\Ind_0^3\Ind_1^3\Ind_0^6\P^{(8,5,3,2)}:\s^\nu]=0$, while $[\Ind_0^3\Ind_1^3\Ind_0^6\P^{(8,5,3,2)}:\s^\la]$ is a non-zero multiple of $[\P^{(8,5,3,2)}:\s^{(8,5,3,2)}]$. So \cref{projlem} applies to show that $\la$ is not homogeneous.
\end{proof}

\section{Wreath products and RoCK blocks}\label{wreathsec}

In this section we analyse certain wreath product algebras that arise in the work of Kleshchev and Livesey \cite{KL} on RoCK blocks, and use these to derive information on modules labelled by special partitions.

\subsection{Wreath products}

First we recall some more theory of superalgebras; let $\F$ be an arbitrary field throughout this section. For any vector superspace $V$ and $0\not=v$ a homogeneous vector in $V$ we let $\overline{v}\in\{0,1\}$ be the parity of $v$. We use the usual tensor product rule for superalgebras $A$ and $B$: if $a_1,a_2\in A$ and $b_1,b_2\in B$ are all homogeneous, then $(a_1\otimes b_1)(a_2\otimes b_2)=(-1)^{\ol{b_1}\ol{a_2}}(a_1a_2)\otimes(b_1b_2)$ in $A\otimes B$.

We will be mainly concerned with the superalgebra $A_\bbf=\F[u]/(u^3)$, where the generator $u$ is homogeneous of odd degree. We usually write write $A_\bbf$ just as $A$ if $\bbf$ is understood. For any $d\gs0$ we define the wreath superproduct $A\wrs\sym_d$ as in \cite[\S2.2a]{KL}: as a vector superspace this is $A^{\otimes d}\otimes\F\sym_d$ with $\sym_d$ in even degree. The multiplication rule is defined by the tensor product rule above for multiplying in $A^{\otimes d}$, together with
\[
(1^{\otimes d};\si)(v_1\otimes\dots\otimes v_d;\pi)=(-1)^{[\si;v_1,\dots,v_d]}(v_{\si^{-1}(1)}\otimes\dots\otimes v_{\si^{-1}(d)};\si\pi),
\]
for $\si,\pi\in\sym_d$ and homogeneous $v_1,\dots,v_d\in A$, where
\[
[\si;v_1,\dots,v_d]=|\{(i,j)\mid1\leq i<j\leq d,\ \si(i)>\si(j),\ \overline{v_i}=1=\overline{v_j}\}|.
\]

Our aim in this section is to develop a small part of the representation theory of $\awrd$ (mimicking the work of Chuang and Tan \cite{CT}) to be able to apply the results from \cite{KL}; in particular, we will be interested in finding bounds for the diagonal Cartan invariants of $\awrd$.

We begin by constructing some $\awrd$-supermodules. (We will view $\awrd$ as a superalgebra, although as we shall see every simple supermodule is of type M, so it makes little difference whether we consider modules or supermodules.) For any $A$-supermodule $V$ and $\F\sym_d$-module $W$ (viewed as a supermodule concentrated in degree $0$) we define a module $T(V,W)$, which is $V^{\otimes d}\otimes W$ as a vector superspace, with the action of $A\wrs\sym_d$ given by
\begin{align*}
(1^{\otimes(i-1)}\otimes u\otimes1^{\otimes(d-i)};1)v_1\otimes\dots\otimes v_d\otimes w&=(-1)^{\sum_{j=1}^{i-1}\overline{v_d}}v_1\otimes v_{i-1}\otimes\dots\otimes uv_i\otimes v_{i+1}\otimes\dots\otimes v_d\otimes w\\
(1\otimes\dots\otimes1;\si)v_1\otimes\dots\otimes v_d\otimes w&=(-1)^{[\si;v_1,\dots,v_d]}v_{\si^{-1}(1)}\otimes\dots\otimes v_{\si^{-1}(d)}\otimes \si w
\end{align*}
for homogeneous $v_1,\dots,v_d\in V$, $w\in W$ and $\si\in\sym_d$.

This construction allows us to classify the simple supermodules for $\awrd$: let $J(R)$ denote the Jacobson radical of a ring $R$. It is clear that any element of $\awrd$ of the form $(1^{\otimes(i-1)}\otimes u\otimes1^{\otimes(d-i)};1)$ or of the form $(1^{\otimes d};m)$ with $m\in J(\F\sym_d)$ lies in $J(A\wrs\sym_d)$. Hence $\awrd/J(\awrd)\cong\F\sym_d/J(\F\sym_d)$, and every simple $\awrd$-module arises by taking a simple $\F\sym_d$-module and letting $u$ act as $0$. In other words, the simple $\awrd$-modules are the modules $T(\F,S)$ as $S$ ranges over the simple $\F\sym_d$-modules, where $\F$ is regarded as an $A$-module with $u$ acting as $0$. In particular, if $\nchar\F=0$ then the simple $\F\sym_d$-modules are the \emph{Specht modules} $\spe\la$ for partitions $\la$ of $d$, while if $\nchar\F=p>0$ then the simple $\F\sym_d$-modules are the \emph{James modules} $\jms\la$ for $p$-regular partitions $\la$.

We will adapt techniques from \cite{CT} to find the composition factors of $T(A,\spe\de)$ in characteristic $0$, which will enable us to find the diagonal Cartan invariants. First we introduce another construction: if $M$ is an $\awrd$-module and $W$ an $\F\sym_d$-module then we define the $\awrd$-module $M\oslash W$ to be the vector space $M\otimes W$ with
\[
(u_1\otimes\dots\otimes u_d;\si)(m\otimes w)=((u_1\otimes\dots\otimes u_d;\si)m)\otimes \si w.
\]
It is clear from the definition that $T(V,W)\oslash W'\cong T(V,W\otimes W')$.

The regular $A$-module has three composition factors all isomorphic to $\F$; we will use this to find filtrations and composition factors of the modules $T(A,\spe\nu)$. Given partitions $\al,\be,\ga,\nu$ with $|\al|+|\be|+|\ga|=|\nu|$, let $c^\nu_{\al\be\ga}$ denote the corresponding Littlewood--Richardson coefficient, i.e.\ the multiplicity of $\spe\nu$ as a composition factor of $\spe\al\otimes\spe\be\otimes\spe\ga\uparrow_{\sym_{|\al|,|\be|,|\ga|}}^{\sym_|\nu|}$ in characteristic $0$; here and below, $\sym_{a,b,c}$ denotes the Young subgroup $\sym_a\times\sym_b\times\sym_c\ls\sym_{a+b+c}$. For a partition $\be$ let $\be'$ denote the conjugate (or transpose) partition.

Now we can give the composition factors of the modules of the modules $T(A,\spe\nu)$ in characteristic~$0$.

\begin{lemma}\label{L210322_7}
Suppose $\nchar\F=0$, and let $\nu,\pi$ be partitions of $d$. Then
\[
[T(A,\spe\nu):T(\F,\spe\pi)]=\sum_{\substack{\al,\be,\ga\in\Par\\|\al|+|\be|+|\ga|=d}}c^\nu_{\al\be\ga}c^\pi_{\al\be'\ga}.
\]
\end{lemma}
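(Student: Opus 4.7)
The plan is to adapt the Chuang--Tan argument to this super setting, using the grading on $A$ in which $\deg u=1$. Extending this to $A^{\otimes d}$, the decreasing filtration by graded pieces $F^{\geq k}$ is stable under multiplication by each $u_i$ (which strictly raises the grading) and under the $\sym_d$-action (which preserves it), so it defines a filtration of $T(A,\spe\nu)=A^{\otimes d}\otimes\spe\nu$ by $\awrd$-supermodules. On the associated graded every $u_i$ acts as zero, so each composition factor is of the form $T(\bbf,S)$ for a simple $\sym_d$-module $S$, and it suffices to compute the multiplicity of $\spe\pi$ as an $\sym_d$-constituent of this associated graded.

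The next step is to identify the associated graded as an $\sym_d$-module. It splits as $\bigoplus_{a+b+c=d}M_{a,b,c}\otimes\spe\nu$, where $M_{a,b,c}$ is spanned by tensors with exactly $a$ slots equal to $1$, $b$ to $u$ and $c$ to $u^2$. The ``standard'' basis vector of $M_{a,b,c}$, with the three types of slot in order, is stabilised by $\sym_a\times\sym_b\times\sym_c$, and the super-permutation sign $(-1)^{[\si;v_1,\dots,v_d]}$ reduces in each factor to the sign of the permutation of like-parity tensor factors: this gives the trivial representation on $\sym_a$ and $\sym_c$ (whose slots are even) and the sign representation on $\sym_b$ (whose slots are odd). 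So $M_{a,b,c}\cong\operatorname{Ind}_{\sym_{a,b,c}}^{\sym_d}(\bbf\boxtimes\sgn\boxtimes\bbf)$ as $\sym_d$-modules, and
\[
[T(A,\spe\nu):T(\bbf,\spe\pi)]=\sum_{a+b+c=d}\bigl[\operatorname{Ind}_{\sym_{a,b,c}}^{\sym_d}(\bbf\boxtimes\sgn\boxtimes\bbf)\otimes\spe\nu:\spe\pi\bigr].
\]

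Finally, each summand is a standard Littlewood--Richardson computation. Using self-duality of Specht modules in characteristic $0$ and Frobenius reciprocity, it equals
\[
\dim\operatorname{Hom}_{\sym_{a,b,c}}\bigl((\spe\pi\otimes\spe\nu)\da_{\sym_{a,b,c}},\ \bbf\boxtimes\sgn\boxtimes\bbf\bigr).
\]
Expanding $\spe\pi\da$ and $\spe\nu\da$ by the LR branching rule, the Hom space factorises over the three groups: on $\sym_a$ and $\sym_c$, Schur orthogonality forces the $\al$- and $\ga$-parts coming from $\pi$ and $\nu$ to agree, while on $\sym_b$ the identity $\spe\be\otimes\sgn=\spe{\be'}$ forces the $\be$-parts to be conjugate. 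Summing over the surviving triples yields $\sum_{\al,\be,\ga}c^\pi_{\al\be'\ga}c^\nu_{\al\be\ga}$, which after relabelling $\be\mapsto\be'$ is the stated formula.

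The main obstacle I expect is the super-sign bookkeeping: verifying that the grading filtration really consists of $\awrd$-sub-supermodules (rather than merely $A^{\otimes d}$-submodules), and that the sign $(-1)^{[\si;\,\cdot\,]}$ assembles cleanly into the sign representation on the $\sym_b$ factor rather than into some more complicated twist. Once this is checked, the rest is a routine Frobenius-reciprocity-plus-LR calculation essentially identical to that of Chuang--Tan.
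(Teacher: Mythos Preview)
Your proposal is correct and follows essentially the same route as the paper: both arguments filter $A^{\otimes d}$ by total $u$-degree, identify each associated graded piece $M_{a,b,c}$ with $\operatorname{Ind}_{\sym_{a,b,c}}^{\sym_d}(\bbf\boxtimes\sgn\boxtimes\bbf)$ via the super-sign computation you flag as the main obstacle, and then finish with the Littlewood--Richardson rule. The only cosmetic differences are that the paper packages the tensoring with $\spe\nu$ through the operation $\oslash$ and organises the final LR step as ``restrict $\spe\nu$, twist by $\sgn$, induce, compare with $\spe\pi$'' rather than your Frobenius-reciprocity form, but these are the same calculation.
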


\begin{proof}
We adapt the results in \cite[\S4]{CT} to wreath superproducts, specialising to the case of $A\wrs\sym_d$.

Because $\spe{(d)}$ is the trivial $\sym_d$-module, we can write $T(A,\spe\nu)=T(A,\spe{(d)})\oslash\spe\nu$. So if $0\leq M_1\leq\dots\leq M_h=T(A,\spe{(d)})$ is a filtration of $T(A,\spe{(d)})$, then
\[
0\leq M_1\oslash\spe\nu\leq\dots\leq M_h\oslash\spe\nu
\]
is a filtration of $T(A,\spe\nu)$. We thus start by finding a filtration of $T(A,\spe{(d)})$. As vector spaces we identify $T(A,\spe{(d)})$ and $A^{\otimes d}$.

The vector space $A^{\otimes d}$ has a basis $U=\{u^{e_1}\otimes\dots\otimes u^{e_d}\mid0\leq e_i\leq 2\text{ for each }i\}$. For any composition $(a,b,c)$ of $d$ let $U_{a,b,c}$ be the subset of $U$ consisting of all basis elements $u^{e_1}\otimes\dots\otimes u^{e_d}$ in which $1,u,u^2$ appear $a,b,c$ times respectively.

From the filtration $0\subseteq\langle u^2\rangle\subseteq\langle u,u^2\rangle\subseteq A$ of $A$ we have a filtration
\[0=M_0\leq\dots\leq M_{2d+1}=T(A,\spe{(d)})\]
where $M_i=\langle U_{a,b,c}\mid b+2c\geq 2d+1-i\rangle$. For any $(a,b,c)$ with $b+2c=2d+1-i$, let $V_{a,b,c}= (\langle U_{a,b,c}\rangle_\F\oplus M_{i-1})/M_{i-1}$. Then
\[
\frac{M_i}{M_{i-1}}\cong \bigoplus_{\substack{a,b,c\gs0\\b+2c=2d+1-i}}V_{a,b,c},
\]
so that $T(A,\spe{(d)})$ is filtered by the modules $V_{a,b,c}$ as $(a,b,c)$ ranges over compositions of $d$. We will show that
\begin{equation}\label{E210322}
V_{a,b,c}\cong T\left(\F,(\spe{(a)}\otimes\spe{(1^b)}\otimes\spe{(c)})\ua_{\sym_{a,b,c}}^{\sym_d}\right).
\end{equation}
It is easy to see that any vector of the form $(1^{\otimes(i-1)}\otimes u\otimes1^{\otimes(d-i)};1)$ acts as 0 on both modules. So we only need to compare the action of the elements $(1^{\otimes d};\si)$.

Let $v$ be the image of $1^{\otimes a}\otimes u^{\otimes b}\otimes (u^2)^{\otimes c}$ in $V_{a,b,c}$ and $R$ be a set of representatives of $\sym_d/\sym_{a,b,c}$. Then $\{(1^{\otimes d};\pi)v\mid \pi\in R\}$ is a basis of $V_{a,b,c}$. To work out the action of $\sym_d$ on this basis, take $\si\in\sym_d$ and $\pi\in R$, and let $\rho\in R$ be such that $\si\pi\in\rho\sym_{a,b,c}$. Then $\rho^{-1}\si\pi\in\sym_{a,b,c}$; we let $\ol\si$ be the permutation induced by $\rho^{-1}\si\pi$ on $\{a+1,\dots,a+b\}$.

Then
\[
(1^{\otimes d};\si)(1^{\otimes d};\pi)v=(1^{\otimes d};\rho)(1^{\otimes d};\rho^{-1}\si\pi)v=(1^{\otimes d};\rho)(-1)^{\sgn(\overline{\si})}v=(-1)^{\sgn(\overline{\si})}(1^{\otimes d};\rho)v.
\]
This action coincides with the action of $\sym_d$ on $(\spe{(a)}\otimes\spe{(1^b)}\otimes\spe{(c)})\ua_{\sym_{a,b,c}}^{\sym_d}$ (with basis $\{\pi w\mid\pi\in R\}$ where $w$ is any non-zero element of $\spe{(a)}\otimes\spe{(1^b)}\otimes\spe{(c)}$), so that \eqref{E210322} follows.

Thus $T(A,\spe{(d)})$ has a filtration with a factor isomorphic to $T(\F,(\spe{(a)}\otimes\spe{(1^b)}\otimes\spe{(c)})\ua_{\sym_{a,b,c}}^{\sym_d})$ for each composition $(a,b,c)$ of $d$. Hence $T(A,\spe\nu)\cong T(A,\spe{(n)})\oslash\spe\nu$ has a filtration with a factor isomorphic to
\begin{align*}
T(\F,(\spe{(a)}\otimes\spe{(1^b)}\otimes\spe{(c)})\ua_{\sym_{a,b,c}}^{\sym_d})\oslash\spe\nu&\cong T(\F,((\spe{(a)}\otimes\spe{(1^b)}\otimes\spe{(c)})\ua_{\sym_{a,b,c}}^{\sym_d})\otimes\spe\nu)\\
&\cong T(\F,((\spe{(a)}\otimes\spe{(1^b)}\otimes\spe{(c)})\otimes\spe\nu\da^{\sym_d}_{\sym_{a,b,c}})\ua_{\sym_{a,b,c}}^{\sym_d})
\end{align*}
for each $(a,b,c)$. Now
\begin{align*}
(\spe{(a)}\otimes\spe{(1^b)}\otimes\spe{(c)})\otimes\spe\nu\da^{\sym_d}_{\sym_{a,b,c}}
&\cong\bigoplus_{\substack{\al\in\Par(a)\\\be\in\Par(b)\\\ga\in\Par(c)}}(\spe{(a)}\otimes\spe{(1^b)}\otimes\spe{(c)})\otimes(\spe\al\otimes\spe\be\otimes\spe\ga)^{\oplus c^\nu_{\al\be\ga}}
\\
&\cong\bigoplus_{\substack{\al\in\Par(a)\\\be\in\Par(b)\\\ga\in\Par(c)}}(\spe\al\otimes\spe{\be'}\otimes\spe\ga)^{\oplus c^\nu_{\al\be\ga}}
\end{align*}
so that
\[
\left[T(\F,(\spe{(a)}\otimes\spe{(1^b)}\otimes\spe{(c)})\ua_{\sym_{a,b,c}}^{\sym_d})\oslash\spe\nu:T(\F,\spe\pi)\right]=\sum_{\substack{\al\in\Par(a)\\\be\in\Par(b)\\\ga\in\Par(c)}}c^\nu_{\al\be\ga}c^\pi_{\al\be'\ga},
\]
and the result follows.
\end{proof}

Now we turn to Cartan invariants.

\begin{lemma}\label{L210322_6}
The projective $\awrd$-modules are precisely the modules $T(A,P)$, for $P$ a projective $\F\sym_d$-module. If $S$ is a simple $\F\sym_d$-module with projective cover $P(S)$, then $T(A,P(S))$ is the projective cover of $T(\F,S)$.
\end{lemma}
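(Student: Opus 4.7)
The plan is to identify the functor $T(A,-)$ as induction from a subsuperalgebra. Embedding $\F\sym_d$ into $\awrd$ via $\si\mapsto(1^{\otimes d};\si)$ realises $\awrd$ as a free right $\F\sym_d$-module of rank $3^d$, with basis the pure tensors $(v_1\otimes\cdots\otimes v_d;1)$ for $v_1,\dots,v_d$ running over a homogeneous basis of $A$. The first step is to establish an isomorphism
\[
T(A,W)\ \cong\ \awrd\otimes_{\F\sym_d}W
\]
of $\awrd$-modules for any $\F\sym_d$-module $W$. The underlying vector spaces agree (both equal $A^{\otimes d}\otimes W$ after using the basis above), and the actions of the two types of generators $(1^{\otimes(i-1)}\otimes u\otimes 1^{\otimes(d-i)};1)$ and $(1^{\otimes d};\si)$ coincide: the sign $(-1)^{\sum_{j<i}\ol{v_j}}$ in the definition of $T(A,W)$ is precisely what the supertensor rule produces when moving $u$ past $v_1,\dots,v_{i-1}$, and the sign $(-1)^{[\si;v_1,\dots,v_d]}$ is exactly what emerges from the wreath-product multiplication rule.

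Once this identification is in place, the lemma reduces to formal properties of induction from a subalgebra. Since $\awrd$ is free, hence projective, as a right $\F\sym_d$-module, the functor $\awrd\otimes_{\F\sym_d}-$ is exact and sends projective $\F\sym_d$-modules to projective $\awrd$-modules. This gives the implication $P$ projective $\Rightarrow$ $T(A,P)$ projective.

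To identify the head of $T(A,P(S))$, I would observe that the augmentation ideal $K\subset A^{\otimes d}$ (the kernel of the map sending every $u$ to $0$) is $\sym_d$-stable and nilpotent, so it generates a nilpotent two-sided ideal of $\awrd$ with quotient $\F\sym_d$. This ideal lies in $J(\awrd)$, giving $\awrd/J(\awrd)\cong\F\sym_d/J(\F\sym_d)$ and showing that the simple $\awrd$-modules are precisely the inflations $T(\F,S)$ for $S$ simple. Applied to $T(A,P(S))$, this ideal produces a submodule contained in the radical whose quotient is $T(\F,P(S))$, the inflation of $P(S)$ along $\awrd\twoheadrightarrow\F\sym_d$. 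Since $P(S)$ has simple head $S$, the module $T(\F,P(S))$, and so also $T(A,P(S))$, has simple head $T(\F,S)$. Being projective with simple head, $T(A,P(S))$ is the projective cover of $T(\F,S)$.

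It follows that as $S$ ranges over a complete set of simple $\F\sym_d$-modules, the modules $T(A,P(S))$ exhaust the projective indecomposable $\awrd$-modules up to isomorphism; every projective $\awrd$-module is a direct sum of these, which takes the form $T(A,P)$ with $P=\bigoplus_S P(S)^{\oplus m_S}$ projective. The main technical obstacle throughout is the sign bookkeeping needed to verify the induction isomorphism $T(A,W)\cong\awrd\otimes_{\F\sym_d}W$; once that is done, the remaining arguments are essentially formal.
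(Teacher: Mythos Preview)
Your argument is correct but takes a different route from the paper's. The paper avoids the induction-functor identification entirely: it simply observes that $T(A,-)$ is additive and $T(A,\F\sym_d)\cong\awrd$, so $T(A,P)$ is projective whenever $P$ is a summand of a free $\F\sym_d$-module. This yields a decomposition $\awrd=\bigoplus_S T(A,P(S))^{\oplus\dim S}$; since the simple $\awrd$-modules are the $T(\F,S)$ with $\dim T(\F,S)=\dim S$, comparison with the standard decomposition of the regular module into PIMs forces each $T(A,P(S))$ to be indecomposable by Krull--Schmidt. The paper then just checks that $T(\F,S)$ is a quotient of $T(A,P(S))$. Your approach, by contrast, realises $T(A,-)$ as $\awrd\otimes_{\F\sym_d}-$ and computes the head of $T(A,P(S))$ directly via the nilpotent augmentation ideal, bypassing any counting. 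The paper's version is shorter and sidesteps the sign verification you flag as the main technical point, at the cost of relying on a slightly indirect Krull--Schmidt argument; your version is more transparent about why the head is simple and would generalise more readily to settings where dimensions are less convenient to track.
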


\begin{proof}
As $T(A,\F\sym_d)\cong A\wrs\sym_d$ and $T(A,P\oplus Q)=T(A,P)\oplus T(A,Q)$ it is clear that the modules $T(A,P)$ are projective, and we have a direct sum decomposition
\[
\awrd=\bigoplus_ST(A,P(S))^{\oplus\dim S},
\]
summing over all simple $\F\sym_d$-modules $S$. The simple $\awrd$-modules are the modules $T(\F,S)$, with $\dim T(\F,S)=\dim S$ for each $S$. So the summands in the expression above are indecomposable, and therefore every projective module occurs as $T(A,P)$ for some projective $P$. Moreover, it is easy to check that $T(\F,S)$ occurs as a quotient of $T(A,P(S))$, and therefore $T(A,P(S))$ is the projective cover of $T(\F,S)$.
\end{proof}

Now we can explicitly find the diagonal Cartan invariants for $\awrd$ in characteristic $0$. Let $C=(c_{\nu,\pi})$ be the Cartan matrix of $A\wrs\sym_d$ with rows and columns indexed by partitions of $d$, so that $c_{\nu,\pi}$ is the multiplicity of $T(\F,\spe\nu)$ as a composition factor of the projective cover of $T(\F,\spe\pi)$.

\begin{lemma}\label{L210322_5}
Suppose $\nchar\F=0$ and $\nu\in\Par(d)$. Then $c_{\nu,\nu}\geq 2d+1$, with equality \iff $\ga=(d)$ or $(1^d)$.
\end{lemma}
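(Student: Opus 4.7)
The plan is to apply Lemma~\ref{L210322_7} directly, splitting the sum $c_{\nu,\nu}=\sum_{\al,\be,\ga}c^\nu_{\al\be\ga}c^\nu_{\al\be'\ga}$ according to the value of $b=|\be|$. When $b\leq 1$ one has $\be=\be'$, so the contribution from the terms with a fixed composition $(a,b,c)$ of $d$ is
\[
\sum_{|\al|=a,|\be|=b,|\ga|=c}(c^\nu_{\al\be\ga})^2=\dim\operatorname{End}_{\sym_{a,b,c}}\bigl(\spe\nu\da_{\sym_{a,b,c}}\bigr),
\]
which is at least~$1$ since $\spe\nu\da$ is non-zero. There are $d+1$ compositions with $b=0$ and $d$ with $b=1$, and the contributions with $b\geq 2$ are non-negative, so we obtain $c_{\nu,\nu}\geq 2d+1$ at once.

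For the equality statement I would first verify that $c_{(d),(d)}=c_{(1^d),(1^d)}=2d+1$ directly: the coefficient $c^{(d)}_{\al\be\ga}$ is non-zero only when $\al,\be,\ga$ are all single rows, so $c^{(d)}_{\al\be\ga}c^{(d)}_{\al\be'\ga}\neq 0$ forces $\be$ to be both a single row and a single column, i.e.\ $|\be|\leq 1$, and the value is then~$1$; the case $\nu=(1^d)$ follows by the involution $c^\nu_{\al\be\ga}=c^{\nu'}_{\al'\be'\ga'}$.

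For $\nu\notin\{(d),(1^d)\}$ I would exhibit a single composition at which the endomorphism ring has dimension at least~$2$, improving the bound to $c_{\nu,\nu}\geq 2d+2$. If $\nu$ is not rectangular then it has at least two removable corners, so the branching rule gives $\spe\nu\da_{\sym_{d-1,1}}=\bigoplus_{\nu^-}\spe{\nu^-}\otimes\spe{(1)}$ with at least two non-isomorphic summands, and the composition $(d-1,0,1)$ contributes at least~$2$. If $\nu=(k^m)$ with $k,m\geq 2$, then $\nu$ has a unique horizontal $2$-strip and a unique vertical $2$-strip removable from it, so $\spe\nu\da_{\sym_{d-2,2}}$ contains the two non-isomorphic summands $\spe{(k^{m-1},k-2)}\otimes\spe{(2)}$ and $\spe{(k^{m-2},(k-1)^2)}\otimes\spe{(1,1)}$, and the composition $(d-2,0,2)$ contributes at least~$2$.

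The main obstacle is the rectangular equality case: when $\nu=(k^m)$ with $k,m\geq 2$ the restriction $\spe\nu\da_{\sym_{d-1}}$ is already simple, so the extra contribution cannot be detected at the first level of branching, and one must restrict one further box and exploit the fact that a non-trivial rectangle admits both a horizontal and a vertical removable $2$-strip.
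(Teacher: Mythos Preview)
Your argument is correct. Both your proof and the paper's rest on \cref{L210322_7} and on the observation that for $|\be|\le1$ one has $\be=\be'$, so that the $2d+1$ compositions with $b\le1$ each contribute at least $1$; and the computation for $\nu=(d)$ is identical.

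The two proofs diverge on the strict inequality. The paper exploits a third self-conjugate partition $\be=(2,1)$: since any $\nu\neq(d),(1^d)$ with $d\ge3$ contains $(2,1)$, the terms with $\be=(2,1)$ contribute at least a further $d-2$, giving the uniform bound $c_{\nu,\nu}\ge(d+1)+d+(d-2)=3d-1>2d+1$ with no case analysis. Your route stays within the $b=0$ range and instead shows that one particular composition already contributes $\ge2$, at the cost of the rectangular/non-rectangular split. Your approach is more elementary (it only needs the ordinary branching rule), while the paper's is slicker and gives a stronger bound; both are perfectly adequate for the lemma.
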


\begin{proof}
Because $\F\sym_d$ is semisimple, $\spe\nu$ is it own projective cover, and therefore $T(A,\spe\nu)$ is the projective cover of $T(\F,\spe\nu)$. So by \cref{L210322_7}
\[
c_{\nu,\nu}=[T(A,\spe\nu):T(\F,\spe\nu)]=\sum_{\substack{\al,\be,\ga\in\Par\\|\al|+|\be|+|\ga|=d}}c^\nu_{\al\be\ga}c^\nu_{\al\be'\ga}.
\]
We first recall the following simple fact about the Littlewood--Richardson coefficients: if $\be\subseteq\nu$ and $0\ls i\ls d-|\be|$, there exist partitions $\al\in\Par(i)$ and $\ga\in\Par(d-|\be|-i)$ for which $c^\nu_{\al\be\ga}>0$.

Assume first that $\ga\not=(d),(1^d)$. Then $d\geq 3$ and $\varnothing,(1),(2,1)\subseteq \ga$. So (using the fact above)
\begin{align*}
c_{\ga,\ga}
&\geq\sum_{\substack{\al,\ga\in\Par\\|\al|+|\ga|=d}}(c^\nu_{\al\varnothing\ga})^2+\sum_{\substack{\al,\ga\in\Par\\|\al|+|\ga|=d-1}}(c^\nu_{\al(1)\ga})^2+\sum_{\substack{\al,\ga\in\Par\\|\al|+|\ga|=d-3}}(c^\nu_{\al(2,1)\ga})^2
\\
&\gs(d+1)+d+(d-2)
\\
&>2d+1.
\end{align*}
Now assume that $\ga=(d)$ (the case $\ga=(1^d)$ being similar). In this case we use the fact that
\[
c^{(d)}_{\al\be\ga}=
\begin{cases}
1&\text{if }\al=(|\al|),\ \be=(|\be|),\ \ga=(|\ga|)
\\
0&\text{otherwise}
\end{cases}
\]
and
\[
c^{(d)}_{\al\be'\ga}=
\begin{cases}
1&\text{if }\al=(|\al|),\ \be=(1^{|\be|}),\ \ga=(|\ga|)
\\
0&\text{otherwise},
\end{cases}
\]
which gives
\[
c_{(d),(d)}=\sum_{i=0}^d(c^{(d)}_{(i)\varnothing(d-i)})^2+\sum_{i=0}^{d-1}(c^{(d)}_{(i)(1)(d-i-1)})^2=(d+1)+d=2d+1.\qedhere
\]
\end{proof}

Now we prove a similar result in characteristic $3$. Given $\pi\in\Par(d)$, let $\ospe\pi$ denote a $3$-modular reduction of $\spe\pi$. Recall that if $\mu$ is a $3$-regular partition then $\jms\mu$ denotes the James module for $\sym_d$ in characteristic $3$. Let $\ol C$ denote the Cartan matrix for $\awrd$ in characteristic $3$, with entries $\ol c_{\mu,\pi}=[T(\F,P(\jms\pi))):T(\F,\jms\mu)]$.

\begin{lemma}\label{L210322_2}
Suppose $\nchar\F=3$, $d\geq 3$ and $\mu$ is a $3$-regular partition of $d$. Then $\ol c_{\ga,\ga}>2d+1$.
\end{lemma}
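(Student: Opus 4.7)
The plan is to reduce to the characteristic-$0$ computation in \cref{L210322_5} by expressing $\ol c_{\mu,\mu}$ as a quadratic form in the decomposition matrix $D=(d_{\nu,\mu})$ of $\F\sym_d$ in characteristic $3$ (with rows indexed by partitions $\nu$ of $d$ and columns by $3$-regular partitions $\mu$) applied to the characteristic-$0$ Cartan matrix $C^{(0)}=(c^{(0)}_{\nu,\pi})$ of $\awrd$. Concretely, the target identity is
\[
\ol c_{\mu,\mu} \;=\; \sum_{\nu,\pi \in \Par(d)} d_{\nu,\mu}\, c^{(0)}_{\nu,\pi}\, d_{\pi,\mu}.
\]

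To derive this, first use \cref{L210322_6} to write the projective cover of $T(\F,\jms\mu)$ as $T(A, P(\jms\mu))$. Brauer reciprocity for $\F\sym_d$ yields $P(\jms\mu) \sim \sum_\nu d_{\nu,\mu}\ospe\nu$ in the Grothendieck group, and since $T(A,-)$ is exact on $\F\sym_d$-modules (it is just tensoring with $A^{\otimes d}$ as a vector space) this upgrades to $T(A,P(\jms\mu)) \sim \sum_\nu d_{\nu,\mu}\, T(A,\ospe\nu)$. The filtration of $T(A,\spe{(d)})$ arising from $0\subset\langle u^2\rangle\subset\langle u,u^2\rangle\subset A$ in the proof of \cref{L210322_7} is characteristic-free, and twisting by $\ospe\nu$ shows that $T(A,\ospe\nu)$ has a filtration whose sections are $T(\F, V_{a,b,c}\otimes\ospe\nu)$, with $V_{a,b,c}=(\ospe{(a)}\otimes\ospe{(1^b)}\otimes\ospe{(c)})\ua_{\sym_{a,b,c}}^{\sym_d}$ and $(a,b,c)$ ranging over compositions of $d$. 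Since $V_{a,b,c}\otimes\spe\nu$ is semisimple in characteristic $0$ and decomposes into Specht modules with the Littlewood--Richardson multiplicities $m_{\nu,\pi,a,b,c}=\sum_{\al,\be,\ga}c^\nu_{\al\be\ga}c^\pi_{\al\be'\ga}$ computed in the proof of \cref{L210322_7}, reducing modulo $3$ gives $[V_{a,b,c}\otimes\ospe\nu:\jms\mu] = \sum_\pi m_{\nu,\pi,a,b,c}\, d_{\pi,\mu}$. Summing over $(a,b,c)$ and using $c^{(0)}_{\nu,\pi} = \sum_{a+b+c=d} m_{\nu,\pi,a,b,c}$ produces the displayed identity.

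Given the identity, the $(\nu,\pi)=(\mu,\mu)$ summand equals $c^{(0)}_{\mu,\mu}$ because $d_{\mu,\mu}=1$ by triangularity of $D$. If $\mu\neq(d)$, then $\mu$ being $3$-regular with $d\geq3$ also forces $\mu\neq(1^d)$, so \cref{L210322_5} gives $c^{(0)}_{\mu,\mu}>2d+1$; all other summands are non-negative, so $\ol c_{\mu,\mu}>2d+1$ immediately.

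The only case requiring more work is $\mu=(d)$, where $c^{(0)}_{(d),(d)}$ is exactly $2d+1$ and a strictly positive extra contribution is needed. Here I would produce a partition $\nu\neq(d)$ of $d$ with $d_{\nu,(d)}>0$; the $(\nu,\nu)$-summand then contributes $d_{\nu,(d)}^2\, c^{(0)}_{\nu,\nu}>0$. The existence of such $\nu$ for $d\geq3$ follows from standard block theory: the $3$-weight of $(d)$ is $\lfloor d/3\rfloor\geq1$, so the principal block of $\F\sym_d$ has positive defect, forcing $P(\jms{(d)})$ to be non-simple, so some $\ospe\nu$ with $\nu\neq(d)$ must appear in its Specht filtration. (One need not worry that the only such $\nu$ is $(1^d)$, as $\ospe{(1^d)}$ is the sign module, distinct from $\jms{(d)}$ for $d\geq2$, so in fact $d_{(1^d),(d)}=0$.) This existence verification is the only real obstacle, and it is short rather than deep.
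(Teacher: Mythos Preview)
Your argument is correct and matches the paper's proof essentially line for line: both derive the identity $\ol c_{\mu,\mu}=\sum_{\nu,\pi}d_{\nu\mu}\,c^{(0)}_{\nu,\pi}\,d_{\pi\mu}$ via Brauer reciprocity and the characteristic-free filtration from \cref{L210322_7}, then bound below by diagonal terms using \cref{L210322_5}, treating $\mu=(d)$ separately via the existence of some $\nu\neq(d)$ with $d_{\nu,(d)}>0$. Your justification of that last existence step (positive defect forces $P(\jms{(d)})$ to be non-simple) is slightly more explicit than the paper's, but the content is the same.
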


\begin{proof}
In this proof $A$ denotes the algebra $\bbf(u)/(u^3)$Given $\pi\in\Par(d)$, let $d_{\pi\mu}=[\ospe\pi:\jms\mu]$ be the corresponding decomposition number for $\F\sym_d$. Then it is clear that $d_{\pi\mu}=[T(\F,\ospe\pi):T(\F,\jms\mu)]$. So, taking the result of \cref{L210322_7} and reducing modulo $3$,
\[
[T(A,\ospe\nu):T(\F,\jms\mu)]=\sum_{\pi\in\Par(d)}[T(A_{\bbc},\spe\nu):T(\mathbb C,\spe\pi)]d_{\pi\mu}.
\]
So by Brauer reciprocity
\[
\ol c_{\mu,\mu}=[T(A,P(\jms\mu)):T(\F,\jms\mu)]=\sum_{\nu\in\Par(d)}d_{\nu\mu}\sum_{\pi\in\Par(d)}[T(A_{\mathbb C},\spe\nu):T(\mathbb C,\spe\pi)]d_{\pi\mu}.
\]
Taking just the terms with $\nu=\pi$ gives
\[
\ol c_{\mu,\mu}\gs\sum_{\nu\in\Par(d)}[T(A_{\mathbb C},\spe\nu):T(\mathbb C,\spe\nu)]d_{\nu\mu}^2=\sum_{\nu\in\Par(d)}c_{\mu,\mu}d_{\nu\mu}^2.
\]
If $\mu\neq(d)$ then $d_{\mu\mu}=1$ and $c_{\mu\mu}>2d+1$ by \cref{L210322_5}, so we are done (note that $\mu$ cannot equal $(1^d)$ because $d\gs3$ and $\mu$ is $3$-regular). In the case $\mu=(d)$ the assumption that $d\gs3$ means that there is a partition $\nu\neq\mu$ for which $d_{\nu\mu}>0$, and so
\[
\ol c_{\mu,\mu}\gs c_{\mu\mu}+c_{\nu\nu}>2d+1,
\]
again using \cref{L210322_5}.
\end{proof}

\subsection{\Spc partitions and RoCK blocks}

Now we consider \spc partitions more closely. Even though our main theorem excludes the case of \spc partitions, we still need to know something about modules labelled by \spc partitions in order to prove our main result. To do this, we combine our analysis of wreath products in \cref{wreathsec} with the results of Kleshchev and Livesey.

Recall that a strict partition is \spc if it has the form $\nu+3\al$ with $\nu$ a 3-bar core and $\al$ a partition with $l(\al)\leq l(\nu)$. In the case where $\la=\nu+3\al$ where the non-zero parts of $\nu$ are congruent to $1$ modulo $3$ and $l(\nu)\geq|\al|$, the block of $\calt_{|\la|}$ containing $\s^\la$ is a \emph{spin RoCK block} as defined in \cite{KL}. We first prove that when considering partitions of the above given form we may always assume that we are in a RoCK block.

\begin{lemma}\label{L210322_3}
Let $\al$ be a partition. If there exists a 3-bar core $\nu$ with $l(\nu)\geq l(\al)$ such that $\nu+3\al$ is homogeneous, then $\pi+3\al$ is homogeneous for all 3-bar cores $\pi$ with $l(\pi)\geq l(\al)$.
\end{lemma}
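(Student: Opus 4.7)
The plan is to use the Morita equivalence for spin RoCK blocks from Kleshchev--Livesey together with \cref{T300921_2,C081021} to show that homogeneity of $\pi + 3\al$ is a property of $\al$ alone and does not depend on the choice of 3-bar core $\pi$ with $l(\pi) \ge l(\al)$.

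The first step is to reduce to the RoCK regime $l(\pi) \ge |\al|$. Given a 3-bar core $\pi$ of length at least $l(\al)$, I would build a longer 3-bar core $\tilde\pi$ of length at least $|\al|$ by successively inserting additional parts. The insertion of each new part in $\tilde\pi$ is realised by applying a chain of operations $\la \mapsto \la^{+i}$ to $\pi + 3\al$, with the residues chosen to build the corresponding new row of nodes. By \cref{T300921_2}, if $\pi + 3\al$ is homogeneous then so is the resulting $\tilde\pi + 3\al$; and by arranging the chain so that at each step the reverse move has $\hphi_j = 0$, the converse also holds by \cref{C081021}.

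The second step invokes the main result of Kleshchev--Livesey: the spin RoCK block with 3-bar core $\pi$ (where $l(\pi) \ge |\al|$) and 3-bar-weight $|\al|$ is Morita equivalent to a block of the wreath product $A \wrs \sym_{|\al|}$. Under this equivalence $\s^{\pi + 3\al}$ maps to a module determined by $\al$ alone (the equivalence applies uniformly across the two types of 3-bar core). In particular, homogeneity of $\s^{\pi + 3\al}$ in the RoCK regime is independent of $\pi$. Combining with Step 1, if $\s^{\nu + 3\al}$ is homogeneous then so is $\s^{\tilde\nu + 3\al}$, hence $\s^{\tilde\pi + 3\al}$ for any RoCK core $\tilde\pi$, and finally $\s^{\pi + 3\al}$ for any 3-bar core $\pi$ with $l(\pi) \ge l(\al)$.

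The main obstacle is the combinatorial content of Step 1: the chain of induction operations must be invertible via \cref{C081021}, which requires verifying the condition $\hphi_i = 0$ at each intermediate partition. This depends on a careful analysis of the strictly-addable nodes of partitions of the form (3-bar core)\,$+\, 3\al$, together with an abacus-style description of how parts can be added to a 3-bar core.
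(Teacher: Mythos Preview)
Your Step~2 is both unnecessary and not available in the form you state. The result of Kleshchev--Livesey used in the paper (\cref{klthmf}) gives only an idempotent truncation $f\,\calbc^{\nu,d}f\cong(\awrd)\otimes\calc_{r_0+2d}$, not a Morita equivalence of the whole block with $\awrd$; and even granting a stronger equivalence, the claim that $\s^{\pi+3\al}$ corresponds to a module depending only on $\al$ (uniformly for both types of $3$-bar core) would itself need a proof.

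More importantly, your Step~1 by itself already contains the whole argument, once set up correctly. The $3$-bar cores are linearly ordered by size:
\[
(1),\ (2),\ (4,1),\ (5,2),\ (7,4,1),\ (8,5,2),\ \dots,
\]
alternating between cores with all parts $\equiv1\Md3$ and cores with all parts $\equiv2\Md3$. If $\nu=(3l-1\df2)$ and $\pi=(3l-2\df1)$ are adjacent cores of the same length $l\ge l(\al)$, then every part of $\nu+3\al$ is $\equiv2\Md3$, so $(\nu+3\al)^{-1}=\pi+3\al$ and $(\pi+3\al)^{+1}=\nu+3\al$. Hence by \cref{T300921,T300921_2} the partitions $\nu+3\al$ and $\pi+3\al$ are homogeneous simultaneously. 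Similarly, with $\psi=(3l+1\df1)$ one has $(\nu+3\al)^{+0}=\psi+3\al$ and $(\psi+3\al)^{-0}=\nu+3\al$, giving the equivalence between $\nu+3\al$ and $\psi+3\al$. Chaining these two moves connects any two $3$-bar cores of length at least $l(\al)$.

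So there is no need to reach the RoCK regime, no need for \cref{C081021}, and no need to invoke \cite{KL}: the operations $^{-i}$ and $^{+i}$ are exact inverses of each other on these particular partitions, and \cref{T300921,T300921_2} immediately give the two-way implication.
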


\begin{proof}
It is enough to prove that if $l\geq l(\la)$ and
\[
\pi=(3l-2\df1),\qquad\nu=(3l-1\df2),\qquad\psi=(3l+1\df1),
\]
then $\nu+3\al$ is homogeneous if and only if $\pi+3\al$ is, and similarly with $\psi$ in place of $\pi$.

It follows by the definition that all parts of $\nu+3\al$ are congruent to $2$ modulo $3$, so that $(\nu+3\al)^{-1}=\pi+3\al$ and $(\pi+3\al)^{+1}=\nu+3\al$. Similarly we find that $(\nu+3\al)^{+0}=\psi+3\al$ and $(\psi+3\al)^{-0}=\nu+3\al$. The lemma then follows from \cref{T300921,T300921_2}.
\end{proof}

Next we prove a column removal result for $\al$.

\begin{lemma}\label{L210322_4}
Suppose $\al$ is a partition. Let $l=l(\al)$ and
\[
\nu=(3l-2\df1),\qquad\pi=(3l-1\df2),\qquad\be=(\al_1-1,\dots,\al_l-1).
\]
If $\nu+3\al$ is homogeneous then so is $\pi+3\be$.
\end{lemma}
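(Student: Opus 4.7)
The plan is to identify $\pi+3\be$ with $(\nu+3\al)^{-0}$, after which \cref{T300921} immediately yields the homogeneity of $\pi+3\be$ from that of $\nu+3\al$. Writing $k_r=l-r+\al_r$ for $1\leq r\leq l$, the $r$-th part of $\mu:=\nu+3\al$ equals $3k_r+1$ and the $r$-th part of $\pi+3\be$ equals $3k_r-1$. Since $l=l(\al)$ and $\al$ is a partition, we have $k_1>k_2>\dots>k_l\geq1$, so both are strict partitions of length $l$ and both match up part-by-part with the claimed formulas.

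Next I would work out the residues of the trailing nodes in each row of $\mu$. For $p=3$ the residue of a node in column $c$ is $1$ if $c\equiv2\ppmod3$ and $0$ otherwise. Hence in row $r$ of $\mu$ the last node, in column $3k_r+1\equiv1$, is a $0$-node; the second-to-last, in column $3k_r\equiv0$, is also a $0$-node; and the third-to-last, in column $3k_r-1\equiv2$, is a $1$-node. Thus from each row only the two trailing nodes are candidates for removal as $0$-nodes.

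I would then verify that removing these $2l$ nodes simultaneously leaves a strict partition: the new $r$-th part $3k_r-1$ is strictly greater than $3k_{r+1}-1$ because $k_r>k_{r+1}$, and $3k_l-1\geq2$. So all $2l$ of these nodes are strictly-removable, and (by the residue pattern above) no $0$-node further inside any row can be strictly-removable. Therefore $\mu^{-0}$ has $r$-th part $3k_r-1$, which is exactly $\pi+3\be$. Applying \cref{T300921} finishes the argument. The only subtle step is the residue bookkeeping; once $\mu^{-0}=\pi+3\be$ is established, everything else is immediate.
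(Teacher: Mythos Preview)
Your proof is correct and follows exactly the same approach as the paper: you show $(\nu+3\al)^{-0}=\pi+3\be$ (the paper does this in a single sentence by noting that all parts of $\nu+3\al$ are congruent to $1$ modulo $3$) and then invoke \cref{T300921}. Your version simply spells out the residue and strictness verifications in detail.
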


\begin{proof}
Note that all parts of $\nu+3\al$ are congruent to $1$ modulo $3$, so $\pi+3\be=(\nu+3\al)^{-0}$. The lemma then follows from \cref{T300921}.
\end{proof}

The converse does not necessarily hold, as the following \lcnamecref{T210322} shows.

\begin{theor}\label{T210322}
Let $\nu=(3l-2\df1)$, where $l\geq d\geq 3$. Then $\nu+(3^d)$ is not homogeneous.
\end{theor}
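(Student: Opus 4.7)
The plan is to combine \cref{L210322_3} with the Kleshchev--Livesey Morita superequivalence for spin RoCK blocks, and then to exhibit a composition factor of $\s^\la$ other than $\D^{\la^\reg}$ directly inside the wreath super-product $\awrd$ of \cref{wreathsec}.

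By \cref{L210322_3} I may take $\nu=(3l-2\df1)$, so that every part of $\la=\nu+(3^d)$ is congruent to $1$ modulo $3$ and the block of $\calt_n$ containing $\s^\la$ is a spin RoCK block of weight $d$ in the sense of~\cite{KL}. Invoking the Morita equivalence of~\cite{KL}, the reduction of $\s^\la$ modulo~$3$ has the same composition factors as the module
\[
V:=T(A,\ospe{(1^d)})=T(A,\text{sign})
\]
in $\awrd$, and $\D^{\la^\reg}$ corresponds to the simple $T(\F,\jms\sigma)$, where $\sigma$ is the $3$-regular partition of $d$ labelling the sign representation of $\sym_d$ in characteristic $3$ (so $\sigma=(1^d)^\reg$). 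It thus suffices to produce a composition factor of $V$ distinct from $T(\F,\jms\sigma)$.

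To produce such a composition factor, I would filter $V=A^{\otimes d}\otimes\text{sign}$ by total $u$-degree. This filtration is $\awrd$-stable (since $u_i$ raises degree by one and the $\sym_d$-action preserves it), and its first non-trivial quotient, the degree-$1$ part of $A^{\otimes d}$ tensored with sign, is isomorphic as an $\F\sym_d$-module to
\[
\spe{(1^d)}\oplus\spe{(2,1^{d-2})}
\]
(the permutation representation on $d$ letters, tensored with sign; the absence of super-signs here uses that only one tensor slot is odd). Since each $u_i$ acts as zero on this quotient, its composition factors as an $\awrd$-module are $T(\F,\jms\kappa)$ for $\kappa$ a composition factor of $\ospe{(1^d)}=\jms\sigma$ or of $\ospe{(2,1^{d-2})}$. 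For $d\geq 4$ a direct ladder calculation shows $(2,1^{d-2})^\reg\neq(1^d)^\reg=\sigma$, so this quotient, and hence $V$, contains $T(\F,\jms{(2,1^{d-2})^\reg})\neq T(\F,\jms\sigma)$; and for $d=3$ one has $\ospe{(2,1)}\sim\jms{(2,1)}+\jms{(3)}$, giving the extra composition factor $T(\F,\jms{(3)})\neq T(\F,\jms{(2,1)})=T(\F,\jms\sigma)$. Either way $V$, and hence $\s^\la$, has a composition factor other than $\D^{\la^\reg}$, so $\la$ is not homogeneous.

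The hard part is making precise the Morita correspondence on Specht-like modules used in identifying the reduction of $\s^\la$ with $V=T(A,\ospe{(1^d)})$: this requires extracting from~\cite{KL} that under their superequivalence $\s^\la$ for $\la=\nu+3\al$ corresponds (up to appropriate parity twists) to $T(A,\ospe\al)$ in $\awrd$, and tracking the super-sign behaviour of the degree-$1$ quotient of $V$ carefully under the super-structure of the wreath super-product.
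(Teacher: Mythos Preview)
Your argument has a genuine gap at its first substantive step. What \cite{KL} provides (cited here as \cref{klthmf}) is not a Morita equivalence between the block and $\awrd$, but only an idempotent truncation: there is an idempotent $f\in\calbc^{\nu,d}$ with $f\calbc^{\nu,d}f\cong(\awrd)\otimes\calc_{r_0+2d}$. This idempotent is \emph{not} full. Indeed, by \cref{3strictnu} the restricted $3$-strict partitions in $\calb^{\nu,d}$ are the $\nu\sqcup3\be$ for $\be\in\Par(d)$, so the block has $|\Par(d)|$ simple supermodules, whereas $\awrd$ in characteristic~$3$ has only as many simples as there are $3$-regular partitions of $d$; for $d\geq3$ these counts differ. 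So there is no Morita equivalence from the block to $\awrd$, and your identifications of $\s^\la$ with $T(A,\ospe{(1^d)})$ and of $\D^{\la^\reg}$ with $T(\F,\jms{(1^d)^\reg})$ cannot be extracted from \cite{KL} as stated.

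In fact the paper's proof turns precisely this obstruction into the argument. It shows, by comparing diagonal Cartan numbers (\cref{L210322} versus \cref{L210322_2}), that $f$ \emph{kills} the simple corresponding to $\D^{\la^\reg}=\D^{\nu\sqcup(3d)}$ (\cref{fd0}); and it shows via a weight-space argument (\cref{fsnon0}, using \cref{tabweight}) that $f$ does not kill the module corresponding to $\s^\la$. Together these force $\s^\la$ to have a composition factor not isomorphic to $\D^{\la^\reg}$. So rather than transporting $\s^\la$ across an equivalence and computing composition factors inside $\awrd$, the paper only needs the weak input $f\s\neq0$; all the work done inside $\awrd$ (the Cartan bound of \cref{L210322_2}) is there solely to establish $f\D=0$. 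Your closing paragraph flags the identification $\s^\la\leftrightarrow T(A,\ospe{(1^d)})$ as ``the hard part'', but the issue is more than a technicality: the target algebra $\awrd$ is too small to see $\D^{\la^\reg}$ at all.
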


To prove \cref{T210322}, we will use results on RoCK blocks proved recently by Kleshchev and Livesey, specialising to the case $p=3$. So we fix $\nu=(3l-2\df1)$ and $d\ls l$, and let $\calb^{\nu,d}$ be the block of $\calt_{|\nu|+3d}$ with bar core $\nu$ and bar-weight $d$. (We will abuse notation by saying that a $3$-strict partition $\la$ lies in $\calb^{\nu,d}$ if it has $3$-bar core $\nu$ and $3$-bar-weight $d$.)

\begin{lemma}\label{3strictnu}
The $3$-strict partitions in $\calb^{\nu,d}$ are the partitions $(\nu+3\al)\sqcup3\be$, for partitions $\al,\be$ with $|\al|+|\be|=d$. Such a partition is restricted \iff $\al=\varnothing$.
\end{lemma}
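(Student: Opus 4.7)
The plan is to prove the characterisation in three steps: (a) every partition of the claimed form lies in $\calb^{\nu,d}$ and is $3$-strict; (b) every $3$-strict partition in $\calb^{\nu,d}$ has this form; (c) such a partition is restricted iff $\al=\varnothing$. Step (a) is routine: the parts of $\nu+3\al$ are distinct and $\equiv 1\pmod 3$ (using $l(\al)\ls l$ and strictness of $\nu$), while the parts of $3\be$ are divisible by $3$, so $(\nu+3\al)\sqcup 3\be$ is $3$-strict; that the $3$-bar core is $\nu$ and the weight is $d$ can be verified by an explicit sequence of Type~(a) bar removals (stripping $3\be$ first, then reducing each $\nu_r+3\al_r$ to $\nu_r$) or by a direct content computation via the Morris--Yaseen criterion.

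For step (b), let $\la\in\calb^{\nu,d}$ and write $n_i$ for the number of parts of $\la$ congruent to $i\pmod 3$. Both types of bar-removal preserve $n_1-n_2$ (Type~(a) preserves the residue of each part, and Type~(b) removes one residue-$1$ part and one residue-$2$ part), so $n_1-n_2=l$, the value for $\nu$. Combining this with $|\la|=l(3l-1)/2+3d$ and the $3$-strictness bound
\[
|\la|\gs\sum_{i=1}^{n_1}(3i-2)+\sum_{i=1}^{n_2}(3i-1)=\tfrac{n_1(3n_1-1)+n_2(3n_2+1)}{2}
\]
(since the parts of $\la$ with residues $1$ and $2$ are distinct positive integers), substituting $n_1=l+n_2$ and simplifying yields $d\gs n_2(l+n_2)$. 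If $n_2\gs 1$ this forces $d\gs l+1$, contradicting $l\gs d$; so $n_2=0$, $n_1=l$, and $\la$ takes the claimed form with $|\al|+|\be|=d$ by comparing sizes, where $\al$ is a partition with $l(\al)\ls l$ because $\nu+3\al$ must have exactly $l$ (distinct) parts.

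For step (c), if $\al=\varnothing$ then each part of $3\be$ is at most $3|\be|\ls 3l$, hence equals $3(l-s)$ for some $0\ls s\ls l-1$; such a part fits between $\nu_s=3(l-s)+1$ and $\nu_{s+1}=3(l-s)-2$ giving gaps $1$ and $2$, repeated copies contribute gaps $0$, and any remaining gaps of $3$ between consecutive $\nu$-parts occur at entries $\equiv 1\pmod 3$, so the partition is restricted. Conversely, if $\al\ne\varnothing$ let $r^*=\max\{r:\al_r>0\}$ and consider the transition from $\mu_{r^*}=\nu_{r^*}+3\al_{r^*}$ to the next smaller part (either $\nu_{r^*+1}$, or $0$ if $r^*=l$). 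The total gap to bridge is $3\al_{r^*}+3\gs 6$, and the multiples of $3$ available in this range are $\{3(l-r^*+j):0\ls j\ls\al_{r^*}\}$. Two distinct consecutive such values are exactly $3$ apart with the upper one $\equiv 0\pmod 3$, violating restrictedness; so at most one distinct value can plug the gap, but the two resulting sub-gaps then sum to $3\al_{r^*}+3\gs 6$, exceeding the maximum $3+2=5$ permitted (the upper sub-gap is at most $3$ since $\mu_{r^*}\equiv 1$, while the lower sub-gap, preceding a multiple of $3$, is at most $2$).

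The main obstacle will be completing the case analysis in step (c): one must rule out all possible insertions of parts of $3\be$ around the index $r^*$, exploiting the dichotomy between allowed repeated multiples of $3$ (contributing harmless gaps of $0$) and forbidden distinct consecutive multiples of $3$ (gap $3$ with upper $\equiv 0\pmod 3$). Once this dichotomy is pinned down, the total-gap count closes out the argument; the first two steps are by comparison short, driven respectively by a direct construction and by the invariance of $n_1-n_2$ combined with the size bound.
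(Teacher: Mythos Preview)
Your proof is essentially correct and takes a genuinely different route from the paper in step~(b). The paper argues by a short induction on $d$: a partition in $\calb^{\nu,d}$ arises by adding a $3$-bar to one in $\calb^{\nu,d-1}$, which by induction has the form $(\nu+3\al^-)\sqcup3\be^-$; since $d-1<l$ forces $\al^-_l=0$ and hence the smallest part equals $1$, the only way to add a bar is to increase some part by $3$, which amounts to adding a box to $\al^-$ or to $\be^-$. Your invariant argument (that $n_1-n_2$ is preserved under bar removal, combined with the size lower bound) is more direct and arguably more illuminating---it explains \emph{why} no residue-$2$ parts can appear without tracking a chain of bar additions---at the cost of a short computation. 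Both approaches are clean; the paper's is terser.

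There is one small slip in step~(c). When $r^*=l$ the next part below $\mu_{r^*}$ is $0$, and the total gap is $\mu_l-0=1+3\al_l$, not $3\al_{r^*}+3$. If $\al_l=1$ this gap is only $4$, so your bound ``upper sub-gap $\le 3$, lower sub-gap $\le 2$, sum $\le 5$'' does not immediately contradict it. The fix is easy: since $\mu_{r^*}\equiv1$ and any inserted $x\equiv0$, the upper sub-gap $\mu_{r^*}-x$ is congruent to $1\pmod3$ and at most $3$, hence exactly $1$; together with the lower sub-gap $\le 2$ this gives sum $\le 3<1+3\al_l$, which suffices in both cases $r^*<l$ and $r^*=l$. (Alternatively, when $r^*=l$ and a single value $x$ is inserted, the final gap $x-0=x\ge 3$ with $x\equiv0$ already violates restrictedness.) The paper simply declares this direction ``easy to check'' without details, so your write-up, once patched, is more complete.
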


\begin{proof}
The first statement is easily proved by induction on $d$: a partition in $\calb^{\nu,d}$ is obtained by adding a $3$-bar to a partition in $\calb^{\nu,d-1}$. By induction such a partition can be written in the form $(\nu+3\al^-)\sqcup3\be^-$ where $|\al^-|+|\be^-|=d-1$. Since $d-1<l$, the last non-zero part of this partition equals $1$. So the only way to add a $3$-bar is to increase one of the parts by $3$. Increasing one of the parts congruent to $1$ modulo $3$ corresponds to adding a node to $\al^-$, while increasing one of the parts divisible by $3$ corresponds to adding a node to $\be$. So we obtain a partition of the required form.

The second statement is easy to check.
\end{proof}

Now we compute the diagonal Cartan entry for the simple module $\D^{\nu\sqcup(3d)}$.

\begin{lemma}\label{L210322}
Let $\nu$ and $d$ be as above, and let $\mu=\nu\sqcup(3d)$. Then $[\P^\mu:\D^\mu]=2d+1$.
\end{lemma}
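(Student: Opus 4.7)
The plan is to invoke the Kleshchev--Livesey description of spin RoCK blocks to reduce the computation of $[\P^\mu:\D^\mu]$ to a Cartan calculation in the wreath superalgebra $A\wrs\sym_d$ from the previous subsection. Since $\nu=(3l-2\df1)$ has all non-zero parts congruent to $1$ modulo $3$ and $l(\nu)\gs d$, the block $\calb^{\nu,d}$ is a spin RoCK block in the sense of \cite{KL}, and I would appeal to their main structural result to obtain a correspondence between projective supermodules of $\calb^{\nu,d}$ and projective supermodules of $A\wrs\sym_d$.

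First I would pin down the label of the simple supermodule $\D^\mu$ under this correspondence. Combinatorially, \cref{3strictnu} tells us that the restricted $3$-strict partitions in $\calb^{\nu,d}$ are $\nu\sqcup 3\beta$ for $\beta\in\Par(d)$, so are naturally indexed by $\Par(d)$, and under Kleshchev--Livesey this indexing should match the labelling of simple $A\wrs\sym_d$-supermodules by partitions of $d$. Our $\mu=\nu\sqcup(3d)$ corresponds to $\beta=(d)$, which on the wreath-product side matches the simple $T(\F,\spe{(d)})$, and correspondingly $\P^\mu$ matches its projective cover $T(A,\spe{(d)})$ (cf.~\cref{L210322_6}).

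The final step is to read off the Cartan entry $[\P^\mu:\D^\mu]$ as the composition multiplicity of $T(\F,\spe{(d)})$ in $T(A,\spe{(d)})$. This is just $c_{(d),(d)}=2d+1$ by \cref{L210322_5}. The subtle point, which will be the main obstacle, is that \cref{L210322_2} gives $\ol c_{(d),(d)}>2d+1$ in characteristic $3$, so the correspondence cannot simply equate characteristic-$3$ Cartan matrices on the nose. The resolution I expect is that the Kleshchev--Livesey equivalence transports the characteristic-$0$ lift $\P^\mu_{\bbc}$ of $\P^\mu$ to $T(A_{\mathbb C},\spe{(d)})$, and then the desired count $[\P^\mu:\D^\mu]$ reads off directly from the characteristic-$0$ composition multiplicity $[T(A_{\mathbb C},\spe{(d)}):T(\mathbb C,\spe{(d)})]$ computed via \cref{L210322_7}, which in the proof of \cref{L210322_5} yields $(d+1)+d=2d+1$ from the $(\al,\be,\ga)=((i),\varnothing,(d-i))$ and $(\al,\be,\ga)=((i),(1),(d-i-1))$ contributions. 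Matching this multiplicity with $[\P^\mu:\D^\mu]$ through Brauer reciprocity applied to the characteristic-$0$ lift then gives the claim.
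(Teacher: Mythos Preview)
Your approach has a genuine gap, and the obstacle you identify is in fact fatal rather than merely subtle.

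The Kleshchev--Livesey result used in this paper (\cref{klthmf}) does not give a Morita equivalence between $\calb^{\nu,d}$ and $\awrd$. It only produces an idempotent $f\in\calbc^{\nu,d}$ with $f\calbc^{\nu,d}f\cong(\awrd)\otimes\calc_{r_0+2d}$, so the wreath product only sees those simples not annihilated by $f$. The very next result in the paper, \cref{fd0}, shows that the simple module corresponding to $\D^\mu$ \emph{is} killed by $f$; and the proof of \cref{fd0} uses \cref{L210322} as input, comparing the diagonal Cartan entry $2d+1$ from \cref{L210322} against the strict lower bound $2d+1$ from \cref{L210322_2}. So your proposed route is circular: you would be invoking a correspondence whose failure to capture $\D^\mu$ is established precisely by the lemma you are trying to prove. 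Your suggested fix via characteristic-$0$ lifts does not help, because the block $\calb^{\nu,d}$ under discussion lives in characteristic~$3$, and the idempotent truncation in that characteristic gives the characteristic-$3$ wreath product with Cartan entry $\ol c_{(d),(d)}>2d+1$.

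The paper's proof is instead entirely direct and combinatorial, avoiding \cite{KL} altogether. Since $\mu$ is the most dominant restricted $3$-strict partition in $\calb^{\nu,d}$ (by \cref{3strictnu}), \cref{L300921} gives $\P^\mu\sim\sum_{\la^\reg=\mu}[\P^\mu:\s^\la]\s^\la$. One then checks that the strict partitions $\la$ with $\la^\reg=\mu$ are exactly $(\nu+(3^{d-i}))\sqcup(3i)$ for $0\ls i\ls d$, and applies the formula $[\P^\mu:\s^\la][\s^\la:\D^\mu]=2^{l_3(\la)}$ from \cref{L300921}. Since $l_3$ equals $0$ for $i=0$ and $1$ for $i\gs1$, the sum is $1+2d$.
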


\begin{proof}
\cref{3strictnu} shows that $\mu$ is the most dominant restricted $3$-strict partition in $\calb^{\nu,d}$. So \cref{L300921} gives
\[
\P^\mu\sim\sum_{\substack{\la\in\RP\\\la^\reg=\mu}}[\P^{\la^\reg}:\s^\la]\s^\la,
\]
where the coefficient $[\P^{\la^\reg}:\s^\la]$ is given in \cref{L300921}. So to obtain the formula for $[\P^\mu]$ we need to find which strict partitions $\la$ satisfy $\la^\reg=\mu$. We claim that $\la^\reg=\mu$ \iff $\la$ has the form $(\nu+(3^{d-i}))\sqcup(3i)$ for $0\ls i\ls d$. The ``if'' part is easy to check by directly comparing ladders. For the ``only if'' part, assume $\la\in\calb^{\nu,d}$ and use \cref{3strictnu} to write $\la=(\nu+3\al)\sqcup 3\be$ for some partition $\al,\be$ with $|\al|+|\be|=d$. If $l(\be)\geq 2$ then $l(\la^\reg)\geq l(\la)\gs l+2>l(\mu)$, so that $\la^\reg\neq\mu$. If $\al_1\geq 2$ then $\la$ contains the node $(1,3l+4)$ which lies in ladder $2l+2$, so that $l(\la^\reg)\geq l+2>l(\mu)$, so again $\la^\reg\neq\mu$.

Hence
\[
[\P^\mu:\D^\mu]=\sum_{i=0}^d[\P^\mu:\s^{\nu+(3^{d-i})\sqcup(3i)}][\s^{\nu+(3^{d-i})\sqcup(3i)}:\D^\mu].
\]
Combining the two equations in \cref{L300921}, we find that if $\la^\reg=\mu$ then $[\P^\mu:\s^\la][\s^\la:\D^\mu]=2^{l_3(\la)}$. We can easily see that $l_3((\nu+(3^{d-i}))\sqcup(3i))$ equals $0$ if $i=0$, or $1$ if $i\gs1$. The result follows.
\end{proof}

Now we summarise the results we need from \cite{KL}. Keeping $\nu,d$ as above, the assumption that $d\ls l$ means that $\calb^{\nu,d}$ is a \emph{RoCK block}. In view of \cref{L210322} we can replace $l$ with a larger value, and we choose $l$ such that $l\equiv2d\Md4$. Now we claim that $n=|\nu|+3d$ is even and that the block $\calb^{\nu,d}$ is of type M. To see that $n$ is even, observe that $|\nu|=l(3l-1)/2\equiv\lceil l/2\rceil\Md2$, so if $l\equiv2d\Md4$, then $n=|\nu|+3d\equiv d+3d\equiv0\Md2$. To see that the block $\calb^{\nu,d}$ is of type M, observe that the number of $1$-nodes of $\nu$ is $l(l-1)/2$, so that the number of $1$-nodes of a partition with $3$-bar core $\nu$ and $3$-bar-weight $d$ is $d+l(l-1)/2\equiv d+\lfloor l/2\rfloor\Md2$.

Rather than treating $\calt_n$ directly, Kleshchev and Livesey work with the \emph{Sergeev superalgebra} $\caltc_n=\calt_n\otimes\calc_n$, where $\calc_n$ is the Clifford algebra on $n$ generators. The assumption that $n$ is even guarantees that $\caltc_n$ is Morita superequivalent to $\calt_n$. We let $\calbc^{\nu,d}$ be the (super)block corresponding to $\calb^{\nu,d}$. Now we need the following result, which relates RoCK blocks to the wreath products $\awrd$ studied in \cref{wreathsec}.

\begin{theor}\cite[Theorem F]{KL}\label{klthmf}
Suppose $\nu,d$ are as above, and let $r_0$ denote the number of nodes of $\nu$ of residue $0$. Then there is an idempotent $f\in\calbc^{\nu,d}$ such that
\[
f\calbc^{\nu,d}f\cong(\awrd)\otimes\calc_{r_0+2d}.
\]
\end{theor}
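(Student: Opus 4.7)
The plan is to construct the idempotent $f$ explicitly and compute the corner algebra $f\calbc^{\nu,d}f$ by comparing generators and relations with those of $(A\wrs\sym_d)\otimes\calc_{r_0+2d}$. The guiding principle is Chuang--Kessar's description of classical RoCK blocks as Morita equivalent to wreath products of a principal block; here the principal block of weight $1$ for the Sergeev superalgebra plays the role of the building block, and its basic algebra turns out to be $A\otimes\calc_{r_0'+2}$ after stripping off the Clifford factor coming from the Sergeev--Clifford duplication. First I would identify a distinguished projective supermodule $P$ in $\calbc^{\nu,d}$, namely the projective cover of the simple indexed by $\mu=\nu\sqcup(3d)$, which by \cref{3strictnu} is the most dominant simple in the block, and set $e$ to be a primitive idempotent with $\calbc^{\nu,d}e\cong P$. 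The idempotent $f$ would then be taken to be a sum of translates of $e$ chosen to exhaust (up to Morita equivalence) all indecomposable projectives, parameterized by partitions $\be$ of $d$.

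Second, I would identify the three commuting families of generators inside $f\calbc^{\nu,d}f$: (a) generators $y_1,\dots,y_{r_0+2d}$ of odd degree generating the Clifford factor $\calc_{r_0+2d}$, arising from Sergeev's Jucys--Murphy analogues $y_r$ acting on the nodes of residue $0$ in the shapes indexing $P$ (there are $r_0$ such nodes in $\nu$ plus two per added $3$-bar); (b) $d$ commuting odd elements $u_1,\dots,u_d$ playing the role of the generators of the $d$ copies of $A=\bbf[u]/(u^3)$, constructed by means of composed 3-bar induction functors $\Ind_{i_1}\Ind_{i_2}\Ind_{i_3}$ (with $(i_1,i_2,i_3)$ the residue sequence of a single $3$-bar on the first runner), with $u^3=0$ coming from the Loewy structure of the weight-$1$ spin block; and (c) even elements $s_1,\dots,s_{d-1}$ implementing the $\sym_d$-action by permuting the $d$ independent ``Rouquier slots'' where the $3$-bars are added. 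I would then verify the defining relations of $A\wrs\sym_d$, crucially the sign-twisted braiding $s_j u_j = -u_{j+1}s_j$ required by the super-tensor rule, by direct computation in $\caltc_n$ using the parity of the Jucys--Murphy generators.

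The main obstacle is a consistent handling of the super-structure: the $u_i$ must anti-commute across the tensor factors under the super-tensor rule, the $y_r$ must anti-commute amongst themselves, and the two families must be chosen to commute with one another, all while the $s_j$ are required to centralize the Clifford factor while braiding with signs against the $u_i$. This signbookkeeping is precisely what forces the passage from $\calb^{\nu,d}$ to $\calbc^{\nu,d}=\calb^{\nu,d}\otimes\calc_n$ (so that $n$ must be even, which we may arrange by enlarging $l$), and is the technical heart of Kleshchev--Livesey's argument; in our paper we simply cite it. The only thing I would need to check on our side is that the hypotheses of \cite[Theorem F]{KL} genuinely apply, namely that $l\geq d$ guarantees RoCK-ness and that the parity adjustments of $l$ made before invoking the theorem keep us inside a type~M block.
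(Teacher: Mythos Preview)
This theorem is not proved in the paper at all: it is stated with the citation \cite[Theorem F]{KL} and then simply used as a black box. So there is no ``paper's own proof'' to compare your proposal against. You seem to be aware of this, since you write at the end that ``in our paper we simply cite it''; but then the bulk of your proposal is not really a proof so much as a speculative outline of what Kleshchev and Livesey might be doing, and that outline cannot be checked against the present paper.

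The only part of your proposal that is relevant to \emph{this} paper is your final sentence: verifying that the hypotheses of \cite[Theorem F]{KL} are met. The paper does address this, in the paragraph immediately preceding the theorem statement: it checks that the condition $d\le l$ makes $\calb^{\nu,d}$ a RoCK block, and it shows that by enlarging $l$ so that $l\equiv 2d\pmod 4$ one can arrange both that $n$ is even (so that $\caltc_n$ is Morita superequivalent to $\calt_n$) and that the block is of type~M. It also notes, just after the theorem, that the evenness of $l$ ensures $r_0+2d$ is even, so that the Clifford factor $\calc_{r_0+2d}$ gives a genuine Morita equivalence with $\awrd$. Beyond these hypothesis checks, the paper makes no attempt to reprove or even sketch the Kleshchev--Livesey result, and neither should you.
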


Our assumption that $l$ is even guarantees that the integer $r_0+2d$ is even, which means that $(\awrd)\otimes\calc_{r_0+2d}$ is Morita equivalent to $\awrd$. We want to examine which simple $\calbc^{\nu,d}$-modules are killed by the idempotent $f$.

\begin{propn}\label{fd0}
Let $\D$ be the simple supermodule corresponding to $\D^{\nu\sqcup(3d)}$ under the Morita equivalence between $\calb^{\nu,d}$ and $\calbc^{\nu,d}$. Then $f\D=0$.
\end{propn}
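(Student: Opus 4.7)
The plan is to argue by contradiction using diagonal Cartan invariants and the strict inequality $\ol c_{\mu,\mu}>2d+1$ from \cref{L210322_2}. Suppose $f\D\neq 0$. Since $f$ is an idempotent, the functor $M\mapsto fM$ from $\calbc^{\nu,d}\text{-mod}$ to $f\calbc^{\nu,d}f\text{-mod}$ is exact and sends nonzero simples to pairwise non-isomorphic simples, so $f\D$ is simple. Transporting through the isomorphism $f\calbc^{\nu,d}f\cong(\awrd)\otimes\calc_{r_0+2d}$ of \cref{klthmf}, together with the Morita superequivalence between $\calc_{r_0+2d}$ and $\bbf$ (which holds since $r_0+2d$ is even), we see that $f\D$ corresponds to a simple supermodule of $\awrd$. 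By \cref{L210322_6} this must be of the form $T(\bbf,\jms\mu)$ for some $3$-regular partition $\mu$ of $d$.

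Next I would compute a single diagonal Cartan entry in two ways. Let $\ol\P$ denote the projective cover of $\D$ in $\caltc_n\text{-mod}$; since $n$ is even, the Morita superequivalence $\calt_n\sim\caltc_n$ identifies $\ol\P$ with $\P^{\nu\sqcup(3d)}$ and $\D$ with $\D^{\nu\sqcup(3d)}$. Exactness of the $f$-functor, together with the fact that it kills every simple $\D'\neq\D$ with $f\D'=0$ and sends the remaining simples to distinct simples, yields
\[
[f\ol\P:f\D]\;=\;[\ol\P:\D]\;=\;[\P^{\nu\sqcup(3d)}:\D^{\nu\sqcup(3d)}]\;=\;2d+1
\]
by \cref{L210322}. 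On the other hand, $f\ol\P$ is the projective cover of $f\D$ in $f\calbc^{\nu,d}f\text{-mod}$, so transporting through the isomorphism of \cref{klthmf} and the Morita superequivalence with $\awrd$ gives
\[
[f\ol\P:f\D]\;=\;\ol c_{\mu,\mu}.
\]
Since $d\geq 3$, \cref{L210322_2} forces $\ol c_{\mu,\mu}>2d+1$, contradicting the previous equality. Hence $f\D=0$.

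The main obstacle will be the bookkeeping around supermodule types (M versus Q) and the Clifford tensor factor. Specifically, I need to check that a simple $(\awrd)\otimes\calc_{r_0+2d}$-supermodule factors as $U\otimes V$, where $V$ is the unique simple $\calc_{r_0+2d}$-supermodule, and that tensoring with this Clifford module preserves diagonal Cartan entries. The fact that every simple $\awrd$-supermodule is of type M (as observed in the proof of \cref{L210322_6}) together with the Morita superequivalence $\calc_{2m}\sim\bbf$ for $m\geq1$ should make this routine, but it is the one place where the various equivalences need to be lined up carefully.
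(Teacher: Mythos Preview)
Your proposal is correct and takes essentially the same approach as the paper: both argue by contradiction, comparing the diagonal Cartan invariant $[\P^{\nu\sqcup(3d)}:\D^{\nu\sqcup(3d)}]=2d+1$ (from \cref{L210322}) against the strict lower bound $\ol c_{\mu,\mu}>2d+1$ for $\awrd$ (from \cref{L210322_2}), using the general fact that idempotent truncation preserves diagonal Cartan entries for simples not killed by the idempotent. The paper states this last fact as a one-line general principle rather than unpacking it as you do, and handles your ``main obstacle'' simply by invoking the assumption that the block is of type~M; your more explicit bookkeeping with the Clifford factor and Morita superequivalences is sound but not strictly necessary.
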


\begin{proof}
We prove \cref{fd0} by comparing diagonal Cartan entries. In general, if $e$ is an idempotent in an algebra $R$ and $M$ is a simple $R$ module such that $eM\neq0$, then the diagonal Cartan entry $[P(eM):eM]$ for $eRe$ is the same as the diagonal Cartan entry $[P(M):M]$ for $R$. By \cref{L210322} (and our assumption that the block $\calb^{\nu,d}$ is of type M) the Cartan number $[P(\D):\D]$ for $\calb^{\nu,d}$ is $2d+1$. On the other hand by \cref{L210322_2} any diagonal entry of the Cartan matrix of $\awrd$ is larger than $2d+1$. So $f\D$ must be zero.
\end{proof}

Next we want to consider the $\calbc^{\nu,d}$-module corresponding to $\s^{\nu+(3^d)}$.

\begin{propn}\label{fsnon0}
Let $\s$ be the supermodule corresponding to $\s^{\nu+(3^d)}$ under the Morita equivalence between $\calb^{\nu,d}$ and $\calbc^{\nu,d}$. Then $f\s\neq0$.
\end{propn}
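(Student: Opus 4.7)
The plan is to use the explicit description of the idempotent $f$ from \cite{KL} to identify $f\s$ as a non-zero module over $\awrd\otimes\calc_{r_0+2d}$. Under the Morita-type isomorphism of \cref{klthmf}, one expects the ordinary supermodules $\s^{\nu+3\al}$ for $\al\in\Par(d)$ to correspond, via the functor $M\mapsto fM$, to specific induced supermodules of $\awrd\otimes\calc_{r_0+2d}$ built from the partition data of $\al$. Concretely, the expected identification is
\[
f\s^{\nu+3\al}\;\longleftrightarrow\;T(A,\spe{\al})\otimes(\text{Clifford factor}),
\]
possibly up to a parity twist, where the left-hand notation refers to the construction preceding \cref{L210322_7}.

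Specialising this to $\al=(d)$, the image $f\s$ will correspond to $T(A,\spe{(d)})$. Since $\spe{(d)}$ is one-dimensional and $A=\F[u]/(u^3)$ has dimension $3$, we have $T(A,\spe{(d)})\cong A^{\otimes d}$ of dimension $3^d$, which is obviously non-zero. This will give $f\s\neq0$ as required. Note that this is consistent with \cref{fd0}: the head $\D^{\nu\sqcup(3d)}$ of $\s$ is killed by $f$, but $\s$ will necessarily have further composition factors that survive under $f$ — indeed this will ultimately be the route to \cref{T210322}, via the contrapositive.

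The main obstacle will be justifying the explicit identification of $f\s^{\nu+3\al}$ with $T(A,\spe{\al})$, since \cref{klthmf} only asserts an algebra isomorphism and does not on its face track the image of specific ordinary supermodules. One would need to unwind the Kleshchev--Livesey construction, taking account of the passage from $\calt_n$ to the Sergeev superalgebra $\caltc_n$ and the Clifford Morita equivalence, and verify that the combinatorial labelling $\al\leftrightarrow\spe\al$ really is the one that underlies their correspondence. If an off-the-shelf identification is not available, a fallback is to work directly with weight spaces: choose a standard shifted $(\nu+(3^d))$-tableau $t$ whose residue sequence $i^t$ is compatible with a sequence of restriction/induction functors entering the construction of $f$ (using \cref{tabweight} and \cref{branchingrule}), and track a corresponding weight vector to verify explicitly that $f$ does not annihilate it.
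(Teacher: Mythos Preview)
Your proposal sketches two routes. The primary one --- identifying $f\s^{\nu+3\al}$ with $T(A,\spe\al)$ up to Clifford factors --- is not a proof as written: you correctly flag that \cref{klthmf} gives only an algebra isomorphism and does not track specific ordinary supermodules, and establishing the precise correspondence would require substantial input from \cite{KL} or \cite{FKM} that is not developed in this paper. There is also a small slip: for $\la=\nu+(3^d)$ the relevant partition is $\al=(1^d)$, not $\al=(d)$, since $(3^d)=3(1^d)$. This does not affect the intended conclusion (as $T(A,\spe{(1^d)})$ is equally non-zero), but it is worth correcting.

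Your fallback is exactly the paper's argument, and is what you should lead with. The paper observes that, by construction of $f$, one has $fM\neq0$ if and only if $M$ possesses a weight $i=(i_1,\dots,i_n)$ with $(i_1,\dots,i_{|\nu|})$ a weight of $\s^\nu$ and, for each $1\le j\le d$, the triple $(i_{|\nu|+3j-2},i_{|\nu|+3j-1},i_{|\nu|+3j})$ a permutation of $(0,0,1)$. It then exhibits such a weight for $\s^{\nu+(3^d)}$ via \cref{tabweight}: take any standard shifted $\nu$-tableau on the nodes of $\nu$, and for $j=1,\dots,d$ place $|\nu|+3j-2,|\nu|+3j-1,|\nu|+3j$ in the nodes $(j,\nu_j+1),(j,\nu_j+2),(j,\nu_j+3)$, which have residues $0,0,1$. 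This is short and entirely self-contained; there is no need for the speculative module identification.
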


\begin{proof}
To prove this, we need to examine the idempotent $f$ in more detail and consider weight spaces. The way $f$ is constructed means that if $M$ is a module then $fM\neq0$ \iff $M$ has a weight $i=(i_1,\dots,i_n)$ such that:
\begin{itemize}
\item
$(i_1,\dots,i_{|\nu|})$ is a weight of $s^\nu$;
\item
for $j=1,\dots,d$ the residues $i_{|\nu|+3j-2},i_{|\nu|+3j-1},i_{|\nu|+3j}$ equal $0,0,1$ in some order.
\end{itemize}
To show that $\s$ has such a weight, we use \cref{tabweight}, and we need to show that there is a standard shifted $(\nu+(3^d))$-tableau with $i^t$ of the above form. But this is straightforward: we take any standard shifted $\nu$-tableau, and then for $j=1,\dots,d$ map $|\nu|+3j-2,|\nu|+3j-1,|\nu|+3j$ to the nodes $(j,\nu_j+1),(j,\nu_j+2),(j,\nu_j+3)$.
\end{proof}

\begin{proof}[Proof of \cref{T210322}]
Let $\s$ be the supermodule corresponding to $\s^{\nu+(3^d)}$ under the Morita equivalence between $\calb^{\nu,d}$ and $\calbc^{\nu,d}$, and let $f\in\calbc^{\nu,d}$ be the idempotent described above. By \cref{fsnon0} $\s$ has a a composition factor $M$ such that $fM\neq0$. On the other hand, \cref{L300921,fd0} show that $\s$ also has a composition factor $\D=\D^{(\nu+(3^d))^\reg}=\D^{\nu\sqcup(3d)}$ such that $f\D=0$. So $\s$ has composition factors of at least two isomorphism types.
\end{proof}

\begin{cor}\label{C210322}
Let $\nu$ be a $3$-bar core and $\al=(a^b)$ with $b\leq l(\nu)$. Then $\nu+3\al$ is homogeneous if and only if $\al=(a)$ or $\al=(1^2)$.
\end{cor}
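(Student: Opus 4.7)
My plan is to prove the corollary by case analysis on the shape of the rectangle $\al=(a^b)$, using Theorem~\ref{T210322}, Lemma~\ref{L210322_3} (which frees us to change the 3-bar core $\nu$), and Lemma~\ref{L210322_4} (which lets us remove a column from $\al$).

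For the positive side (showing that $\al=(a)$ and $\al=(1^2)$ yield homogeneous $\nu+3\al$), I would first handle $\al=(a)$ by applying Lemma~\ref{L210322_3} to pass to the 3-bar core $\nu=(1)$, so that $\nu+3\al=(3a+1)$ is a one-part partition, hence homogeneous by Theorem~\ref{walthm}(i). For $\al=(1^2)$, I would again invoke Lemma~\ref{L210322_3} to choose a convenient 3-bar core such as $\nu=(4,1)$, giving $\nu+3\al=(7,4)$; the relevant block has bar core $(4,1)$, weight $2$, and only two simple labels $(6,4,1)$ and $(4,3,3,1)$. Direct block analysis---via the Brundan--Kleshchev branching rules, Lemma~\ref{L300921}, and the known decomposition data of~\cite{Ma}---should then establish $[\s^{(7,4)}:\D^{(4,3,3,1)}]=0$ and hence homogeneity.

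For the negative side (showing that all other rectangular $\al$ yield non-homogeneous $\nu+3\al$), the case $\al=(1^b)$ with $b\geq 3$ follows directly from Theorem~\ref{T210322}, since $\nu+3(1^b)=\nu+(3^b)$, and Lemma~\ref{L210322_3} extends this to any admissible 3-bar core $\nu$. For $\al=(a^b)$ with $a\geq 2$ and $b\geq 3$, I would iterate Lemma~\ref{L210322_4} (alternating with Lemma~\ref{L210322_3} to reset the core) to reduce column-by-column down to $\al=(1^b)$, then take the contrapositive along the chain.

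The main obstacle is the case $\al=(a^2)$ with $a\geq 2$, where column-removal terminates at the homogeneous partition $(1^2)$, so the contrapositive strategy degenerates. My plan here is a direct dimension argument: for $\nu=(3l-2\df1)$ with $l\geq 2$, the partition $\nu+3(a^2)=(3l+3a-2,\,3l+3a-5,\,3l-8\df1)$ should be compared via Proposition~\ref{basicdeg} with another strict partition $\mu$ in the same regularization class of strictly smaller $\ddeg$; failing that, Lemma~\ref{projlem} applied to a suitable projective module (obtained by inducing from a smaller case) would yield the same conclusion. The difficulty is producing $\mu$ (or the projective) uniformly in $a$, since the existing dimension lemmas in Section~\ref{barlensec} do not quite match the shape $\nu+3(a^2)$ and a new computation seems required.
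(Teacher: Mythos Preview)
Your overall structure matches the paper's proof exactly for the cases $\al=(a)$, $\al=(1^2)$, and $b\geq 3$. The only place you diverge is the case $b=2$, $a\geq 2$, which you flag as the main obstacle and propose to attack via a new dimension computation or projective-module argument.

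In fact the paper handles this case by the same column-removal strategy you already use for $b\geq 3$, simply stopping one step earlier. Rather than reducing all the way down to $\al=(1^2)$ (which is homogeneous, so the contrapositive chain breaks there), stop at $\al=(2^2)$: by Lemma~\ref{L210322_3} it suffices to show that $(4,1)+3(2^2)=(10,7)$ is not homogeneous, and this can be read off from the known decomposition matrices of~\cite{Ma}, just as you read off homogeneity of $(7,4)$ for the positive case $\al=(1^2)$. Then Lemma~\ref{L210322_4} in contrapositive form, alternating with Lemma~\ref{L210322_3} to reset the core, pushes non-homogeneity from $(2^2)$ up to $(a^2)$ for every $a\geq 2$ and every admissible $\nu$. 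No new dimension lemma or projective argument is needed.
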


\begin{proof}
If $\al=(a)$ then $\nu+3\al$ is homogeneous by \cref{walthm,L210322_3}. If $\al=(1^2)$ we can argue again by \cref{L210322_3} using the fact that the composition factors of $\s^{(7,4)}$ are known.

Assume next that $b=2$ and $a\geq 2$. Since $(10,7)$ is not homogeneous (from known decomposition numbers), $\nu+3\al$ is also not homogeneous by \cref{L210322_3,L210322_4}. For the case where $b\geq 3$ and $a=1$ \cref{T210322} implies that $\nu+3\al$ is not homogeneous, and then the general case where $b\geq3$ follows using \cref{L210322_4}.
\end{proof}

\section{Proof of \cref{mainthm}}\label{proofmainthm}

In this final section we give the proof of \cref{T081021}, which is a reformulation of our main result \cref{mainthm}. We begin with some combinatorial lemmas we will need later. The proof of the following \lcnamecref{casefourlem} is an easy exercise.

\begin{lemma}\label{casefourlem}
For $l\gs1$ define the partitions
\[
\si(l)=(3l-2\df7,\ 6,4,3,1),\qquad\tau(l)=(3l-1\df8,\ 6,5,3,2).
\]
Then $\si(l)^{+1}=\tau(l)$ and $\tau(l)^{+0}=\si(l+1)$.
\end{lemma}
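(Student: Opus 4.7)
The lemma is a direct combinatorial verification. The plan relies on the fact that for $p=3$ the residue of a node $(r,c)$ depends only on the column: the node has residue $1$ precisely when $c\equiv2\pmod3$, and residue $0$ otherwise (this follows from the ladder description in \cref{regnsec}, since for $p=3$ even ladders have residue $0$ and odd ladders residue $1$, and the parity of the ladder is determined by $\lfloor 2c/3\rfloor$).

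For the first claim $\si(l)^{+1}=\tau(l)$, I would enumerate the strictly-addable $1$-nodes of $\si(l)$. A row of $\si(l)$ ending at column $c$ gives an addable $1$-node at column $c+1$ precisely when $c\equiv1\pmod3$. Reading off $\si(l)=(3l-2,3l-5,\dots,7,6,4,3,1)$, this holds for rows $1,\dots,l-2$ (with $\si(l)_r=3l-3r+1$), for row $l$ (ending at $4$), and for row $l+2$ (ending at $1$); all other rows end at columns $\equiv0\pmod3$. For each such row, extending by the single $1$-node preserves strictness, since the gap between consecutive parts of $\si(l)$ is at least $2$. Moreover, no row can absorb two $1$-nodes, because two consecutive columns cannot both be $\equiv2\pmod3$. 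Summing these individual extensions transforms $\si(l)$ into $\tau(l)$ part-by-part.

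For the second claim $\tau(l)^{+0}=\si(l+1)$ the analysis is analogous but subtler, since runs of two consecutive $0$-columns appear between the $1$-node barriers. A row ending at $c\equiv2\pmod3$ admits two addable $0$-nodes (at $c+1,c+2$) before hitting the $1$-node at $c+3$; this applies to rows $1,\dots,l-2$ and rows $l,l+2$ of $\tau(l)$. A row ending at $c\equiv0\pmod3$ admits only one addable $0$-node; this applies to rows $l-1,l+1$. Finally, $(l+3,1)$ is an addable $0$-node forming a new row. Imposing strictness, I would argue that the strictly-addable $0$-nodes correspond exactly to: two nodes added to each of rows $1,\dots,l-2$, one node added to each of rows $l-1,l,l+1,l+2$, and one node forming row $l+3$. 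Tallying gives $(3l+1,3l-2,\dots,10,7,6,4,3,1)=\si(l+1)$.

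The main obstacle is the case analysis showing that rows $l$ and $l+2$ can only absorb one new $0$-node rather than two, despite having two $0$-columns available. The point is that the adjacent rows $l-1$ and $l+1$ each end with a $1$-node barrier after only one step, so they cannot be pushed beyond lengths $7$ and $4$ respectively; consequently, lengthening row $l$ (or $l+2$) to $7$ (respectively $4$) would violate strictness against the grown row $l-1$ (respectively $l+1$). Away from these boundary rows the induction down the arithmetic progression is routine, and the small cases $l=1,2$ (where the progression is empty) can be verified directly.
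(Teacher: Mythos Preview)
Your proof is correct and follows exactly the direct combinatorial verification the paper has in mind; the paper itself gives no argument beyond calling the lemma ``an easy exercise''. One minor imprecision: not all consecutive gaps in $\si(l)$ are at least $2$ (for instance $4$ and $3$), but the gaps above the rows you actually extend are, so the strictness check goes through as you intend.
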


\begin{lemma}\label{L110222}
Suppose $\la$ is a strict partition with no parts congruent to $1$ modulo $3$. Then $(\la^{+0})^{+1}$ has no parts congruent to $1$ modulo $3$.
\end{lemma}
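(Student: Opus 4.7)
The plan is to analyse the parts of $\la^{+0}$ and $(\la^{+0})^{+1}$ modulo $3$ directly, exploiting that for $p=3$ the residue of a node $(r,c)$ depends only on $c$: it is $1$ when $c\equiv2\pmod3$ and $0$ otherwise. Consequently a $0$-node extension at the right of row $r$ is possible exactly when $\la_r\not\equiv1\pmod3$, while a $1$-node extension requires the current length of row $r$ to be $\equiv1\pmod3$. Under the hypothesis every $\la_r$ lies in $\{0,2\}\pmod3$, so each row admits a $0$-node extension: a row with $\la_r\equiv0\pmod3$ by one $0$-node (to $\la_r+1\equiv1$), a row with $\la_r\equiv2\pmod3$ by one or two $0$-nodes (to $\la_r+1\equiv0$ or $\la_r+2\equiv1$), and a new bottom row of length $1$ may also be appended.

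Write $\mu=\la^{+0}$. I would then identify which parts $\mu_r$ satisfy $\mu_r\equiv1\pmod3$: these arise from rows with $\la_r\equiv0$ extended by one $0$-node, rows with $\la_r\equiv2$ extended by two $0$-nodes, or the new bottom row of length $1$. The aim is to show that each such row admits a strictly-addable $1$-node at position $(r,\mu_r+1)$, so that in $\mu^{+1}$ the part becomes $\mu_r+1\equiv2\pmod3$; rows with $\mu_r\not\equiv1\pmod3$ have no $1$-node at their right end (the next column has residue $0$), so their parts are unchanged modulo $3$, and the conclusion follows.

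Strict-addability of the $1$-node at $(r,\mu_r+1)$ splits into cases according to $\mu_{r-1}\bmod3$. If $\mu_{r-1}\equiv0\pmod3$, then $\mu_{r-1}-\mu_r\equiv2\pmod3$ is strictly positive, hence at least $2$, leaving room for the new node. If $\mu_{r-1}\equiv1\pmod3$, rows $r-1$ and $r$ are extended jointly by $1$-nodes and strictness is preserved. The only potential obstruction is $\mu_{r-1}\equiv2\pmod3$ with $\mu_{r-1}=\mu_r+1$, and this is the main work of the proof: I would rule it out by showing such a configuration contradicts the maximality of $\la^{+0}$ as the partition obtained by adding all strictly-addable $0$-nodes. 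Concretely, a short case analysis on $(\la_{r-1},\la_r)\bmod3$ together with the relation $\mu_{r-1}=\mu_r+1$ exhibits a further strictly-addable $0$-node for row $r-1$ not present in $\mu$, contradicting the definition of $\la^{+0}$. Producing this extra $0$-node in every subcase is the only routine but fiddly part of the argument.
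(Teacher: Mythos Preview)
Your approach is correct and is essentially the paper's argument, reorganised as a forward analysis rather than a proof by contradiction. The paper simply assumes $(\la^{+0})^{+1}_r=3a+1$, deduces $\la^{+0}_r=3a+1$ and that $(r,3a+2)$ is not strictly-addable for $\la^{+0}$, whence $\la^{+0}_{r-1}=3a+2$; then (using the hypothesis on $\la$) $\la_{r-1}=3a+2$ and $(r-1,3a+3)$ is not a strictly-addable $0$-node of $\la$; this forces $(\la_{r-3},\la_{r-2})=(3a+4,3a+3)$, and $\la_{r-3}\equiv1\pmod3$ gives the contradiction.

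One small imprecision in your outline: the final step cannot be completed by a case analysis on $(\la_{r-1},\la_r)\bmod 3$ alone. Once you have $\mu_{r-1}\equiv2$ and hence $\la_{r-1}=\mu_{r-1}=3a+2$, the candidate extra $0$-node is $(r-1,3a+3)$, but to certify it as \emph{strictly}-addable you must look at rows $r-2$ and $r-3$: if $\la_{r-2}=3a+3$ you need $(r-2,3a+4)$ (also a $0$-node) to be jointly addable, and it is precisely the hypothesis $\la_{r-3}\not\equiv1\pmod3$ (so $\la_{r-3}\neq 3a+4$, hence $\la_{r-3}\geq 3a+5$) that guarantees this. That two-row backtrack is the whole content of the argument, and it is exactly what the paper's contradiction chain records.
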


\begin{proof}
Assume for a contradiction that $(\la^{+0})^{+1}_r=3a+1$. Then $\la^{+0}_r=3a+1$ and $(r,3a+2)$ is not a \sadbl node of $\la^{+0}$. This can only happen if $r\gs2$ and $\la^{+0}_{r-1}=3a+2$. This in turn means that $\la_{r-1}=3a+2$ and $(r-1,3a+3)$ is not a \sadbl node of $\la$. This is only possible if $r\gs 4$ and $(\la_{r-3},\la_{r-2})=(3a+4,3a+3)$. But this contradicts the hypothesis.
\end{proof}

\begin{lemma}\label{L110222_2}
Suppose $\la$ is a strict partition with no parts congruent to $1$ modulo $3$. Then $l((\la^{+0})^{+1})=l(\la)+1$ and $(\la^{+0})^{+1}_{l((\la^{+0})^{+1})}=2$.
\end{lemma}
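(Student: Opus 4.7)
Set $l = l(\la)$. The plan is to analyse $\la^{+0}$ and then $(\la^{+0})^{+1}$ near the bottom of the diagram, using that for $p=3$ a node $(r,c)$ has residue $1$ exactly when $c \equiv 2 \pmod 3$, and residue $0$ otherwise. The case $l=0$ is immediate ($\la^{+0}=(1)$ and $(\la^{+0})^{+1}=(2)$), so I assume $l \geq 1$.

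First I would show that $l(\la^{+0}) = l+1$ with $\la^{+0}_{l+1} = 1$ and $\la^{+0}_l \geq 3$. For the length statement, the $0$-node $(l+1,1)$ is \sadbl by itself (since $\la_l \geq 2$ from the hypothesis that $\la$ has no parts $\equiv 1 \pmod 3$), while no $0$-node in row $l+2$ or below can be \sadbl, because any set of $0$-nodes containing such a node would force two new parts equal to~$1$. For the inequality $\la^{+0}_l \geq 3$, if $\la_l \equiv 0 \pmod 3$ then the $0$-node $(l,\la_l+1)$ is \sadbl alone, because $\la_{l-1} \neq \la_l+1$ (the latter being $\equiv 1 \pmod 3$); if instead $\la_l \equiv 2 \pmod 3$, then $(l,\la_l+1)$ is \sadbl either alone or, when $\la_{l-1} = \la_l+1$, paired with $(l-1,\la_{l-1}+1)$, and this pairing succeeds because the hypothesis forces $\la_{l-2} \geq \la_{l-1}+2$ (otherwise $\la_{l-2} = \la_{l-1}+1 \equiv 1 \pmod 3$).

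Finally I would compute $(\la^{+0})^{+1}$. The node $(l+1,2)$ has residue $1$ and, given $\la^{+0}_{l+1}=1$ and $\la^{+0}_l \geq 3$, is \sadbl alone. No further $1$-node can be added in row $l+1$ (the next residue-$1$ position is $(l+1,5)$, unreachable using only $1$-additions starting from length $1$), and no new row can be opened since $(l+2,1)$ has residue $0$. Hence $(\la^{+0})^{+1}_{l+1} = 2$ and $l((\la^{+0})^{+1}) = l+1$, as required.

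The main delicate step is the inequality $\la^{+0}_l \geq 3$ when $\la_l \equiv 2 \pmod 3$ and $\la_{l-1} = \la_l + 1$, where strict-addability must be witnessed by a two-node set rather than a single node; this is the one point where the no-parts-$\equiv 1 \pmod 3$ hypothesis enters non-trivially (to rule out $\la_{l-2}$ taking the forbidden value $\la_{l-1}+1$). The remaining checks are direct consequences of the $p=3$ residue pattern and the definition of \sadbl nodes.
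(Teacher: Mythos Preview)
Your proof is correct, but it takes a different route from the paper's. The paper's argument is a two-liner: having observed $\la^{+0}_{l+1}=1$ (exactly as you do), it simply invokes the preceding Lemma~\ref{L110222}, which says $(\la^{+0})^{+1}$ has no parts congruent to $1$ modulo~$3$. Since the $(l{+}1)$th part of $(\la^{+0})^{+1}$ is at least $\la^{+0}_{l+1}=1$ and at most $2$ (the next residue-$1$ column being $5$), and cannot equal $1$, it equals $2$; the length claim is then immediate.

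You instead avoid Lemma~\ref{L110222} entirely and verify directly that $(l+1,2)$ is strictly addable, by proving $\la^{+0}_l\geq3$ via a case split on $\la_l\bmod 3$. This is self-contained and makes the mechanism transparent, but it costs you the extra case analysis (including the two-node addability check when $\la_{l-1}=\la_l+1$). The paper's approach is slicker because the ``no parts $\equiv1$'' conclusion of Lemma~\ref{L110222} already encodes exactly the obstruction you are working around by hand. Incidentally, in your case $\la_l\equiv0\pmod3$ you don't even need the addability of $(l,\la_l+1)$: since $\la_l\geq3$ already, $\la^{+0}_l\geq\la_l\geq3$ is immediate.
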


\begin{proof}
Let $l=l(\la)$. Then $\la^{+0}_{l+1}=1$ because $\la_l\gs2$ (or $l=0$), so that $(l+1,1)$ is a \sadbl node of $\la$. The result then follows from \cref{L110222}.
\end{proof}

\begin{lemma}\label{L110222_3}
Suppose $\la$ is a strict partition with no parts congruent to $1$ modulo $3$. Suppose $1\leq r<s\leq l(\la)$ and $x\gs1$ such that
\[
(\la_r,\dots,\la_s)=(3x,\ 3x-1\df3(x-r+s+1)-1)
\]
Then $(\la^{+0})^{+1}_r\gs3x+2$ and
\[
((\la^{+0})^{+1}_{r+1},\dots,(\la^{+0})^{+1}_s)=(3x,\ 3x-1\df3(x-r+s+2)-1).
\]
\end{lemma}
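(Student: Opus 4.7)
The plan is to compute $\la^{+0}$ and then $(\la^{+0})^{+1}$ directly at rows $r,r+1,\dots,s$, in the spirit of the proofs of \cref{L110222,L110222_2}. For $p=3$, a column $c$ has residue $0$ iff $c\not\equiv 2\pmod 3$ and residue $1$ iff $c\equiv 2\pmod 3$; in each staircase row I will identify the available residue-$0$ columns past $\la_{r+k}$, terminated by the next residue-$1$ column.

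For $\la^{+0}$: row $r$ has only the single residue-$0$ column $3x+1$ before the residue-$1$ column $3x+2$, giving $(\la^{+0})_r=3x+1$. Row $r+1$ has residue-$0$ candidates $3x$ and $3x+1$, but the latter is blocked by strictness against row $r$, so $(\la^{+0})_{r+1}=3x$. For $k\geq 2$, row $r+k$ has the two residue-$0$ columns $3x-3k+3$ and $3x-3k+4$ before the residue-$1$ column $3x-3k+5$; both are strictly addable, giving $(\la^{+0})_{r+k}=3x-3k+4$. Passing to $(\la^{+0})^{+1}$: row $r+1$ gains no residue-$1$ node since the next column $3x+1$ has residue $0$; for $2\leq k\leq s-r$ the unique accessible residue-$1$ column $3x-3k+5$ is strictly addable because $(\la^{+0})_{r+k-1}>3x-3k+5$ (we have $3x>3x-1$ for $k=2$, and $3x-3k+7>3x-3k+5$ for $k\geq 3$). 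This delivers the claimed values at positions $r+1,\dots,s$.

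The main obstacle is the lower bound $(\la^{+0})^{+1}_r\geq 3x+2$, equivalently that $(r,3x+2)$ is strictly addable to $\la^{+0}$. This is trivial for $r=1$; for $r\geq 2$ it reduces to $\la^{+0}_{r-1}\geq 3x+3$. The hypothesis gives $\la_{r-1}\geq 3x+2$ (since $3x+1\equiv 1\pmod 3$ is excluded), so the only nontrivial case is $\la_{r-1}=3x+2$. Here I will use a cascade argument modelled on the proof of \cref{L110222}, showing that $^{+0}$ adds the node $(r-1,3x+3)$ by examining $\la_{r-2}$: the no-parts-$\equiv 1$ hypothesis excludes $\la_{r-2}=3x+4$, forcing either $\la_{r-2}\geq 3x+5$ (immediate addability) or $\la_{r-2}=3x+3$, in which case one more cascade step shows $^{+0}$ also adds $(r-2,3x+4)$, supported by $\la_{r-3}\geq 3x+5$ (or by $r=3$). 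The cascade terminates in at most two steps.
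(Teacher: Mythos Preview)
Your proof is correct. The approach is the same as the paper's in spirit---a direct residue-by-residue verification of which nodes get added in $^{+0}$ and then $^{+1}$---but you carry out the computation in full, whereas the paper's proof is extremely terse (two sentences, citing only $\la_{r-1}\geq 3x+2$ and \cref{L110222}).

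The one genuine difference is in the first statement. The paper's intended argument is slicker: from $\la_{r-1}\geq 3x+2$ one sees immediately that $(r,3x+1)$ is a strictly-addable $0$-node, so $(\la^{+0})^{+1}_r\geq(\la^{+0})_r\geq 3x+1$; then \cref{L110222} rules out the value $3x+1$, forcing $(\la^{+0})^{+1}_r\geq 3x+2$. You instead establish directly that $(\la^{+0})_{r-1}\geq 3x+3$, making $(r,3x+2)$ strictly-addable to $\la^{+0}$, via your two-step cascade. This cascade is essentially a localised reproof of the relevant case of \cref{L110222}, so it works but is longer than simply invoking that lemma. (A tiny omission: when $r=2$ and $\la_1=3x+2$ there is no $\la_{r-2}$ to examine; the node $(1,3x+3)$ is then strictly-addable for free. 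You handle the analogous boundary at the next step with ``or by $r=3$'', and the same remark applies one level up.)

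For the second statement your explicit row-by-row computation is exactly what the paper's ``by \cref{L110222}'' leaves to the reader.
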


\begin{proof}
The first statement holds because $\la_{r-1}\gs3x+2$ if $r\geq2$. The second statement holds by \cref{L110222}.
\end{proof}

Now we turn to \cref{mainthm}. We fix $p=3$ for the remainder of the paper. Let
\begin{align*}
H_0&=\lset{\la}{\la\text{ is special}},
\\
H_1&=\lset{(3a)}{a\gs2},
\\
H_2&=\lset{\nu\sqcup(3)}{\nu\text{ a $3$-bar core}},
\\
H_3&=\{(2,1),(3,2,1),(4,3,2),(4,3,2,1),(5,3,2,1),(5,4,3,1),
\\
&\phantom{=\{}(5,4,3,2),(5,4,3,2,1),(7,4,3,2,1),(8,5,3,2,1)\}.
\end{align*}
First we deal with the ``if'' part, and check that the partitions i $H_1\cup H_2\cup H_3$ are homogeneous. For the partitions in $H_1$ this follows from \cref{walthm}, and for the partitions in $H_2$ we use \cref{mullerthm}. For partitions in $H_3$ we can use known decomposition numbers for all except the last partition $(8,5,3,2,1)$. In particular, we know that $(7,4,3,2,1)$ is homogeneous. Since  $(8,5,3,2,1)=(7,4,3,2,1)^{+1}$, the partition $(8,5,3,2,1)$ is also homogeneous in view of \cref{T300921_2}.

It remains to prove the ``only if'' part of the theorem. So we assume $\la$ is a strict partition with $\la\notin H_0\cup H_1\cup H_2\cup H_3$, and we will show that $\la$ is not homogeneous. We use induction on $n=|\la|$. So we assume the result is true for all partitions smaller than $\la$. Recall from \cref{T300921} that if there is $i\in\{0,1\}$ for which $\la^{-i}$ is not homogeneous, then $\la$ is not homogeneous. So if there is $i$ such that $\la^{-i}\notin H_0\cup H_1\cup H_2\cup H_3\cup\{\la\}$, then we are done by induction. So we assume for the rest of this section that $\la^{-0},\la^{-1}\in H_0\cup H_1\cup H_2\cup H_3\cup\{\la\}$. Because $\la$ must have a \srmbl $i$-node for some $i$, there is at least one $i$ for which $\la^{-i}\neq\la$. So we choose $i$ such that $\la^{-i}\neq\la$, and we let $a=n-|\la^{-i}|=\heps_i(\la)$. In view of \cref{T300921} we can assume that $\eps_i(\la^\reg)=a$ and $(\la^{-i})^\reg=(\la^\reg)\im=\te_i^a(\la^\reg)$, and therefore $\la^\reg=\tf_i^a((\la^{-i})^\reg)$.

With these assumptions in place, we consider several cases and subcases.

\case
\edef\onepartcase{\caselabel}
Suppose $\la^{-i}=(m)$ for some $m<n$. Then $i=0$ and $\la=(n-1,1)$ with $n\nequiv2\ppmod3$ and $n\gs6$. Now the theorem follows by \cref{walthm}.

\case
Suppose $\la^{-i}$ is one of the partitions $(2,1)$, $(3,2,1)$, $(4,3,2)$, $(4,3,2,1)$, $(5,3,2,1)$, $(5,4,3,1)$, $(5,4,3,2)$, $(5,4,3,2,1)$, $(7,4,3,2,1)$, $(8,5,3,2,1)$.

Since $\heps_i(\la^{-i})=0$ and $\hphi_i(\la^{-i})\geq1$, it follows that
\[
\la^{-i}=
\begin{cases}
(4,3,2)\text{ or }(5,4,3,2)&\text{if $i=0$}
\\
(4,3,2,1),(5,4,3,1)\text{ or }(7,4,3,2,1)&\text{if $i=1$.}
\end{cases}
\]
Since $\la^\reg=\tf_i^a((\la^{-i})^\reg)$ (and $\la^{-i}=(\la^{-i})^\reg$ in each of the above cases), it follows that
\begin{align*}
\la\in\,\{&(4,3,2,1),(5,4,3,2,1),(6,4,3,2,1),(7,4,3,2,1),(5,3,2,1),\\
&(5,4,3,2),(7,5,3,2,1),(8,4,3,2,1),(8,5,3,2,1)\}.
\end{align*}
But the assumption that $\la\notin H_3$ then gives
\[
\la\in\{(6,4,3,2,1),(7,5,3,2,1),(8,4,3,2,1)\}.
\]
The case $\la=(6,4,3,2,1)$ can be excluded by consulting decomposition matrices. If $\la=(7,5,3,2,1)$ or $(8,4,3,2,1)$ then $i=1$; but $\la^{-0}=(6,5,3,2)$ or $(8,4,3,2)$, so $\la^{-0}\notin H_0\cup H_1\cup H_2\cup H_3\cup\{\la\}$, contrary to assumption.
\case
Suppose $\la^{-i}=(3l-2\df4,\ 3,1)$ with $l\geq 1$. Since $\heps_0(\la^{-i})>0$, we deduce that $i=1$ and $a\leq\hphi_1(\la^{-1})=l$. In fact $a<l$, since if $a=l$ then $\la=(3l-1\df2)\sqcup(3)\in H_2$, contrary to assumption.

Since $\la^{-1}$ is restricted, $(\la^{-1})^\reg=\la^{-1}$. So
\[
\la^\reg=\tf_1^a((\la^{-1})^\reg)=(3l-2\df3a+1,3a-1\df2)\sqcup(3)
\]
and then $\heps_0(\la)=\eps_0(\la^\reg)=\max\{0,2(l-a)-2a-1\}$.

By assumption $a\geq 1$, so that $(l+1,2)$ must be added to $\la^{-1}$ to obtain $\la$ (otherwise $\la^\reg$ would not have the right form). In the first $l-1$ rows of $\la$ there are $a-1$ parts which are congruent to $2$ modulo $3$ and $l-a$ parts which are congruent to $1$. Now we give a lower bound for $\heps_0(\la)$: in each row $j\ls l-2$ for which $\la_j\equiv1\Md3$ there is a \srmbl $0$-node, and if in addition $\la_{j+1}\not=\la_j-2$ there are two \srmbl $0$-nodes. Hence
\[
\heps_0(\la)\gs\max\{l-a-1,2(l-a)-a-1\},
\]
with equality only if $\la_{l-1}=4$. So
\[
\max\{0,2(l-a)-2a-1\}=\eps_0(\la^\reg)=\heps_0(\la)\gs\{l-a-1,2(l-a)-a-1\}.
\]
This contradicts the assumption that $1\ls a<l$, except in the case where $a=l-1$ and
\[
\la=(3l-1\df8,\ 4,3,2).
\]
When $l=2$ this partition lies in $H_3$, so assume $l\gs3$, and let
\begin{align*}
\nu=\la^{+0}&=(3l+1\df10,\ 4,3,2,1).
\\
\mu&=(3l+1\df10,\ 9,1).
\end{align*}
Then $\mu^\reg=\nu^\reg$, and we claim that $\ddeg\mu<\ddeg\nu$. For $l=3$ this can be checked directly, and for $l\gs4$ it follows by induction using \cref{l419}.

So $\nu$ is not homogeneous, and hence neither is $\la$.
\case
Suppose $\la^{-i}=(3l-2\df1)+3\al$ for some partition $\al$ with $l(\al)\leq l$. Then $\heps_0(\la^{-i})>0$, so $i=1$ and $a\leq\hphi_1(\la^{-1})=l$. In fact we can assume $a<l$, since if $a=l$ then $\la=(3l-1\df2)+3\al\in H_0$.

Since all parts of $\la^{-1}$ are congruent to $1$ modulo $3$,
\begin{align*}
(\la^{-1})^\reg&=(\la^{-1}_1)^\reg+\dots+(\la^{-1}_l)^\reg\\
\intertext{and therefore}
\la^\reg&=\tf_1^a((\la^{-1})^\reg)=(\la^{-1}_1+1)^\reg+\dots+(\la^{-1}_a+1)^\reg+(\la^{-1}_{a+1})^\reg+\dots+(\la^{-1}_l)^\reg.
\end{align*}
It follows that
\[\eps_0(\la^\reg)=\max\{0,2(l-a)-\de-2a\}\]
with $\de=1$ if $\la^{-1}_l=1$ and $\de=0$ otherwise. On the other hand, since $\la$ is obtained from $\la^{-1}$ by adding nodes at the end of $a$ rows we can use reasoning similar to the previous case to obtain
\[\heps_0(\la)\geq l-a+\max\{0,l-2a-\de\}=\max\{l-a,2(l-a)-\de-a\}.\]

Now the fact that $\eps_0(\la^\reg)=\heps_0(\la)$ contradicts the assumption that $1\ls a<l$.

\medskip
We have now dealt with all cases where $\la^{-1}\neq\la$. So for the remaining cases we assume $\la^{-1}=\la$.

\case
Suppose $\la^{-1}=\la$ and $\la^{-0}=(3l-1\df5,\ 3,2)$ with $l\gs1$. In this case $a\ls\hphi_0(\la^{-0})=2l+1$ and $(\la^{-0})^\reg=\la^{-0}$.

First suppose $a=1$. Then $\la=\la^\reg=(3l-1\df5,\ 3,2,1)$, and the assumption that $\la^{-1}=\la$ means that $l=1$, so that $\la=(3,2,1)\in H_3$.

So we can assume $a\gs2$. Because $\la^\reg=\tf_0^a(\la^{-0})$, the nodes added to $\la^{-0}$ to obtain $\la$ must be $(l+2,1)$, $(l+1,3)$ and $a-2$ nodes in rows $1,\dots,l-1$. Since $\la$ is strict, the node $(l,4)$ must be added, so that in particular $a\gs3$. If $l=1$ then $\la=(4,3,1)\in H_2$, so assume from now on that $l\gs 2$. So if we let $a_r$ be the number of added nodes in row $r$ for $1\ls r\ls l-1$, then each $a_r\in\{0,1,2\}$, and $\la$ is determined by $(a_1,\dots,a_{l-1})$. The assumption that $\la^{-1}=\la$ means that there is no $1\ls r\ls l-2$ for which $a_r=0$ and $a_{r+1}<2$ (otherwise $\la$ would have a \srmbl $1$-node in row~$r$).

Now consider the partition $\la^{+1}$. This has a \sadbl $0$-node $(l+3,1)$; if it has any \srmbl $0$-nodes, these lie in shorter ladders, and so $\hep0{\la^{+1}}>\ep0{(\la^{+1})^\reg)}$ by \cref{dn0}, so that $\la^{+1}$, and hence $\la$, are not homogeneous. Note in particular that if there is $1\ls r\ls l-2$ for which $(a_r,a_{r+1})=(0,2)$, $(1,0)$ or $(1,1)$, then $\la^{+1}$ does have a \srmbl $0$-node (in row $r+1$, $r$ or $r$ respectively). So we can assume there is no such $r$.

So we assume that the tuple $(a_1,\dots,a_{l-1})$ contains no $0$s (except for possibly $a_{l-1}$), and does not contain two consecutive entries less than $2$.

\subcase

\edef\nucase{\subcaselabel}

Suppose there are at least two values of $r$ for which $a_r<2$. Let $r,s$ be the two smallest such values. Then
\begin{align*}
(\la_1,\dots,\la_{r-1})&=(3l+1\df3(l-r)+7),
\\
\la_r&=3(l-r)+3,
\\
(\la_{r+1},\dots,\la_{s-1})&=(3(l-r)+1\df3(l-s)+7),
\\
\la_s&=3(l-s)+3\text{ or }3(l-s)+2.
\end{align*}

Let
\[
\nu=((((\la^{+1})^{+0})^{+1})^{+0}\dots)^{+1},
\]
where ${}^{+1}$ appears $s-r$ times. As $r-s\geq 2$ we can calculate
\begin{align*}
(\nu_1,\dots,\nu_{r-1})&=(3(l+s-r)-1\df3(l+s-2r)+5),
\\
(\nu_r,\dots,\nu_{s-2})&=(3(l+s-2r)-1\df3(l-r)+5),
\\
\nu_{s-1}&=3(l-r)+3,
\\
\nu_s&\ls3(l-r)-1,
\end{align*}
so that $\nu$ has a \srmbl $0$-node $(s-1,3(l-r)+3)$.

\clam
If $a_{l-1}>0$ then $(l+s-r+2,1)$ is a \sadbl node of $\nu$.
\prof
Since $\la$ has length $l+2$, and each step of adding all \sadbl $0$-nodes can add at most one node in column $1$, the length of $\nu$ is at most $l+2+(s-r-1)$. So certainly $(l+s-r+2,1)$ is not a node of $\nu$. On the other hand, the assumption that $a_{l-1}>0$ means that $\la$ contains the partition $\si(l)$ from \cref{casefourlem}. So (by \cref{casefourlem}) $\nu$ contains the partition $\tau(l+s-r-1)$, and in particular contains the node $(l+s-r+1,2)$, so that $(l+s-r+2,1)$ is a \sadbl node of $\nu$.
\malc

\clam
If $a_{l-1}=0$ then $(l+s-r,3)$ is a \sadbl node of $\nu$.
\prof
In this case $(\la_{l-1},\la_l,\dots)=(5,4,3,1)$. So if we let $\xi$ be the partition obtained from $\la$ by adding all \sadbl $1$-nodes, then all \sadbl $0$-nodes, then all \sadbl $1$-nodes, then $(\xi_{l-1},\xi_l,\dots)=(8,5,3,2,1)$. In particular, $\xi'_3=l+1$. Since each step of adding all \sadbl $0$-nodes can add at most one node in column $3$, we get $\nu'_3\ls l+s-r-1$, so that $(l+s-r,3)$ is not a node of $\nu$. On the other hand, $\la$ contains the partition $\si(l-1)$, so by \cref{casefourlem}) $\nu$ contains the partition $\tau(l+s-r-2)$, and in particular contains the nodes $(l+s-r-2,5)$ and $(l+s-r-1,3)$, which means that $(l+s-r,3)$ is a \sadbl node of $\nu$.
\malc

In either case we see that $\nu$ has a \sadbl $0$-node in a longer ladder than the \srmbl node $(s-1,3(l-r)+3)$, so by \cref{dn0} $\nu$ is not homogeneous, and hence neither is $\la$.

\subcase

Suppose now that $(a_1,\dots,a_l)$ contains a single $0$, with the remaining entries being $2$s. Then $(a_1,\dots,a_{l-1})=(2,2,\dots,2,0)$, so that
\[
\la=(3l+1\df10,\ 5,4,3,1).
\]
If $l=2$ then $\la=(5,4,3,1)\in H_3$, so assume $l\gs3$. Now let
\[
\nu=\la^{+1}=(3l+2\df11,\ 5,4,3,2).
\]
Now we can calculate
\[
\nu^\reg=(3l-1,3l-2,\ 3l-4\df5,\ 3,2),
\]
giving
\[
(\nu^\reg)\ipi0=(3l+1,3l-2,3l-4,\ 3l-5\df4,\ 3,1).
\]
On the other hand,
\[
\nu^{+0}=(3l+4\df13,\ 7,4,3,2,1),
\]
giving
\[
(\nu^{+0})^\reg=(3l+1,3l-2,3l-3,\ 3l-5\df4,\ 2,1)\neq(\nu^\reg)\ipi0.
\]
So by \cref{T300921_2} $\nu$ is not homogeneous, and hence $\la$ is not homogeneous either.

\subcase

Finally consider the case where $(a_1,\dots,a_{l-1})$ contains a single $1$, and the rest are $2$s. Write $\la=\la_{(l,r)}$, where $a_r=1$ and $a_s=2$ for all $s\neq r$.

\subsubcase

First we suppose $r\ls l-2$ (so that in particular $l\gs3$). Then
\[
\la_{(l,r)}=(3l+1\df3(l-r)+7,\ 3(l-r)+3,\ 3(l-r)+1\df4,\ 3,1),
\]
and we obtain $\ddeg{\la_{(l,r)}}>\ddeg{\mu_{(l)}}$ and $\la_{(l,r)}^\reg=\mu_{(l)}^\reg$ by induction on $r$, using \cref{l419,deglem11}.

\subsubcase

We are left with the case where $r=l-1$, so that
\[
\la=(3l+1\df10,\ 6,4,3,1).
\]
For $l=2$ we use known decomposition numbers to show that $\la$ is not homogeneous, while for $l\gs3$ we can use \cref{deglem1}.

\case
\edef\hardcase{\caselabel}
Finally suppose that $\la^{-1}=\la$ and $\la^{-0}=(3l-1\df2)+3\al$, where $l(\al)\ls l$. We can assume $l\gs2$, since the case $l=1$ is included in \onepartcase. Now $\la^{-0}$ has two \sadbl $0$-nodes in each of rows $1,\dots,l$, and one in row $l+1$. We set $a_r=\la_r-\la^{-0}_r$ for $1\ls r\ls l$ and $a_{l+1}=\la_{l+1}+1$.

The assumption that $\la^{-1}=\la$ means that if $r\ls l$ with $a_r=0$, then $a_{r+1}=2$. Furthermore, if $a_l=0$, then $\al_l$ must be $0$.

If $a_1=\dots=a_l=2$, then $\la=(3l+1\df4)+3\al=(3l-2\df1)+3(\al+(1^l))$ or $(3l+1\df1)+3\al$, so $\la$ is \spc, contrary to assumption. So we assume there is some $r\ls l$ such that $a_r\ls1$.

If $r<s\ls l$ and $\al_r>\al_s$, then the \sadbl nodes of $\la^{-0}$ in row $r$ lie in a longer ladder than the \sadbl nodes in row $s$. So using \cref{dn0}, if $a_s>0$, then $a_r=2$. Similarly, if $a_{l+1}=2$ and $\al_r>0$, then $a_r=2$. In particular, if $\al_l>0$ then $a_{l+1}=1$ (since otherwise $a_1=\dots=a_l=2$, contrary to the last paragraph).

Combining these statements, we obtain $a_r=2$ for every $r$ for which $\al_r>\al_l$.
\subcase
\edef\arone{\subcaselabel}
Suppose $a_r=1$ for a single value $r$, and $a_s=2$ for all $s\neq r$. So
\[
\la=(3l+1\df1)+3\alpha-\delta_r,
\]
where $\delta_r$ is the composition which has a $1$ in position $r$ and $0$ elsewhere. The assumptions above mean that $\al_r=0$ and $a_{l+1}=2$. Now we use a dimension argument. Let $\mu=(3l+1\df4)+3\alpha$. Then $\mu^\reg=\la^\reg$ and  $\ddeg\la>\ddeg\mu$ by \cref{l419,deglem12} as $\al_r=0$. So $\la$ is not homogeneous.
\subcase
\edef\rpo{\subcaselabel}
Suppose that there exists $r$ such that $a_r=2$ and either $r=1$ or $a_{r-1}>0$.  Consider the partition $\la^{+1}$. We claim that this partition has a \sadbl $0$-node $(r,3(l-r+2+\al_r))$: this follows since $\la_r=3(l-r+2+\al_r)-2$, and $\la_{r-1}\gs3(l-r+2+\al_r)$ if $r\gs2$, and $\la_{r-2}\gs3(l-r+2+\al_r)+2$ if $r\gs3$.

If there is any $s\ls l$ for which $(a_s,a_{s+1})$ equals $(0,2)$, $(1,0)$ or $(1,1)$, then $\la^{+1}$ has a \srmbl $0$-node in either row $s$ or row $s+1$. Now the fact that $a_r=2$ and $a_s\ls1$ and the argument at the start of \hardcase{} mean that $\al_s\ls\al_r$, and therefore the \srmbl $0$-node in row $s$ or $s+1$ lies in a shorter ladder than the \sadbl $0$-node in row $r$. So by \cref{dn0} $\la^{+1}$ is not homogeneous, and hence neither is $\la$.

So we can assume that $a_s\gs1$ for all $s$, and that there are no two consecutive values of $s$ with $a_s=1$. As observed above, there is at least one $1\leq s\leq l$ with $a_s=1$. If there is exactly one $1\leq s\leq l+1$ for which $a_s=1$, then we are in case \arone, so assume instead that there are at least two values of $s$ for which $a_s=1$.

Now we define a sequence of partitions $\nu^{(1)},\nu^{(2)},\dots$ by
\[
\nu^{(1)}=\la^{+1},\qquad\nu^{(k+1)}=((\nu^{(k)})^{+0})^{+1}\text{ for }k\gs1.
\]
Observe that $\la^{+1}$ has no parts congruent to $1$ modulo $3$, and so by \cref{L110222} $\nu^{(k)}$ has no parts congruent to $1$ modulo $3$, for any $k$. Now we make the following observations, for $k\gs1$.
\clam
If $a_1=2$ or $a_{l+1}=2$ or the tuple $(a_1,\dots,a_{l+1})$ contains $k$ consecutive $2$s, then $\nu^{(k)}$ has a \sadbl $0$-node $(s,c)$ for which $3s+c\gs3(\al_l+l+k+1)$.
\prof
If $a_1=2$, then we can take $(s,c)=(1,3(l+k+\al_1))$. If $a_{l+1}=2$, then we can take $(s,c)=(l+k+1,1)$ by \cref{L110222_2} and induction on $k$.

Assume now that $k<r\ls l$ such that $a_{r-k}=1$ and $a_{r-k+1}=\dots=a_r=2$. Then $\al_{r-k}=\dots=\al_r=\al_l$, so we can take $(s,c)=(r,3(\al_l+l+k-r+1))$ by \cref{L110222_3}.
\malc

\clam
If there is $k\ls r\ls l$ such that $(a_{r-k+1},\dots,a_{r+1})=(1,2,2,\dots,2,1)$, then $\nu^{(k)}$ has a \srmbl $0$-node $(s,c)$ for which $3s+c=3(\al_l+l+k)$.
\prof
In this case $\al_{r-1}=\dots=\al_r=\al_l$. We can take $(s,c)=(r,3(\al_l+l-r+k))$ by \cref{L110222_3}, since
\[\nu^{(k)}_{r+1}\ls \nu^{(1)}_{r+1}+3(k-2)+2\ls 3(\al_l+l-r+k-2)+2.\]
\malc

As a result, \cref{dn0} shows that $\nu^{(k)}$ is not homogeneous for some $k$ (and hence $\la$ is not homogeneous) unless the tuple $(a_1,\dots,a_{l+1})$ has the form $(1,2^t,1,2^t,\dots,1,2^t,1)$ for some $t\gs1$. So assume this is the case. Now the fact that $a_1<2$ implies $\al_1=\al_l$. By \cref{C210322} we can then assume that $\al=(1^2)$ or $\varnothing$. In the first case $\la=(9,7)$, which can be ruled out looking at the known decomposition matrices. So we may assume that $\al=\varnothing$. If there are at least three values of $s$ for which $a_s=1$, then we use a degree argument: specifically, we claim that there is a partition $\mu$ such that $\mu\doms\la$, $\mu^\reg=\la^\reg$, $\mu_1=\la_1$ and $\ddeg\mu>\ddeg\la$, which will show that $\la$ is not homogeneous. We prove this by induction on the number of $1$s in the tuple $(a_1,\dots,a_{l+1})$. If there are at least four $1$s, then we can use \cref{l419} and induction, so assume $(a_1,\dots,a_{l+1})=(1,2^t,1,2^t,1)$. Then we use \cref{deglem9}.

We are left with the case where $(a_1,\dots,a_{l+1})=(1,2^{l-1},1)$. Suppose first that $l\gs6$. Then
\[
\nu^{(k)}=(6l-4\df3l+2,\ 3l,\ 3l-4\df2).
\]
Now by \cref{deglem10} $\nu^{(k)}$ is not homogeneous, and hence neither is $\la$.

So we are left with the cases $2\ls l\ls5$. If $l=2$ then $\la=(6,4)$, which can be dealt with using known decomposition matrices. For $3\ls l\ls5$, \cref{projcases} shows that $\la$ is not homogeneous.

\subcase
\edef\rind{\subcaselabel}
Suppose that $a_1<2$ and there is no $r\gs2$ for which $a_r=2$ and $a_{r-1}>0$. Suppose in addition that $l\gs3$ and there is $r\gs4$ such that $a_r\neq1$. Since $a_1<2$ we have $\al_1=\al_l$. Since $l\geq 3$ we can then assume from \cref{C210322} that $\al=\varnothing$. Now $\la$ is not homogeneous by \cref{0211deg}.
\subcase
If neither \rpo{} nor \rind{} applies, then the tuple $(a_1,\dots,a_{l+1})$ has the form $(1,1,\dots,1)$, $(0,2,1,1,\dots,1)$ or $(1,0,2,1,1,\dots,1)$. Again $\al_1=\al_l$, so by \cref{C210322} $\al=(1^2)$ or $\varnothing$. In the first case $\la\in\{(9,6),(8,7)\}$ (as $a_3=1$ in this case by assumption) which can both be ruled out looking at known decomposition matrices. So we may again assume that $\al=\varnothing$. These cases can be dealt with using \cref{deglem2,deglem4,deglem8}, except for the following small cases.
\begin{itemize}
\item
$(6,3)$, $(5,4)$ and $(8,7,3)$ can be shown to be not homogeneous using known decomposition matrices.
\item
$(11,10,6,3)$, $(14,13,9,6,3)$ and $(17,16,12,9,6,3)$ are not homogeneous by \cref{projcases}.
\end{itemize}

This completes the proof of \cref{T081021}.

\end{document}